\documentclass{aims}
\usepackage{amsmath}
  \usepackage{paralist}
  \usepackage{graphics} 
  \usepackage{epsfig} 
\usepackage{graphicx}  \usepackage{epstopdf} 
 \usepackage[colorlinks=true]{hyperref}
\hypersetup{urlcolor=blue, citecolor=red}

  \textheight=8.2 true in
   \textwidth=5.0 true in
    \topmargin 30pt
     \setcounter{page}{1}



\newtheorem{theorem}{Theorem}[section]
\newtheorem{corollary}[theorem]{Corollary}

\newtheorem{lemma}[theorem]{Lemma}
\newtheorem{proposition}[theorem]{Proposition}

\theoremstyle{definition}
\newtheorem{definition}[theorem]{Definition}
\newtheorem{assumption}[theorem]{Assumption}
\newtheorem{example}[theorem]{Example}
\newtheorem{remark}[theorem]{Remark}

\usepackage{amssymb}
\usepackage{mathtools}
\newcommand{\R}{\mathbb{R}}
\newcommand{\N}{\mathbb{N}}
\newcommand{\W}{\mathbb{W}}

\newcommand{\Bkal}{\mathcal{B}}
\newcommand{\Fkal}{\mathcal{F}}  
\newcommand{\Hkal}{\mathcal{H}}
\newcommand{\Lkal}{\mathcal{L}}
\newcommand{\Ckal}{\mathcal{C}}
\newcommand{\Ekal}{\mathcal{E}}
\newcommand{\Rkal}{\mathcal{R}}
\newcommand{\Gkal}{\mathcal{G}}
\DeclareMathOperator{\Dom}{Dom} 
\DeclareMathOperator{\tr}{tr} 
\DeclareMathOperator{\id}{id} 
\DeclareMathOperator{\dist}{dist} 
\DeclareMathOperator{\supp}{supp} 
\newcommand{\dd}{\,\textnormal{d}}
\newcommand{\embeds}{\hookrightarrow}
\newcommand{\blangle}{\bigl\langle}
\newcommand{\brangle}{\bigr\rangle}
\newcommand{\eps}{\varepsilon}

\usepackage[style=numeric,sortcites=true,abbreviate=true,giveninits=true,isbn=false,url=false,eprint=false]{biblatex}  
\addbibresource{bibneu.bib}  


\title[Quasilinear parabolic PDE{\tiny s} in the Bessel dual scale] 
      {Global-in-time solutions for quasilinear parabolic PDE{\scriptsize s}
        with mixed boundary conditions in the Bessel dual scale} 

\author[Fabian Hoppe, Hannes Meinlschmidt, and Ira Neitzel]{}

\subjclass{Primary: 35A01, 35K59; Secondary: 35R05, 35B65}
 \keywords{Quasilinear parabolic, global-in-time solution, mixed boundary
   conditions, Lipschitz domain, Bessel potential space}

 \email{fabian.hoppe@dlr.de}
 \email{meinlschmidt@math.fau.de}
 \email{neitzel@ins.uni-bonn.de}

\thanks{This research was carried out while F.H.\ was affiliated with University of Bonn and partially supported by the Deutsche Forschungsgemeinschaft (DFG, German Research Foundation)--Projektnummer~211504053--SFB~1060. I.N.\ gratefully acknowledges financial support by the Deutsche Forschungsgemeinschaft (DFG, German Research Foundation)--Projektnummer~211504053--SFB~1060.}

\thanks{$^*$Corresponding author: Hannes Meinlschmidt}

\begin{document}
\maketitle

\centerline{\scshape Fabian Hoppe}
\medskip
{\footnotesize
 \centerline{Deutsches Zentrum f\"ur Luft- und Raumfahrt DLR}
   \centerline{Institut f\"ur Softwaretechnologie, High Performance Computing,}
   \centerline{Linder H\"ohe, 51147 K\"oln, Germany}
} 

\medskip

\centerline{\scshape Hannes Meinlschmidt}
\medskip
{\footnotesize
 \centerline{Department of Data Science (DDS),}
   \centerline{Chair in Dynamics, Control and Numerics (Alexander von Humboldt-Professorship),}
   \centerline{Friedrich-Alexander-Universit\"at Erlangen-N\"urnberg,}
   \centerline{Cauerstra{\ss}e 11, 91058 Erlangen, Germany}
}

\medskip

\centerline{\scshape Ira Neitzel}
\medskip
{\footnotesize
 \centerline{Institut f\"ur Numerische Simulation,}
   \centerline{Rheinische Friedrich-Wilhelms-Universit\"at Bonn,}
   \centerline{Friedrich-Hirzebruch-Allee 7, 53115 Bonn, Germany}
}

\bigskip

 \centerline{(Communicated by the associate editor name)}

\begin{abstract}
  We prove existence and uniqueness of global-in-time solutions in the
  $W^{-1,p}_D$-$W^{1,p}_D$-setting for abstract quasilinear parabolic PDEs with nonsmooth data
  and mixed boundary conditions, including a nonlinear source term with at most linear
  growth. Subsequently, we use a bootstrapping argument to achieve improved regularity of these
  global-in-time solutions within the functional-analytic setting of the interpolation scale of
  Bessel-potential dual spaces $H^{\theta-1,p}_D = [W^{-1,p}_D,L^p]_\theta$ with $\theta \in [0,1]$
  for the abstract equation under suitable additional assumptions. This is done by means of new
  nonautonomous maximal parabolic regularity results for nonautonomous differential operators
  operators with H\"older-continuous coefficients on Bessel-potential spaces. The upper limit
  for $\theta$ is derived from the maximum degree of H\"older continuity for solutions to an
  elliptic mixed boundary value problem in $L^p$.
\end{abstract}

\medskip
Received xxxx 20xx; revised xxxx 20xx; early access xxxx 20xx.
\medskip

\section{Introduction}

This work is concerned with global-in-time existence of solutions $u \colon (0,T) \times \Omega \to \R$ to quasilinear
parabolic equations of type
\begin{equation}\label{eq::concrete}
\left.  \begin{aligned} \partial_t u - \operatorname{div}(\xi(u)\mu
      \nabla u) + u &= \Fkal_\Omega(u) \qquad& &\text{on } (0,T) \times
      \Omega, \\ 
      \nu_{\partial \Omega} \cdot \xi(u) \mu \nabla u + \alpha  u &=
      \Fkal_\Gamma(u) & &\text{on } (0,T) \times \Gamma_N, \\ 
      u &= 0 &&\text{on } (0,T) \times \Gamma_D, \\
      u(0) &= u_0 &&\text{on } \Omega,
    \end{aligned} \qquad \right\}
\end{equation}
and their regularity. We will interpret~\eqref{eq::concrete} as an abstract evolution equation
in a scale $X_\theta$ of function spaces and work within a maximal parabolic regularity
framework. The scale will be $X_\theta \coloneqq [W^{-1,p}_D(\Omega),L^p(\Omega)]_\theta$ with
$p>d$. (All objects and notions will be properly introduced below.)  The defining feature
of~\eqref{eq::concrete} is the coefficient $\xi(u)$ in the divergence operator. The problem
further includes mixed boundary conditions on the disjoint boundary parts $\Gamma_D$ and
$\Gamma_N$ with $\Gamma_D \cup \Gamma_N = \partial\Omega$ where we allow for inhomogeneous
Robin/Neumann data. The setting for $\Omega \subset \R^d$, where $d\in\{2,3\}$, will be that of
a bounded weak Lipschitz domain compatible with the mixed boundary conditions (regular in the
sense of Gr\"oger). The nonlinear functions $\Fkal_\Gamma$ and $\Fkal_\Omega$ have to satisfy a
``local'' Lipschitz condition and we can afford up to linear growth; this is a classical
assumption when aiming for global-in-time results. (We do not assume monotonicity for this
work.)  Finally, we note that there will be no further explicit smoothness assumption on the
coefficient matrix $\mu$, we only assume it to be bounded. We will however require that the
weak divergence operator $-\nabla \cdot \mu\nabla + 1$ admits optimal elliptic regularity in
$W^{-1,p}_D(\Omega)$.

Our main result is that if the data in the abstract formulation
of~\eqref{eq::concrete} yields a well-defined problem in
$X_\theta \coloneqq [W^{-1,p}_D(\Omega),L^p(\Omega)]_\theta$ with
$p>d$, then this problem admits a unique global-in-time solution
$u \in W^{1,s}(0,T;\Dom_{X_\theta}(-\nabla\cdot\mu\nabla+1))\mathrel\cap
L^s(0,T;X_\theta)$ for suitably large $s$. This is
true for all $\theta \in [0,\bar\theta]$ for some
$\bar\theta \in [1-d/p,1]$; thus, we have global-in-time solutions for
a whole scale of function spaces at our disposal which allows for a
flexible treatment of a wide range of applications. The upper bound
$\bar\theta$ depends on the degree of H\"older regularity admitted by
the solutions to the elliptic problem with mixed boundary conditions
associated to $-\nabla\cdot\mu\nabla +1$ in $L^p(\Omega)$. The
reasoning is based on nonautonomous maximal parabolic regularity with
constant domains for
$-\nabla\cdot\eta\mu\nabla$ in $X_\theta$ when the coefficient $\eta>0$
is H\"older continuous in space of order $> \theta$. This is a second
main result. The former property is used to bootstrap the unique
global-in-time solution which exists for $X_0 = W^{-1,p}_D(\Omega)$;
this was established in previous work~(\cite{Meinlschmidt2016}) and is
revisited and improved below.

\emph{Context.} Quasilinear parabolic PDEs arise in several
real-world applications, for instance in the immediate example of heat
conduction when conductivity is tem\-pe\-ra\-ture-dependent, but also
in much more complicated problems such as semiconductor
physics~\cite{Selberherr1984} or liquid crystal
growth~\cite{HieberPruess2016}, see
also~\cite[Introduction]{Amann1995} and the references there. We also
mention the classical monography~\cite{Ladyzhenskaya1968} for early
work in this context. More recently, several
contributions~\cite{HieberRehberg2008,DisserRehberg2019,%
  HieberNesensohnPruessSchade2016,Meinlschmidt2017_2,%
  Meinlschmidt2017_1,Meinlschmidt2016,Bonifacius2018,Dintelmann2009}
have addressed a rather rough geometric setting for such problems with
respect to the domain, boundary conditions, and coefficient functions,
similar to the one listed above.  Such a nonsmooth geometric setting
as described before is often motivated by a realistic and appropriate
model of industrial applications where one has e.g.\ nonsmooth
workpieces made of different materials. The regularity concept of
Gr\"oger has proven to be a sort of umbrella framework in this regard,
see~\cite{Groeger1989,Disser2015} and the references therein. The
approach for~\eqref{eq::concrete} then goes via the concept of
(nonautonomous) \emph{maximal parabolic regularity}, which has turned
out to be quite flexible and useful in this context because it allows
to ``modularize'' the functional-analytic treatment of abstract
evolution equations into several building blocks. We refer to the
fundamental recent works of Amann~\cite{Amann2005_2} and Pr\"uss and
collaborators~\cite{Pruess2002,LeCronePruessWilke2014,KoehnePruessWilke2010}
in this regard. This is particularly useful in the nonsmooth
geometric setting, because many convenient tools from e.g.\ elliptic
regularity theory or (function space) interpolation theory are not
available at all or at least require further justification, and a
modularized ansatz allows to make use of new developments in every
component. In fact, these insights have sparked quite some research
interest in the respective directions and, indeed, there has been
tremendous development here in the recent years, even for (much) more
general geometric settings; we exemplary refer
to~\cite{BrewsterMitreaMitreaMitrea2014,BechtelEgertHaller-Dintelmann2020,Bechtel2019,Disser2015,Meinlschmidt2021,Dintelmann2016}.

However, the question of \emph{global-in-time} solutions to problems
of type~\eqref{eq::concrete} remains ubiquitous, at least as far as it
can be expected from the respective physical model. In the nonsmooth
geometric setup, existence of global-in-time solutions for a less
general instance of the abstract version of~\eqref{eq::concrete} was
established for $X_0=W^{-1,p}_D(\Omega)$ for $p>d$ in~\cite{Meinlschmidt2016}; the
solution is then in $W^{1,s}(0,T;W^{-1,p}_D(\Omega))\cap
L^s(0,T;W^{1,p}_D(\Omega))$ with $\frac1s < \frac12 - \frac{d}{2p}$. The
authors rely on uniform H\"older estimates for nonautonomous linear
parabolic equations established in the same paper and nonautonomous maximal
parabolic regularity; we will revisit and
improve upon the result in Section~\ref{subsec::exonW1p}.
The insight here was that the elliptic differential operator depends
on the coefficient perturbation $\xi(u)$ in a well suited way in the
topology of uniformly continuous functions on $\overline Q$ with
$Q \coloneqq (0,T) \times \Omega$, and this combines very well with
the result on H\"older continuity on $\overline Q$, which is uniform
in the respective coefficient functions, within a Schauder fixed point
argument. Following and extending this train of thought, the authors
of~\cite{Bonifacius2018} were able to show that the equation arising
from inserting the given global-in-time solution in the nonlinear
functions, and re-interpreting as a linear nonautonomous evolution
equation, is well-posed in the Bessel-potential spaces
$X = H^{-\zeta,p}_D(\Omega)$ where $\zeta<1$, but close to $1$. This
is based on the observation that the H\"older continuity of the given
global solution is in fact sufficient to verify nonautonomous maximal
parabolic regularity for the differential operator in this space and
allows to bootstrap the regularity for $u$, if the respective data in
the problem is suitable. In this work, we take this procedure even
further: Since the given global solution is now more regular, its
degree of H\"older continuity, at least in space, has now also
increased, and indeed, we are able to show that this leads to another
bootstrap-improvement of regularity for $u$. This bootstrapping will
work as far as the given data admits. We will explain this in a bit
more detail below. Before, let us mention that there is another recent
result on global-in-time existence in~\cite{Casas2018} to a problem
similar to~\eqref{eq::concrete}, but in a more regular setting, that
is, for pure homogeneous Dirichlet boundary conditions and a
$C^{1,1}$-domain. We comment on the relation of our work
to~\cite{Casas2018} e.g.\ in Remark~\ref{rem::optimal-case} below.

\emph{Motivation.} In addition to a contribution to the framework
of analysis based on maximal regularity techniques for interesting
real-world applications as mentioned before, the original motivation
of the present work comes from optimal control. In fact, several of
the recent results mentioned before were derived in an optimal control
context; this is also true for~\cite{Casas2018}. Indeed, the analysis
of a PDE-constrained optimization problem is usually based on detailed
regularity and stability results for the underlying PDE, in particular
also on global-in-time existence and uniqueness in the first
place. Obtaining the necessary results can be challenging especially
in case of so-called \emph{state constraints}, that is, pointwise
upper and lower bounds on the solution $u$ (the \emph{state}) in every
$(t,x) \in \overline Q$. The difficulty arises here because the amount
of regularity for the state required to deal with this type of
constraint is relatively high; for instance, rigorously establishing
first-order necessary optimality conditions in this context via the
standard technique requires \emph{continuous} solutions of the state
equation, see e.g.~\cite{Casas1993}. For quasilinear parabolic
problems such as~\eqref{eq::concrete} this regularity is guaranteed
within the setting of~\cite{Meinlschmidt2016,Bonifacius2018}, and one
can proceed with the ususal reasoning,
see~\cite{HoppeNeitzel2022,Meinlschmidt2017_2}. In fact, in this
case, continuity of the solution is a crucial aspect of the state
equation analysis in the first place as explained above.
Nevertheless, the derivation of second-order sufficient conditions for
the same type of problem usually requires more regularity for
conceptual reasons, at least for the
linearized state equation; this is a reason why the authors
of~\cite{Casas2008} consider a more regular geometric
setting. Moreover, for even more demanding types of constraints, such
as pointwise bounds on the \emph{gradient} of the state, one requires
even more regularity for the solution to~\eqref{eq::concrete} such as $\nabla u \in C(\overline{Q},\R^d)$  (Such a
constraint would be used e.g.\ to force material stresses to lie
within certain bounds.) In this paper, we thus strive for the optimal
regularity obtainable in dependence on the data within the scale
$X_\theta$ of function spaces.
 
\emph{Outline and organization.} As already mentioned above, this
work contains several new contributions which we collect in the
following milestones: First, we revisit and reprove the global-in-time
existence result for $X_0 = W^{-1,p}_D(\Omega)$
in~\cite[Theorem~5.3]{Meinlschmidt2016} to also incorporate up to
linear growth in the nonlinearity. This is
Theorem~\ref{thm::exonW1p}. Second, we extend the nonautonomous
maximal parabolic regularity result for
$-\nabla\cdot\eta\mu\nabla + 1$ obtained in~\cite{Bonifacius2018} for
$X = H^{-\zeta,p}_D(\Omega)$ to the whole scale $X_\theta$ using
H\"older-continuity of the scalar coefficient perturbation $\eta$ of
degree $> \theta$, see Theorem~\ref{thm::mpronxtheta}. Our proof here
is also much less involved than the one in~\cite{Bonifacius2018} since
we establish an invariance property for the domain of the elliptic
operators with respect to $\eta$. This result then allows to, third,
bootstrap the global-in-time solution $u$ from the ambient space $X_0$
to $X_\theta$ with the associated regularity
$u \in W^{1,s}(0,T;\Dom_{X_\theta}(-\nabla\cdot\mu\nabla+1))\cap
L^s(0,T;X_\theta)$, up to a certain threshold $\bar\theta$. We have
split this result into two parts, Theorems~\ref{thm::eqinXtheta}
and~\ref{thm::regforeq}. If the solutions to elliptic problem with
mixed boundary conditions associated to $-\nabla\cdot\mu\nabla + 1$ in
$L^p(\Omega)$ are at least Lipschitz-continuous, then our procedure
works up to $X_1 = L^p(\Omega)$ indeed. (A particular case would be
that of optimal Sobolev regularity
$W^{2,p}(\Omega)\cap W^{1,p}_D(\Omega)$ for the elliptic problem in
$L^p(\Omega)$.) In this sense, our results close the gap between
existence and uniqueness of global-in-time solutions for the
$X_0 = W^{-1,p}_D(\Omega)$- and the $X_1 = L^p(\Omega)$ setting as
obtained in~\cite{Meinlschmidt2016} and~\cite{Casas2018},
respectively.

To reiterate, our overall strategy to obtain global-in-time solutions
to~\eqref{eq::concrete} in $X_\theta$ can be summarized as follows: We
obtain a global-in-time solution for $\theta = 0$, so in
$X_0 = W^{-1,p}_D(\Omega)$. Re-inserting this solution into the
nonlinear functions, we see that the solution satisfies a linear
nonautonomous problem whose coefficient $\xi(u)$ admits a certain
degree of H\"older-continuity. This degree of H\"older-continuity
$\vartheta$ allows to invoke nonautonomous maximal parabolic
regularity in $X_\theta$ for $\theta < \vartheta$, from which we
obtain that the solution $u$ is, in truth, more regular. But,
repeating the argument, we see that improved regularity of $u$ has
also improved the degree of H\"older-continuity to
$\vartheta^+ > \vartheta$, and we obtain even better regularity for
$u$ than before.  This procedure works iteratively up to some maximal
choice $\bar\theta$ of $\theta$ that ultimately depends on the domain,
the boundary conditions and $\mu$, via the maximal degree of H\"older
continuity for solutions to the elliptic problem with mixed boundary
conditions associated to $-\nabla\cdot\mu\nabla + 1$ in $L^p(\Omega)$.

The paper is organized as follows: In Section~\ref{subsec::subsec21},
we introduce notation and assumptions and collect several auxiliary
concepts and results used throughout the paper. We also formulate
equation~\eqref{eq::concrete} in an appropriate function space
setting, see Section~\ref{sec:probl-stat-assumpt}. In
Section~\ref{subsec::exonW1p}, we provide the global-in-time existence
and uniqueness result for $X_0 = W^{-1,p}_D(\Omega)$ in
Theorem~\ref{thm::exonW1p}. Moreover, we collect the improved
regularity results on $H^{-\zeta,p}_D(\Omega)$
from~\cite{Bonifacius2018} and use this to give a result for
$X = L^{p/2}(\Omega)$, too.  The main step towards the overall main
result is then obtained in Section~\ref{sec::quasilinonXtheta}, where
we establish nonautonomous maximal parabolic regularity for
$-\nabla\cdot\eta\mu\nabla+1$ in $X_\theta$ when $\eta$ is
Hölder-continuous in space of degree $> \theta$; this is
Theorem~\ref{thm::mpronxtheta}. The fundamental achievement here is
that the domains of the foregoing operators in $X_\theta$ coincide
with that of $-\nabla\cdot\mu\nabla + 1$, so, they are invariant under
$\eta$. This insight is based on bilinear interpolation as in
Appendix~\ref{sec::RehbergMeinlschmidt}. Finally, in
Section~\ref{subsec::eqonXtheta} we tackle global-in-time existence
for~\eqref{eq::concrete} in $X_\theta$ by employing the nonautonomous
maximal regularity result from
Section~\ref{sec::quasilinonXtheta}. More precisely, we first collect
some results on interpolation and domains of fractional powers of
$-\nabla\cdot\mu\nabla + 1$ in
Section~\ref{subsec::interpolation-spaces}; these are required for the
internal workings of the bootstrapping machinery. A first version of
the main result, Theorem~\ref{thm::eqinXtheta}, is then provided in
Section~\ref{subsec::simplecase}, to make the argument more
transparent. The second and more comprehensive version is given
afterwards in Section~\ref{subsec::generalcase} via
Theorem~\ref{thm::regforeq}. A discussion of particular cases where we
can determine the upper threshold $\bar\theta$ for $\theta$ more or
less explicitly can be found in Appendix~\ref{sec:hold-regul-ellipt}.

\section{Preliminaries}
\label{subsec::subsec21}

In this section we introduce several notions and definitions. We
further collect several auxiliary results which will be used
throughout the paper. Finally, once the necessary groundwork has been
done, we state the standing assumptions for this work as well as the
abstract quasilinear parabolic PDE problem in more detail.

\subsection{Notation and conventions}

We fix the given time interval $I = (0,T)$ with $T > 0$. All vector
spaces considered are \emph{real} ones. The domain of a closed
operator $A\colon X \to Y$ between Banach spaces $X,Y$ is denoted by
$\Dom_X(A)$; in general, we equip it with the graph norm. If $A$ is
bijective, then $x \mapsto \|Ax\|_Y$ is equivalent to the graph
norm. If $A$ is a closed operator in $X$ and $Z \subseteq X$, then we
denote the domain of the corestriction of $A$ to $Z$ by
$\Dom_Z(A) = \{ x \in X \colon Ax \in Z\}$. Moreover, by $\Lkal(X,Y)$
we refer to the space of bounded linear operators $X \to Y$ with the
operator norm. By $\embeds_c$ and $\embeds_d$ we denote compact and
dense embedding. We use standard notation and definitions for
classical Lebesgue- and H\"older function spaces, also for
Bochner-Lebesgue $L^s(J,X)$ and Bochner-Sobolev $W^{1,s}(J,X)$ spaces
on an interval, and for real-$(\cdot,\cdot)_{\theta,p}$ and complex
$[\,\cdot,\cdot]_\theta$ interpolation spaces. Usually, we will omit
the underlying spatial domain $\Omega$ in the associated function
spaces, if no confusion is likely and no particular point is to be
made. Finally, given an integrability exponent $p \in [1,\infty]$ we
denote by $p' \in [1,\infty]$ the conjugate exponent defined by
$1/p + 1/p' = 1$.

\subsection{Domain and function spaces}

We next introduce the assumptions on the geometry of the underlying
domain $\Omega$ and its boundary parts $\Gamma_N$ and $\Gamma_D$,
followed by some more function spaces for which this geometry is
(partially) important. Here is the standing assumption on the domain:

\begin{assumption}[Geometry]\label{ass::domain-geometry}
  Let $\Omega \subset \R^d$, where $d \in \{2,3\}$, be a bounded
  domain with boundary $\partial \Omega$; let
  $\Gamma_N \subseteq \partial \Omega$ be relatively open, denoting
  the designated \textbf{N}eumann boundary part, and let
  $\Gamma_D = \partial \Omega \setminus \Gamma_N$ denote the
  \textbf{D}irichlet boundary part. We assume that
  $\Omega \cup \Gamma_N$ is regular in the sense of Gröger, that is, a
  weak Lipschitz domain (``Lipschitz manifold'') with a compatibility condition for
  $\overline{\Gamma_N} \cap \Gamma_D$,
  see~\cite{Groeger1989}. Further, we require the additional property
  that every Lipschitz chart in the definition of regular in the sense of
  Gr\"oger can be chosen to be volume-preserving.
\end{assumption}

We will not need the precise technical formulation for the notions in
Assumption~\ref{ass::domain-geometry} which is why we refer
to~\cite{Groeger1989}; see also~\cite{Dintelmann2009_2} for a 3D
characterization. However, a few comments are in order:
\begin{enumerate}[(1)]\item
  Note that we do \emph{not} assume $\partial\Omega$ to (locally) be
  the graph of a Lipschitz function as it is required in the case of a
  \emph{strong} Lipschitz domain,
  cf.~\cite[Definition~1.2.1.1]{Grisvard1985}. Any such  strong
  Lipschitz domain will be a weak Lipschitz domain with
  volume-preserving charts. In particular, it will immediately be regular in the
  sense of Gr\"oger if either $\Gamma_N = \emptyset$ or
  $\Gamma_N = \partial \Omega$,
  see~\cite[Remark~3.3]{Dintelmann2009}. The classical example of a
  weak Lipschitz domain which is not also a strong Lipschitz domain is
  that of a pair of crossing beams in
  3D~\cite[Section~7.3]{Dintelmann2009}.
\item The assumption that the Lipschitz charts be volume-preserving is
  posed \emph{only} in order to be able to utilize results
  from~\cite{Meinlschmidt2016} where this property was used in order
  to avoid technical particularities in a localization/transformation
  procedure. Thus, one could get rid of this particular assumption by
  re-visiting and improving upon the corresponding results
  in~\cite{Meinlschmidt2016}.
\end{enumerate}


\subsubsection*{Function spaces}

We next turn to some more function spaces which require saying a bit
more in their definition and for which the geometry of $\Omega$ is
important. First, for $s \in \R$ and $q \in (1,\infty)$, let
$H^{s,q}(\R^d)$ be the classical Bessel potential spaces. In order to
incorporate a type of zero trace property into the function spaces, we
note that for $s \in (1/q,1+1/q)$, there exists a continuous linear
\emph{trace operator}
$\tr_{D} \colon H^{s,q}(\R^d) \to L^q(\Gamma_D)$,
see~\cite[Theorems~VI.1\&VII.1]{JonssonWallin1984}. Here and in all
what follows, the Lebesgue space on a subset of the boundary
$\partial\Omega$ is equipped with the $(d-1)$-dimensional Hausdorff
measure $\Hkal_{d-1}$. For such $s$, define the Bessel potential
spaces on $\R^d$ incorporating a homogeneous Dirichlet condition on
$\Gamma_D$ by
\[
  H^{s,q}_D(\R^d) \coloneqq  \Bigl\{ f \in H^{s,q}(\R^d)\colon \tr_{D} f
  \equiv 0 \Bigr\}.
\]
Since the trace operator is continuous on $H^{s,q}(\R^d)$, its kernel
$H^{s,q}_D(\R^d)$ is a closed subspace and thus a
Banach space.

The corresponding function spaces on $\Omega$ for $s \geq 0$ and
$s \in (1/q,1+1/q)$, respectively, are now defined by restriction:
\begin{align*}
  H^{s,q}(\Omega) \coloneqq  \Bigl\{ f|_\Omega\colon f \in
  H^{s,q}(\R^d)\Bigr\} \qquad \text{and} \qquad H^{s,q}_D(\Omega)
  \coloneqq  \Bigl\{ f|_\Omega\colon f \in H^{s,q}_D(\R^d)\Bigr\}, 
\end{align*}
and equipped with the canonical quotient norms. This makes them Banach
spaces. We moreover set
$H^{-s,q}(\Omega) \coloneqq (H^{s,q'}(\Omega))^*$ and
$H^{-s,q}_D(\Omega) \coloneqq (H^{s,q'}_D(\Omega))^*$. Then all the
spaces introduced so far are reflexive. Note also that
$H^{0,q}(\Omega) = L^q(\Omega)$. The quotient spaces so far are very
abstract. It will however turn out that they can be related to the
usual, intrinsically defined Sobolev spaces due to
Assumption~\ref{ass::domain-geometry}.

Indeed, for $k \in \N$ and $p \in [1,\infty]$, we define the classical
Sobolev spaces $W^{k,q}(\Omega)$ in the canonical way, that is, the
set of $L^q(\Omega)$ functions whose distributional derivatives up to
order $k$ are regular and represented by $L^q(\Omega)$ functions, with
the $\ell^q$-type norm. Let further
\begin{equation*}
  C_D^\infty(\R^d) \coloneqq \Bigl\{ f \in C_c^\infty(\R^d) \colon
  \dist(\supp f,\Gamma_D)>0 \Bigr\}, \qquad C_D^\infty(\Omega)
  \coloneqq C_D^\infty(\R^d)|_\Omega
\end{equation*}
and set
$W^{1,q}_D(\Omega) \coloneqq
\overline{C_D^\infty(\Omega)}^{W^{1,q}(\Omega)}$ and
$W^{-1,q}_D(\Omega) \coloneqq (W^{1,q'}_D(\Omega))^*$. Then, in our
geometric setting as in Assumption~\ref{ass::domain-geometry}---and,
in fact, also quite far beyond that---, we have
$W^{1,q}_D(\Omega) = H^{1,q}_D(\Omega)$ for $q \in (1,\infty)$, up to
equivalent norms since there is a suitable extension operator for
$W^{1,q}_D(\Omega)$ to
$W^{1,q}(\R^d)$~(\cite[Proposition~B.3]{Bechtel2019}).

By construction, we have the usual Sobolev embeddings within the
$H^{s,q}(\Omega)$ family at hand since these transfer immediately from
$H^{s,q}(\R^d)$ to the quotient type spaces. This includes
Rellich-Kondrachov type compactness results. In particular, they also
hold for $W^{1,q}_D(\Omega)$ by virtue of the last paragraph, and also
for embeddings into the H\"older spaces which we state for further
use.
\begin{lemma}[Sobolev]
  \label{lem::Sobolev-embeddings}
  Let $q \in (1,\infty)$ and $s > \frac{d}q$. Then $ H^{s,q}(\Omega)
  \embeds C^{0,s-\frac{d}q}(\overline\Omega)$. 
\end{lemma}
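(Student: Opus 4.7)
The plan is to reduce the assertion to the classical Sobolev embedding on the whole space, namely
\[
  H^{s,q}(\R^d) \embeds C^{0,s-d/q}(\R^d),
\]
and then exploit the fact that $H^{s,q}(\Omega)$ is defined as a restriction space with the quotient norm. No special properties of $\Omega$ (beyond being a bounded subset of $\R^d$) enter the argument at all, so Assumption~\ref{ass::domain-geometry} plays no role here.

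For the $\R^d$-embedding I would invoke a standard reference: writing $H^{s,q}(\R^d) = (1-\Delta)^{-s/2}L^q(\R^d)$ via the Bessel potential, this embedding follows from the pointwise decay of the Bessel kernel together with Morrey's inequality; alternatively, one can interpolate real- or complex-analytically between the integer cases $W^{k,q}(\R^d) \embeds C^{0,k-d/q}(\R^d)$ for $\lfloor s\rfloor$ and $\lceil s\rceil$, both of which are classical. This yields a constant $C = C(d,q,s)$ such that $\|f\|_{C^{0,s-d/q}(\R^d)} \leq C \|f\|_{H^{s,q}(\R^d)}$ for all $f \in H^{s,q}(\R^d)$.

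Now, given $u \in H^{s,q}(\Omega)$ and any $\eps > 0$, choose an extension $f \in H^{s,q}(\R^d)$ with $f|_\Omega = u$ and $\|f\|_{H^{s,q}(\R^d)} \leq \|u\|_{H^{s,q}(\Omega)} + \eps$. Then $f \in C^{0,s-d/q}(\R^d)$, and since $\overline\Omega \subset \R^d$, the restriction $u = f|_{\overline\Omega}$ satisfies
\[
  \sup_{x \in \overline\Omega} |u(x)| + \sup_{\substack{x,y \in \overline\Omega\\ x \neq y}} \frac{|u(x)-u(y)|}{|x-y|^{s-d/q}} \leq \|f\|_{C^{0,s-d/q}(\R^d)} \leq C\bigl(\|u\|_{H^{s,q}(\Omega)} + \eps\bigr).
\]
Letting $\eps \to 0$ gives the claimed norm estimate, and continuity of the embedding follows.

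Since no step involves anything beyond classical $\R^d$-theory and the defining quotient structure of $H^{s,q}(\Omega)$, there is no real obstacle; the only point deserving care is that the H\"older seminorm on $\overline\Omega$ is controlled simply by the Euclidean distance, which is automatic because $\overline\Omega$ sits inside $\R^d$ with the same metric — no rectifiable-path or $(\eps,\delta)$-chain argument is required here.
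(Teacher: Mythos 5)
Your argument is correct and is exactly the paper's (implicit) proof: the paper states only that Sobolev embeddings ``transfer immediately from $H^{s,q}(\R^d)$ to the quotient type spaces,'' which is precisely your extension--restriction argument using the quotient norm and the classical full-space embedding. The only caveat, inherited from the statement itself rather than introduced by you, is that the classical embedding requires $s-\frac{d}{q}<1$ for the H\"older exponent to be meaningful, which holds in all of the paper's applications.
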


The following interpolation result based
on main results in~\cite{Bechtel2019} and the duality principle for
complex interpolation~\cite[Corollary~4.5.2]{Bergh1976} will be very
useful. It shows that the Bessel scale on $\Omega$ is indeed an
interpolation scale. We point out that the present
Assumption~\ref{ass::domain-geometry} on the geometry of $\Omega$ implies the
assumptions in~\cite{Bechtel2019}, see the introduction there,
cf.~also~\cite[Appendix~A]{Bonifacius2018}.

\begin{lemma}[{\cite[Theorems~1.1\&1.3]{Bechtel2019}}]
  \label{lem::interpol-Bessel}
  Let $q \in (1,\infty)$ and $\theta \in (0,1)$. Then we have
  \begin{equation*}
    \Bigl[L^q(\Omega),W^{\pm 1,q}_D(\Omega)\Bigr]_\theta =
    \begin{cases}
      H^{\pm \theta,q}_D(\Omega) & \text{if}~\theta > \frac1q, \\[0.5em]
      H^{\pm \theta,q}(\Omega) & \text{if}~\theta<\frac1q
    \end{cases}
  \end{equation*}
  up to equivalent norms.  Moreover,
  \begin{equation*}
    \Bigl[W^{-1,q}_D(\Omega),W^{1,q}_D(\Omega)\Bigr]_{\frac12} = L^q(\Omega)
  \end{equation*}
  again up to equivalent norms.
\end{lemma}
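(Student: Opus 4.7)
The plan is to treat the positive-index case as an immediate consequence of Bechtel's Theorem~1.1, to derive the negative-index case by the duality principle for complex interpolation (since by definition $W^{-1,q}_D(\Omega) = (W^{1,q'}_D(\Omega))^*$ and analogously for the $H^{-s,q}$ scales), and to appeal to Bechtel's Theorem~1.3 for the concluding ``moreover'' identity, with a reiteration argument as a fallback.

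First I would confirm that Assumption~\ref{ass::domain-geometry} supplies the geometric hypotheses required in~\cite{Bechtel2019}, namely $\Omega \cup \Gamma_N$ being a weak Lipschitz domain and Gr\"oger-regular with the compatibility condition on $\overline{\Gamma_N}\cap\Gamma_D$. Then \cite[Thm.~1.1]{Bechtel2019} directly identifies $[L^q, W^{1,q}_D]_\theta$ with $H^{\theta,q}_D$ for $\theta > 1/q$ and with $H^{\theta,q}$ for $\theta < 1/q$, the case split occurring precisely at the Sobolev trace threshold. For the negative-index case, I would invoke \cite[Cor.~4.5.2]{Bergh1976}: reflexivity of all spaces involved and density of $W^{1,q'}_D(\Omega)$ in $L^{q'}(\Omega)$ give the duality identity
\[
  \bigl[L^q(\Omega), W^{-1,q}_D(\Omega)\bigr]_\theta = \bigl[L^{q'}(\Omega), W^{1,q'}_D(\Omega)\bigr]_\theta^*.
\]
Applying the positive-index case to the right-hand side with exponent $q'$ and taking duals via the defining relations $(H^{s,q'}_D)^* = H^{-s,q}_D$ and $(H^{s,q'})^* = H^{-s,q}$ then yields the claimed identification.

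The main obstacle I anticipate is the alignment of the threshold after dualization: a direct application of Bechtel's theorem with exponent $q'$ produces the split at $\theta = 1/q'$, whereas the lemma advertises $\theta = 1/q$ for both signs. Reconciling these in the ``exceptional'' range between $1/q$ and $1/q'$---and checking that the conventions of~\cite{Bechtel2019} are compatible with how $H^{-s,q}_D$ is defined in the present paper---is the main technical point to nail down. For the concluding identity $[W^{-1,q}_D, W^{1,q}_D]_{1/2} = L^q$, I would either cite~\cite[Thm.~1.3]{Bechtel2019} directly or attempt a reiteration argument: use the first two parts at symmetric parameters $\pm\alpha$ near zero to identify $[L^q, W^{\pm 1,q}_D]_\alpha$ with the corresponding $H^{\pm\alpha,q}$-type space, and collapse at $\theta = 1/2$ onto $H^{0,q} = L^q$; ensuring tightness of reiteration across the boundary-condition-incorporating Bessel scale would be the secondary technical hurdle.
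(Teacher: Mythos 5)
The paper offers no proof of its own here: the lemma is quoted verbatim from \cite{Bechtel2019}, with the sentence preceding it indicating exactly the two ingredients you name --- the main interpolation theorems of \cite{Bechtel2019} (whose geometric hypotheses are implied by Assumption~\ref{ass::domain-geometry}) and the duality principle for complex interpolation from \cite[Cor.~4.5.2]{Bergh1976}. So your route is the paper's route, and the only substantive content to review is the two points you leave open.

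On the threshold after dualization: applying the positive case with exponent $q'$ and dualizing places the split at $\theta = 1/q'$ rather than $1/q$, as you observe. For $q \geq 2$ --- the only regime in which the paper ever invokes the lemma, since later $q = p > d \geq 2$ --- the discrepancy lives on the band $\theta \in (1/q, 1/q')$, and there it is vacuous: since $\theta < 1/q'$, the vanishing-trace condition defining $H^{\theta,q'}_D$ is empty (this is precisely why the positive case of the lemma returns the space \emph{without} the subscript $D$ below the trace threshold), so $H^{\theta,q'}_D = H^{\theta,q'}$ and hence $H^{-\theta,q}_D = H^{-\theta,q}$; both labels name the same space. For $q < 2$ the band is $(1/q',1/q)$ and bare duality genuinely produces $(H^{\theta,q'}_D)^*$, which is a quotient of, not equal to, $H^{-\theta,q}$; there one must quote the dual-scale theorem of \cite{Bechtel2019} directly (whose threshold in this parametrization sits at $1/q'$) rather than relabel --- but this case is never used in the paper, and indeed Lemma~\ref{lem::trace} is consistent with the $1/q'$-threshold. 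The second point is more serious as a matter of method: your reiteration fallback for $[W^{-1,q}_D,W^{1,q}_D]_{1/2} = L^q$ does not go through. The spaces $[L^q,W^{-1,q}_D]_\alpha$ and $[L^q,W^{1,q}_D]_\alpha$ are intermediate spaces of two \emph{different} couples, and the reiteration theorem cannot glue the two half-scales across $L^q$; one would need a Wolff-type gluing argument, which is essentially what \cite[Thm.~1.3]{Bechtel2019} encapsulates. So the ``moreover'' identity must be taken from that theorem as a genuine input --- note that the paper's Corollary~\ref{cor::interpol-cont-embed} uses it as the \emph{premise} for reiteration, not as its conclusion.
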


\begin{corollary}
  \label{cor::interpol-cont-embed}
  Let $q>d$ and $\theta \in
  (\frac12+\frac{d}{2q},1]$. Then
  \begin{equation*}
    \bigl[W^{-1,q}_D(\Omega),W^{1,q}_D(\Omega)\bigr]_{\theta} \embeds
    C^{0,2\theta-1-\frac{d}q}(\overline\Omega). 
  \end{equation*}
\end{corollary}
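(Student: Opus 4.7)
The strategy is to identify $[W^{-1,q}_D, W^{1,q}_D]_\theta$ as a Bessel-potential space of differentiability order $s = 2\theta - 1 > d/q$ via reiteration for complex interpolation combined with Lemma~\ref{lem::interpol-Bessel}, after which the Sobolev embedding of Lemma~\ref{lem::Sobolev-embeddings} immediately finishes the argument. The only care needed is to check that the intermediate exponent falls into the correct regime of Lemma~\ref{lem::interpol-Bessel}; this is immediate from the hypothesis on $\theta$ and is not expected to pose any serious obstacle.

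Concretely, for $\theta \in (\tfrac12 + \tfrac{d}{2q}, 1)$, I would start from $[W^{-1,q}_D(\Omega), W^{1,q}_D(\Omega)]_{1/2} = L^q(\Omega)$, provided by the second part of Lemma~\ref{lem::interpol-Bessel}. The reiteration theorem for the complex interpolation functor (e.g.\ Bergh--L\"ofstr\"om, Theorem~4.6.1), applied to the couple $(W^{-1,q}_D, W^{1,q}_D)$ with outer exponents $\tfrac12$ and $1$, then yields
$$\bigl[W^{-1,q}_D(\Omega), W^{1,q}_D(\Omega)\bigr]_\theta = \bigl[L^q(\Omega), W^{1,q}_D(\Omega)\bigr]_{2\theta - 1}$$
up to equivalent norms; the density conditions underlying reiteration are automatic in the present reflexive setting.

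The hypothesis $\theta > \tfrac12 + \tfrac{d}{2q}$ translates directly to $2\theta - 1 > d/q$, and since Assumption~\ref{ass::domain-geometry} guarantees $d \geq 2$, it in particular forces $2\theta - 1 > 1/q$. Hence the first case of Lemma~\ref{lem::interpol-Bessel} applies in its $D$-subscripted form and identifies the right-hand side with $H^{2\theta - 1, q}_D(\Omega)$ up to equivalent norms. By construction $H^{2\theta - 1, q}_D(\Omega) \embeds H^{2\theta - 1, q}(\Omega)$, and the condition $2\theta - 1 > d/q$ allows Lemma~\ref{lem::Sobolev-embeddings} to deliver $H^{2\theta - 1, q}(\Omega) \embeds C^{0, 2\theta - 1 - d/q}(\overline\Omega)$; concatenating these three continuous embeddings proves the claim for $\theta \in (\tfrac12 + \tfrac{d}{2q}, 1)$. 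The endpoint $\theta = 1$ is handled without invoking reiteration via the chain $[W^{-1,q}_D, W^{1,q}_D]_1 \embeds W^{1,q}_D(\Omega) \embeds H^{1,q}(\Omega) \embeds C^{0, 1 - d/q}(\overline\Omega)$, where the last step is again Lemma~\ref{lem::Sobolev-embeddings} combined with $q > d$.
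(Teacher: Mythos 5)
Your argument is correct and follows essentially the same route as the paper's proof: reiteration of the complex interpolation functor through $[W^{-1,q}_D,W^{1,q}_D]_{1/2}=L^q$ (Lemma~\ref{lem::interpol-Bessel}) to identify the space with $H^{2\theta-1,q}_D(\Omega)$, followed by the Sobolev embedding of Lemma~\ref{lem::Sobolev-embeddings}. Your additional verification that $2\theta-1>1/q$ and the separate treatment of the endpoint $\theta=1$ are fine but not substantive departures.
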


\begin{proof}
  The condition on $\theta$ requires exactly that
  $2\theta-1>\frac{d}q$. Thus, with the reiteration theorem for
  complex interpolation~(\cite[Remark~1.9.3.1]{Triebel1995}) and
  Lemma~\ref{lem::interpol-Bessel}:
  \begin{align*}
    \bigl[W^{-1,q}_D(\Omega),W^{1,q}_D(\Omega)\bigr]_{\theta} &=
    \Bigl[\bigl[W^{-1,q}_D(\Omega),W^{1,q}_D(\Omega)\bigr]_{\frac12},W^{1,q}_D(\Omega)\Bigr]_{2\theta-1}
    \\ &= \bigl[L^q(\Omega),W^{1,q}_D(\Omega)\bigr]_{2\theta-1} =
    H^{2\theta-1,q}_D(\Omega)
  \end{align*}
  and the claim follows by Sobolev embedding (Lemma~\ref{lem::Sobolev-embeddings}).
\end{proof}

Finally, let us note that by Assumption~\ref{ass::domain-geometry},
$\Omega$ is a (weak) Lipschitz domain. Thus, there exists a well
defined trace operator $\tr$ which is continuous 
\begin{equation}
  \tr \colon H^{\theta,q}_D(\Omega) \to L^{q}(\partial\Omega) \qquad
  \text{for}~\theta\in(\tfrac1q,1]\label{eq:trace-operator-continuous}
\end{equation}
and for which $\tr u = u|_{\partial\Omega}$ whenever
$u \in C(\overline\Omega) \cap H^{\theta,q}_D(\Omega)$. This follows for example using a
multiplicative trace inequality as in~\cite[Corollary~1.4.7.1]{Mazya2011} and
Lemma~\ref{lem::interpol-Bessel}, see~\cite[Lemma~3.6]{Dintelmann2009}. The condition
$\theta > \frac1q$ is sharp to have a trace operator. Further, we denote by $\tr_N$ the natural
restriction to $\Gamma_N$, that is, $\tr_N u \coloneqq (\tr u)|_{\Gamma_N}$. Then, by duality,
we immediately have:

\begin{lemma}\label{lem::trace}
  Let $q \in (1,\infty)$ and let $\theta \in [0,\frac1q)$. Then
  \[
    \tr_{N}^*\colon L^q(\Gamma_N) \to
    \bigl[W^{-1,q}_D(\Omega),L^q(\Omega)\bigr]_\theta = H^{\theta-1,q}_D(\Omega)
  \]
  is continuous.
\end{lemma}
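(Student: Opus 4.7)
The plan is to reduce the statement to a straightforward duality argument applied to the continuous trace operator stated in~\eqref{eq:trace-operator-continuous}. The idea is as follows.

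First, observe that the assumption $\theta \in [0, \tfrac{1}{q})$ is equivalent to $1 - \theta \in (\tfrac{1}{q'}, 1]$. Hence, applying~\eqref{eq:trace-operator-continuous} with the exponents $q'$ and $1 - \theta$ in place of $q$ and $\theta$, we get that
\[
  \tr \colon H^{1-\theta, q'}_D(\Omega) \to L^{q'}(\partial \Omega)
\]
is continuous. Restricting to $\Gamma_N$ then yields that $\tr_N \colon H^{1-\theta, q'}_D(\Omega) \to L^{q'}(\Gamma_N)$ is continuous as well.

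Next, I identify the domain space $H^{1-\theta, q'}_D(\Omega)$ as a complex interpolation space. Since $1 - \theta > \tfrac{1}{q'}$, Lemma~\ref{lem::interpol-Bessel} (with $q'$ in place of $q$ and interpolation parameter $1 - \theta$) gives
\[
  H^{1-\theta, q'}_D(\Omega) = \bigl[L^{q'}(\Omega), W^{1,q'}_D(\Omega)\bigr]_{1-\theta} = \bigl[W^{1,q'}_D(\Omega), L^{q'}(\Omega)\bigr]_{\theta}
\]
up to equivalent norms, where I have used the symmetry $[A_0, A_1]_\theta = [A_1, A_0]_{1-\theta}$.

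Now I invoke duality for complex interpolation. All spaces in sight are reflexive and $W^{1,q'}_D(\Omega) \embeds_d L^{q'}(\Omega)$, so~\cite[Corollary~4.5.2]{Bergh1976} applies and yields
\[
  \bigl[W^{1,q'}_D(\Omega), L^{q'}(\Omega)\bigr]_\theta^{*} = \bigl[W^{-1,q}_D(\Omega), L^q(\Omega)\bigr]_\theta,
\]
with the latter space being $H^{\theta-1,q}_D(\Omega)$ by definition. Taking the Banach space adjoint of the continuous trace $\tr_N$ above, identifying $L^{q'}(\Gamma_N)^* = L^q(\Gamma_N)$, and combining with the preceding identifications produces the continuous map
\[
  \tr_N^{*} \colon L^q(\Gamma_N) \to H^{\theta-1,q}_D(\Omega)
\]
as claimed. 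There is no real obstacle here; the only thing requiring some care is checking the hypotheses of the duality theorem (reflexivity and density of the intersection), but these are built into the setup of the Bessel-potential scale under Assumption~\ref{ass::domain-geometry}.
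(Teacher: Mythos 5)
Your proposal is correct and is precisely the argument the paper has in mind when it states the lemma follows ``by duality'' from~\eqref{eq:trace-operator-continuous}: dualize the trace $\tr_N\colon H^{1-\theta,q'}_D(\Omega)\to L^{q'}(\Gamma_N)$ and identify the dual space via Lemma~\ref{lem::interpol-Bessel} and~\cite[Corollary~4.5.2]{Bergh1976}. The only cosmetic point is the endpoint $\theta=0$, where Lemma~\ref{lem::interpol-Bessel} (stated for interpolation parameters in $(0,1)$) is not formally needed since $H^{1,q'}_D(\Omega)=W^{1,q'}_D(\Omega)$ and the claim is just the dualized trace theorem.
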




\subsection{Maximal parabolic regularity}

We next turn to the notion of maximal parabolic regularity. The case
considered here will be that of \emph{constant domains}.  Given two
Banach spaces $E_1 \embeds_d E_0$, we will use the following
abbreviation for the maximal regularity type spaces:
\[
  \W^{1,r}\bigl(I,(E_0,E_1)\bigr) \coloneqq  W^{1,r}(I,E_0) \cap L^r(I,E_1), \qquad r
  \in (1,\infty).
\]
Suppose that there exists an operator on $E_0$ with domain $E_1$ which
is the generator of an analytic semigroup on $E_0$. (This will always
be satisfied in the following.) Let $A\colon I \to \Lkal(E_1,E_0)$ be
a bounded and measurable operator family such that $A(t)$ is a closed
operator in $E_0$ with domain $E_1$ for each $t \in I$. Then $A$ is
said to satisfy \emph{(nonautonomous) maximal parabolic regularity} on
$L^r(I,E_0)$, if for every $f \in L^r(I,E_0)$ and every
$w_0 \in (E_0,E_1)_{1/r',r}$ there exists a unique solution
$w \in \W^{1,r}(I,(E_0,E_1))$ to the equation
\[
  \partial w + Aw = f \quad \text{ in } L^r(I,E_0), \qquad w(0) = w_0\quad
  \text{ in } (E_0,E_1)_{1/r',r}
\]
where $\partial \colon W^{1,r}(I,E_0) \to L^r(I,E_0)$ denotes the
distributional derivative. (We tacitly identifty $(Aw)(t) = A(t)w(t)$
here.) Equivalently, $A$ satisfies (nonautonomous) maximal parabolic
regularity on $L^r(I,E_0)$ \emph{if and only if} the total
differential operator
\begin{equation}
  \label{eq::max-reg-equiv-total-deriv}
  \bigl(\partial + A,\gamma_0\bigr) \colon  \W^{1,r}\bigl(I,(E_0,E_1)\bigr) \to L^r(I,E_0) \times (E_0,E_1)_{1/r',r} 
\end{equation}
is continuously invertible. Due to the assumption on $E_0$ and $E_1$,
it is also equivalent to consider only the case of initial value
$0$. We refer to e.g.~\cite[Proposition~3.1]{Amann2004}.

If $A$ is in fact autonomous, that is, $A(t) \equiv A$ for every
$t \in I$, then maximal parabolic regularity of $A$ on $L^\rho(I,E_0)$ for
\emph{some} $\rho \in (1,\infty)$ is equivalent to maximal parabolic regularity of
$A$ on $L^r(I,E_0)$ for \emph{any} $r \in (1,\infty)$~(\cite[Remark
6.1d]{Amann2004}). In this case we just say that $A$ satisfies \emph{maximal
  parabolic regularity on $E_0$}.

We will freely use that if $A$ satisfies maximal parabolic regularity
on $E_0$, then so does $A + \lambda$ for any scalar $\lambda$.

The following proposition with a sufficient condition for
nonautonomous maximal parabolic regularity going back
to~\cite{Amann2004,Pruess2001} will be the driving force for our later
considerations:

\begin{proposition}[{\cite{Amann2004,Pruess2001}}]
  \label{prop::pruess-schnaubelt-cont-MPR}
  In the above setting, suppose that
  $A\colon \overline I \to \Lkal(E_1,E_0)$ is continuous and that for
  every $\tau \in \overline I$, the operator $A(\tau)$ satisfies
  maximal parabolic regularity on $E_0$. Then $A$ satisfies
  nonautonomous maximal parabolic regularity on $L^r(I,E_0)$ for every
  $r \in (1,\infty)$.
\end{proposition}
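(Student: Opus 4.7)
The plan is to reduce the nonautonomous problem to the autonomous one on short subintervals via a perturbation argument, and then assemble the local solutions into a global one by a finite patching procedure based on trace properties of the maximal regularity space.

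For the local step, fix $\tau \in \overline I$ and a short subinterval $J \subset \overline I$ containing $\tau$. The autonomous hypothesis yields that
\[
  \bigl(\partial + A(\tau),\gamma_0\bigr) \colon \W^{1,r}\bigl(J,(E_0,E_1)\bigr) \to L^r(J,E_0) \times (E_0,E_1)_{1/r',r}
\]
is a topological isomorphism for every $r \in (1,\infty)$, cf.~\eqref{eq::max-reg-equiv-total-deriv} and the autonomous characterization of MPR preceding the proposition. I would then split
\[
  (\partial + A,\gamma_0) = \bigl(\partial + A(\tau),\gamma_0\bigr) + \bigl(A-A(\tau),0\bigr)
\]
and interpret the second term as the multiplication operator associated to $t \mapsto A(t)-A(\tau)$, whose norm as a map $\W^{1,r}(J,(E_0,E_1)) \to L^r(J,E_0)$ is bounded by $\sup_{t\in\overline J}\|A(t)-A(\tau)\|_{\Lkal(E_1,E_0)}$, via the continuous embedding $\W^{1,r}(J,(E_0,E_1)) \embeds L^r(J,E_1)$. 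By continuity of $A$ on the compact interval $\overline I$, this supremum can be made arbitrarily small by shrinking $J$, so a Neumann-series argument shows that $(\partial + A,\gamma_0)$ remains an isomorphism on every sufficiently short subinterval containing $\tau$.

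For the global step, I would cover the compact interval $\overline I$ by finitely many intervals $I_k = (t_{k-1},t_k)$, $k=1,\dots,N$, with $0 = t_0 < \cdots < t_N = T$, each chosen short enough that the local result applies around some $\tau_k \in \overline{I_k}$. Given $f \in L^r(I,E_0)$ and $w_0 \in (E_0,E_1)_{1/r',r}$, I inductively construct $w|_{I_k} \in \W^{1,r}(I_k,(E_0,E_1))$ using as initial value at $t_{k-1}$ the terminal trace of the solution from the preceding interval; this trace lies in $(E_0,E_1)_{1/r',r}$ by the standard embedding $\W^{1,r}(I_k,(E_0,E_1)) \embeds C(\overline{I_k},(E_0,E_1)_{1/r',r})$. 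Concatenation produces the desired $w \in \W^{1,r}(I,(E_0,E_1))$, which is unique since every local problem is uniquely solvable.

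The delicate point is ensuring that the isomorphism constants of $(\partial + A(\tau_k),\gamma_0)$ stay uniformly bounded across the covering, so that a single smallness threshold for $\|A(\cdot)-A(\tau_k)\|_{\Lkal(E_1,E_0)}$ suffices to close the Neumann series on every piece. This uniformity follows from the continuity of $\tau \mapsto A(\tau)$ combined with a compactness argument on $\overline I$: inverses of autonomous MPR operators depend continuously on the operator in a neighborhood of any fixed $\tau \in \overline I$, hence remain uniformly bounded on the compact parameter set. With this in hand, the local-to-global construction outlined above yields nonautonomous MPR on $L^r(I,E_0)$ for every $r \in (1,\infty)$.
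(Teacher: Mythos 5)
The paper does not prove Proposition~\ref{prop::pruess-schnaubelt-cont-MPR} at all; it is quoted from \cite{Amann2004,Pruess2001}, and your freeze-the-coefficient/Neumann-series/patching argument is precisely the proof given in those references, so your route is the standard one and is essentially correct. The only point you should make explicit is that the smallness condition for the Neumann series involves the norm of $(\partial + A(\tau_k),\gamma_0)^{-1}$ \emph{on the subinterval} $I_k$, so you need uniformity of the autonomous maximal regularity constants not only over $\tau\in\overline I$ (which your compactness-plus-continuity-of-inversion argument gives) but also over all subintervals $J\subseteq I$; this follows for autonomous operators by extending the right-hand side by zero, translating, and restricting the solution (exactly the argument the paper uses for $C_{\partial+A}(\tau,T_\bullet)\le C_{\partial+A}(0,T)$ in the proof of Theorem~\ref{thm::exonW1p}), together with the bound $\|e^{-tA(\tau)}w_0\|_{\W^{1,r}}\lesssim\|w_0\|_{(E_0,E_1)_{1/r',r}}$ for the initial-value part. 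With that detail supplied, the local-to-global construction and the uniqueness argument go through as you describe.
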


Since an operator satisfying maximal parabolic regularity on $E_0$ is
also the (negative) generator of an analytic semigroup on $E_0$ with domain
$E_1$, the above assumption in this regard is always satisfied in the
context of Proposition~\ref{prop::pruess-schnaubelt-cont-MPR}.

We close this section with embeddings for the maximal regularity
spaces.

\begin{lemma}[{\cite[Theorem~3]{Amann2005}}]
  \label{lem::max-reg-spaces-embed}
  Let $E_1 \embeds_d E_0$ be as above and let $r \in (1,\infty)$. Then
  \begin{equation*}
    \W^{1,r}\bigl(I,(E_0,E_1)\bigr) \embeds C\bigl(\overline I,(E_0,E_1)_{1/r',r}\bigr).
  \end{equation*}
  Further, if $\theta \in \mathopen[0,1-\frac1{r}\mathclose)$ and
  $\tau \in (\frac1r,1-\theta)$, then
    \begin{equation*}
    \W^{1,r}\bigl(I,(E_0,E_1)\bigr) \embeds C^{0,\tau-1/r}\bigl(\overline I,[E_0,E_1]_\theta\bigr),
  \end{equation*}
  and the latter embedding is in fact \emph{compact} when
  $E_1 \embeds_c E_0$.
\end{lemma}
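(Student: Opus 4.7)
\emph{Plan.} Both embeddings rely on well-known trace and interpolation properties of vector-valued Bochner--Sobolev intersections, combined with scalar Sobolev embeddings in time. The first embedding identifies $(E_0,E_1)_{1/r',r}$ as the trace space of $\W^{1,r}(I,(E_0,E_1))$; the second uses a mixed-derivatives interpolation estimate to trade some spatial for fractional temporal regularity, followed by a scalar Sobolev embedding into a Hölder space. Compactness is then bootstrapped by moving the parameters $(\theta,\tau)$ slightly and invoking compact interpolation of the embedding $E_1 \embeds_c E_0$ together with Arzelà--Ascoli.

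\emph{Continuous embeddings.} For the trace inclusion, I first extend any $w \in \W^{1,r}(I,(E_0,E_1))$ to $\widetilde w \in W^{1,r}(\R,E_0)\cap L^r(\R,E_1)$ by reflection across $\partial I$ and smooth cut-off, with norm bounds depending only on $I$. I then invoke the Lions--Peetre characterisation of $(E_0,E_1)_{1/r',r}$ as the trace space at $t=0$ of $W^{1,r}((-\infty,0],E_0)\cap L^r((-\infty,0],E_1)$; continuity of the translation $\tau\mapsto \widetilde w(\tau+\cdot)$ in the intersection norm then yields a continuous, uniformly bounded map $\overline I \ni t\mapsto w(t)\in (E_0,E_1)_{1/r',r}$. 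For the Hölder inclusion with $\theta\in [0,1-1/r)$ and $\tau\in(1/r,1-\theta)$, the central technical ingredient is the vector-valued mixed-derivatives embedding
\[
  \W^{1,r}(I,(E_0,E_1)) \embeds W^{1-\theta,r}\bigl(I,[E_0,E_1]_\theta\bigr),
\]
obtained via complex interpolation of the pair $\bigl(L^r(I,E_1),W^{1,r}(I,E_0)\bigr)$ at level $\theta$, using that complex interpolation commutes with $L^r(I,\cdot)$ in the target variable (here the density $E_1 \embeds_d E_0$ is essential). Since $1-\theta > 1/r$, the scalar Sobolev embedding $W^{s,r}(I,X) \embeds C^{0,s-1/r}(\overline I,X)$ applied with $X = [E_0,E_1]_\theta$ and $s=1-\theta$ yields the continuous inclusion into $C^{0,1-\theta-1/r}(\overline I,X)$ and, since $\tau-1/r < 1-\theta-1/r$, a fortiori into $C^{0,\tau-1/r}(\overline I,X)$.

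\emph{Compactness and main obstacle.} Assuming $E_1 \embeds_c E_0$, I pick $\theta'\in (\theta,1-1/r)$ and $\tau'\in(\tau,1-\theta')$; these choices are possible because $\tau<1-\theta$ strictly. Applying the foregoing Hölder embedding with $(\theta',\tau')$ gives the bounded inclusion $\W^{1,r}(I,(E_0,E_1))\embeds C^{0,\tau'-1/r}(\overline I,[E_0,E_1]_{\theta'})$. Compactness of $E_1\embeds E_0$ interpolates to the compact embedding $[E_0,E_1]_{\theta'}\embeds_c [E_0,E_1]_\theta$ via Cwikel's theorem for complex interpolation of compact operators. Arzelà--Ascoli applied to a bounded family in $C^{0,\tau'-1/r}(\overline I,[E_0,E_1]_{\theta'})$ --- which is pointwise precompact in $[E_0,E_1]_\theta$ and uniformly Hölder-equicontinuous of exponent $\tau'-1/r$ as an $[E_0,E_1]_\theta$-valued family --- yields relative compactness in $C(\overline I,[E_0,E_1]_\theta)$; the standard interpolation inequality between Hölder norms on a compact interval then upgrades this to relative compactness in $C^{0,\tau-1/r}(\overline I,[E_0,E_1]_\theta)$ because $\tau<\tau'$. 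I expect the main obstacle to be the vector-valued mixed-derivatives identity, which requires a careful invocation of Amann's machinery on anisotropic function spaces to ensure that complex interpolation in the target is compatible with the Bochner--Sobolev calculus on $I$; everything else is a standard packaging of trace, interpolation, and Arzelà--Ascoli arguments.
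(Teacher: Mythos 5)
The paper offers no proof of this lemma at all: it is quoted verbatim from Amann (the cited Theorem~3), whose argument is elementary — boundedness of $t\mapsto u(t)$ in the trace space $(E_0,E_1)_{1/r',r}$ (your first part, which is fine), the estimate $\lVert u(t)-u(s)\rVert_{E_0}\le |t-s|^{1/r'}\lVert u'\rVert_{L^r(I,E_0)}$ from the fundamental theorem of calculus, and then the interpolation inequality $\lVert x\rVert_{(E_0,E_1)_{\lambda(1-1/r),1}}\lesssim \lVert x\rVert_{E_0}^{1-\lambda}\lVert x\rVert_{(E_0,E_1)_{1/r',r}}^{\lambda}$ obtained by reiteration, which yields Hölder continuity into $(E_0,E_1)_{\theta,1}\embeds [E_0,E_1]_\theta$; compactness then follows from Arzelà--Ascoli combined with compactness of the real-interpolation embeddings when $E_1\embeds_c E_0$. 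No vector-valued harmonic analysis is needed anywhere.

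Your proposal has a genuine gap exactly at its central step. The inclusion of the intersection space into $[W^{1,r}(I,E_0),L^r(I,E_1)]_\theta$ is indeed automatic, but the identification (or even one-sided embedding) of this complex interpolation space with a fractional Sobolev/Bessel space $W^{1-\theta,r}(I,[E_0,E_1]_\theta)$ is \emph{not} a consequence of Calder\'on's theorem that complex interpolation commutes with $L^r(I,\cdot)$ — that result only covers couples with the \emph{same} temporal smoothness. What you need is a vector-valued mixed-derivative theorem, and such identities are known only under additional structural hypotheses (UMD of $E_0,E_1$, bounded imaginary powers / Dore--Venni-type arguments, or Amann's anisotropic-space machinery which itself imposes extra assumptions). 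The lemma, however, is stated for an arbitrary densely embedded couple $E_1\embeds_d E_0$ with nothing beyond the analytic-semigroup setting, so the step you yourself flag as "the main obstacle" is precisely the unproven content; deferring it to "Amann's machinery" is circular here, since the elementary route sketched above is what Amann actually uses. A second, more minor but still incorrect point: you invoke "Cwikel's theorem for complex interpolation of compact operators" — Cwikel's compactness theorem concerns the \emph{real} method, and the corresponding statement for the complex method is not known in general. The compact embedding $[E_0,E_1]_{\theta'}\embeds_c[E_0,E_1]_{\theta}$ you need is nevertheless true, but one should obtain it by sandwiching, $[E_0,E_1]_{\theta'}\embeds (E_0,E_1)_{\theta',\infty}\embeds_c(E_0,E_1)_{\theta,1}\embeds[E_0,E_1]_\theta$, using real-method compactness for $\theta<\theta'$. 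With the mixed-derivative identity replaced by the elementary Hölder/interpolation-inequality argument and the compactness justification repaired in this way, your overall plan (Arzelà--Ascoli plus a Hölder-norm interpolation to pass from exponent $\tau'-1/r$ down to $\tau-1/r$) does work.
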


\subsection{The differential operator}

We say that $\rho \colon \Omega \to \R^{d\times d}$ is a
\emph{coefficient function} if it is measurable, bounded, and coercive
in the sense that
\begin{equation*}
  \frac{z \cdot \rho(x) z}{|z|^2} \geq \rho_\bullet > 0 \quad
  \text{for almost all}~x \in \Omega \quad \text{for all}~z
  \in \R^d.
\end{equation*}

If $\rho$ is a coefficient function, then we define the second-order
divergence form operator
\begin{equation*}
  -\nabla\cdot\rho\nabla \colon
W^{1,2}_D(\Omega) \to W^{-1,2}_D(\Omega), \quad \blangle -\nabla\cdot\rho\nabla u,v\brangle \coloneqq \int_\Omega
  \rho\nabla u \cdot \nabla v  \quad (v \in W^{1,2}_D(\Omega)).
\end{equation*}
By the assumptions on the coefficient function $\rho$, the operator
$-\nabla\cdot\rho\nabla$ is continuous. Further, due to the
Lax-Milgram Lemma, $-\nabla \cdot \rho \nabla + 1$ is a (topological)
isomorphism and we can omit the ``$+1$'' if $\Gamma_D \neq
\emptyset$. In the present geometric framework of Assumption~\ref{ass::domain-geometry}, we
then automatically have $\mathcal{H}_{d-1}(\Gamma_D) > 0$. Consider the part of the operator in $W^{-1,q}_D(\Omega)$
for $q>2$. We do not relabel this operator by slight abuse of
notation; it will always be clear from the context which $q$ is
meant. This operator is clearly still bijective, but in general we
will \emph{not} have
$\Dom_{W^{-1,q}_D(\Omega)}(-\nabla\cdot\rho\nabla+1) =
W^{1,q}_D(\Omega)$; at least not for $q$ which are not very close to
$2$~(\cite[Theorem~5.6]{Dintelmann2016}). 

\emph{If}
$\Dom_{W^{-1,q}_D(\Omega)}(-\nabla\cdot\rho\nabla+1) =
W^{1,q}_D(\Omega)$, a situation which we will enforce for a $q>d$ as
one of our main assumptions below, then there are some good
consequences. To set the stage, we first introduce, for any
$q \geq 2$, the part $A$ of
$-\nabla\cdot\rho\nabla$ in $L^q(\Omega)$ by
\begin{align*}
  \Dom (A) &  = \Bigl\{ u \in W^{1,2}_D(\Omega) \cap L^q(\Omega)
  \colon -\nabla\cdot\rho\nabla u \in L^q(\Omega)\Bigr\}, \\[0.25em] A u & =
  -\nabla\cdot\rho\nabla u \qquad (u \in \Dom(A)).
\end{align*}
Again, it will always be clear from context to which $q$ the current
incarnation of $A$ refers. In the general
context of mixed boundary conditions and an irregular domain, the
domain of $A$ will be very difficult to determine.

We next collect a few important properties
of the operators $A$ and $-\nabla\cdot\rho\nabla$. In the next result,
the notion of \emph{positive} operator is meant as
in~\cite[Definition~1.14.1]{Triebel1995}, but it is not fundamental
for the rest of this paper.

\begin{proposition}[{\cite[Proposition~4.6/Theorem~11.5]{AuscherBadrHaller-DintelmannRehberg2014}}]
  \label{prop::divergence-operator-properties-no-iso-yet}
  Let $\rho$ be a coefficient function and let $q \geq 2$. Then we
  have the following:
  \begin{enumerate}[(1)] 
  \item The operators $A$ and $-\nabla\cdot\rho\nabla$ are
    \emph{positive} operators on $L^q(\Omega)$ and
    $W^{-1,q}_D(\Omega)$. In particular, their \emph{fractional powers} are
    well-defined. 
  \item Even more, the operators admit a bounded $\Hkal^\infty$
    calculus. This implies that they exhibit \emph{bounded imaginary
      powers} and \emph{maximal parabolic regularity}.
  \end{enumerate}
  The assertions for $A$ also hold true for $q \in (1,2)$.
\end{proposition}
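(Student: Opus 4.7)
The plan is to establish everything first at the $L^2$-level via form methods and then extrapolate in $q$ by heat-kernel techniques. First, I would consider the continuous, coercive (thanks to the pointwise ellipticity of $\rho$, together with $\Hkal_{d-1}(\Gamma_D)>0$ and Poincaré on $W^{1,2}_D(\Omega)$) sesquilinear form
\[
a(u,v) = \int_\Omega \rho \nabla u \cdot \nabla v \dd x
\]
on $V = W^{1,2}_D(\Omega)$. By the Kato--Lions construction, the associated operator on $L^2(\Omega)$ (respectively on $V^* = W^{-1,2}_D(\Omega)$) is m-sectorial with sector angle strictly less than $\pi/2$, hence positive in the sense of~\cite[Def.~1.14.1]{Triebel1995} and the generator of a bounded analytic semigroup. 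In particular both operators admit well-defined fractional powers. Verifying the Beurling--Deny conditions, I would then conclude that the semigroup is sub-Markovian, hence extrapolates to consistent analytic $C_0$-semigroups on every $L^q(\Omega)$, $q \in (1,\infty)$, generated precisely by the parts $A_q$ of $-\nabla\cdot\rho\nabla$ in $L^q(\Omega)$. This already yields positivity of $A_q$ on $L^q(\Omega)$ for the whole range $q \in (1,\infty)$, covering the extension statement for $q < 2$.

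To upgrade positivity to a bounded $\Hkal^\infty$-calculus---which by Dore--Venni/Kalton--Weis theory subsumes bounded imaginary powers and, via the UMD property of $L^q$, (nonautonomous) maximal parabolic regularity---I would invoke Gaussian upper bounds for the heat kernel of $A_q$. These follow, in the present rough geometric setting of Assumption~\ref{ass::domain-geometry} with merely bounded measurable $\rho$, from the de Giorgi--Nash--Moser theory for divergence-form operators with mixed boundary conditions on Gröger-regular domains combined with the Davies perturbation trick. Once the Gaussian bound is in hand, the general machinery à la Duong--Robinson--Sikora or Kunstmann--Weis delivers the bounded $\Hkal^\infty$-calculus on $L^q(\Omega)$ for every $q \in (1,\infty)$.

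The corresponding statements on $W^{-1,q}_D(\Omega)$ are then obtained by a duality/extrapolation argument. The $L^{q'}$-version of the operator, viewed with respect to the $L^2$-pairing extending to the duality $W^{-1,q}_D = (W^{1,q'}_D)^*$, has a natural adjoint that coincides with the weak form operator $-\nabla\cdot\rho\nabla$ on $W^{-1,q}_D(\Omega)$. Since positivity, bounded $\Hkal^\infty$-calculus, bounded imaginary powers and maximal parabolic regularity are all stable under taking adjoints in reflexive spaces, all properties transfer. Alternatively, one can pass from $L^q$ to $W^{-1,q}_D$ by a consistent form-extension of the semigroup and verify sectoriality directly.

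The main obstacle is clearly the Gaussian heat-kernel bound in this rough setting: bounded measurable coefficients combined with a mere weak Lipschitz domain and mixed boundary conditions preclude the classical Aronson--Nash approach, and one must rely on the refined de Giorgi--Nash--Moser analysis for Gröger-regular configurations as carried out in the cited reference~\cite{AuscherBadrHaller-DintelmannRehberg2014}. Everything downstream---Beurling--Deny extrapolation, the passage from Gaussian bounds to an $\Hkal^\infty$-calculus, and the duality argument on $W^{-1,q}_D$---is comparatively standard machinery, so the entire weight of the proposition rests on the kernel estimate.
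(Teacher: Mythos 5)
First, a point of order: the paper does not prove this proposition at all---it is quoted wholesale from the cited reference, so there is no in-paper argument to compare against. Judged on its own terms, your outline of the $L^q$ part does match the strategy of that reference: form methods at the $L^2$ level, extrapolation of the semigroup to all $L^q$, Gaussian upper bounds in the Gr\"oger-regular mixed-boundary setting (correctly identified as the hard analytic core), and the standard passage from Gaussian bounds to a bounded $\Hkal^\infty$-calculus, whence bounded imaginary powers and, on a UMD space with calculus angle below $\pi/2$, maximal parabolic regularity. Two caveats even there: no symmetry is assumed on $\rho$, so you need the non-symmetric (Ouhabaz-type) invariance criteria rather than classical Beurling--Deny; and your coercivity argument via Poincar\'e presupposes $\mathcal{H}_{d-1}(\Gamma_D)>0$, which the standing assumptions do not guarantee ($\Gamma_N=\partial\Omega$ is allowed), so positivity must be read for the shifted operator $-\nabla\cdot\rho\nabla+1$, consistent with how the paper itself treats the pure Neumann case.

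The genuine gap is the $W^{-1,q}_D$ step. The adjoint of the $L^{q'}$-realization with respect to the $L^{q'}$--$L^q$ pairing is the $L^q$-realization of the transposed coefficient, \emph{not} the operator in $W^{-1,q}_D=(W^{1,q'}_D)^*$. To reach $W^{-1,q}_D$ by duality you must dualize the part of the operator in $W^{1,q'}_D$, and the mechanism for controlling that realization is the similarity transform furnished by the Kato square root property: $(-\nabla\cdot\rho^T\nabla+1)^{1/2}\colon W^{1,q'}_D\to L^{q'}$ is an isomorphism for $q'\in(1,2]$ (the main theorem of the cited reference, cf.\ Proposition~\ref{prop::kato-square-root}), and its adjoint $(-\nabla\cdot\rho\nabla+1)^{1/2}\colon L^q\to W^{-1,q}_D$ intertwines the $L^q$- and $W^{-1,q}_D$-realizations, transporting the $\Hkal^\infty$-calculus. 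This is not ``comparatively standard machinery'' riding on the kernel estimate: it is a second deep input of comparable weight, and it is exactly what restricts the $W^{-1,q}_D$ assertions to $q\geq 2$ while the $L^q$ assertions hold for all $q\in(1,\infty)$---an asymmetry that your duality argument, as written, cannot account for.
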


We can transfer the maximal parabolic regularity property also to the
interpolation spaces between $W^{-1,q}_D(\Omega)$ and
$L^q(\Omega)$. (In fact, the same is true for the bounded imaginary
powers; we use this in Appendix~\ref{app::interpolation}.)

\begin{corollary}[{\cite[Theorem 5.16iv]{Dintelmann2009}}]
  \label{cor::max-reg-interpolation-space}
  Let $\rho$ be a coefficient function and let $q \geq 2$. Then the
  part of $-\nabla\cdot\rho\nabla$ in
  $[W^{-1,q}_D(\Omega),L^q(\Omega)]_\theta$ satisfies maximal
  parabolic regularity for every $\theta \in [0,1]$.
\end{corollary}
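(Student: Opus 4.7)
The plan is to lift the bounded $\Hkal^\infty$-calculus (hence bounded imaginary powers, BIP) established in Proposition~\ref{prop::divergence-operator-properties-no-iso-yet} from the two endpoint spaces $W^{-1,q}_D(\Omega)$ and $L^q(\Omega)$ to the complex interpolate, and then deduce maximal parabolic regularity from BIP via the Dore--Venni theorem. The endpoint cases $\theta \in \{0,1\}$ are direct consequences of Proposition~\ref{prop::divergence-operator-properties-no-iso-yet}; only $\theta \in (0,1)$ requires argument.

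First, I would record the \emph{consistency} of $-\nabla\cdot\rho\nabla + 1$ viewed on the two endpoint spaces: on $C_D^\infty(\Omega)$ the two incarnations produce the same distribution, and by uniqueness for the underlying elliptic problem their resolvents agree on the intersection $W^{-1,q}_D(\Omega) \cap L^q(\Omega) = L^q(\Omega)$. This consistency is what allows one to speak unambiguously of the ``part of $-\nabla\cdot\rho\nabla$ in $[W^{-1,q}_D(\Omega),L^q(\Omega)]_\theta$'' and prepares the ground for the interpolation-of-operators machinery.

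Next, I would apply the classical interpolation theorem for operators with BIP on a complex interpolation scale (cf.~Triebel, \emph{Interpolation Theory, Function Spaces, Differential Operators}, Section~1.15.3): under consistency and BIP on both endpoints, BIP transfers to the interpolated operator, with BIP-angle bounded by the maximum of the two endpoint angles. Since Proposition~\ref{prop::divergence-operator-properties-no-iso-yet} supplies a full bounded $\Hkal^\infty$-calculus at each endpoint, both endpoint angles are $<\pi/2$, and so is the resulting one on $[W^{-1,q}_D(\Omega),L^q(\Omega)]_\theta$. The same theorem also identifies the domain of the interpolated operator with that of the part of $-\nabla\cdot\rho\nabla$ in the interpolation space, which is the subtle but decisive point.

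Finally, to pass from BIP to maximal parabolic regularity, I would invoke the Dore--Venni theorem: on a UMD Banach space, a sectorial operator with BIP of angle $<\pi/2$ has maximal parabolic regularity. Now $L^q(\Omega)$ is UMD for $q\in(1,\infty)$; as closed subspaces and duals of UMD spaces are UMD, so is $W^{-1,q}_D(\Omega) = (W^{1,q'}_D(\Omega))^*$; and the UMD property is preserved under complex interpolation, so $[W^{-1,q}_D(\Omega),L^q(\Omega)]_\theta$ is UMD as well. Dore--Venni therefore applies and delivers the claim. The main obstacle is the domain identification alluded to above: one must ensure that the abstract complex-interpolated operator really coincides with the part operator, which hinges precisely on the consistency of resolvents together with the interpolation-of-fractional-powers statement coming with the BIP transfer.
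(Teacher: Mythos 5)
Your argument is sound, but it is worth noting that the paper itself does not prove this corollary at all: it is quoted verbatim from~\cite[Theorem~5.16iv]{Dintelmann2009}, and the proof there rests on the observation that \emph{maximal parabolic regularity itself interpolates} for consistent operators --- one interpolates the bounded solution operator $f \mapsto Aw$ (equivalently, the inverse in~\eqref{eq::max-reg-equiv-total-deriv}) between $L^r(I,W^{-1,q}_D)$ and $L^r(I,L^q)$, using consistency of the resolvents and the identity $[L^r(I,X_0),L^r(I,X_1)]_\theta = L^r(I,[X_0,X_1]_\theta)$, together with the identification of the interpolated operator with the part operator. Your route instead transfers the bounded $\Hkal^\infty$-calculus/BIP from the endpoints to $[W^{-1,q}_D,L^q]_\theta$ and then invokes Dore--Venni on the (UMD) interpolation space; this is correct and all the ingredients you list (consistency on the nested couple $L^q \embeds W^{-1,q}_D$, identification of the interpolated operator with the part via restricted resolvents, preservation of UMD under duality, closed subspaces and complex interpolation, angle $<\pi/2$) do hold. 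Two remarks: first, Triebel's Section~1.15.3 is the \emph{single-space} reiteration theorem $[X,\Dom_X(A)]_\theta = \Dom_X(A^\theta)$ and is not the statement you need about transferring BIP across an interpolation couple of two different base spaces; the transfer instead follows directly by interpolating the consistent bounded operators $f(A_0)$ and $f(A_1)$ for $f \in H^\infty_0$ (Cauchy-integral representation plus consistency of resolvents) and then upgrading to the full calculus, or by citing a result tailored to consistent sectorial operators. Second, your approach is heavier than necessary --- it buys the stronger conclusion that the interpolated operator again has a bounded $\Hkal^\infty$-calculus, whereas the direct interpolation of the maximal-regularity isomorphism needs neither Dore--Venni nor any UMD considerations; but as a proof of the corollary it is perfectly adequate once the reference for the BIP transfer is repaired.
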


Proposition~\ref{prop::divergence-operator-properties-no-iso-yet}
enables us in particular to talk about the \emph{square root} of the
associated operators. For $q >2$, if in fact
\begin{equation}
  \label{eq::topo-iso-cond}
  \Dom_{W^{-1,q}_D(\Omega)}(-\nabla\cdot\rho\nabla+1) =
  W^{1,q}_D(\Omega),
\end{equation}
or, equivalently, $-\nabla\cdot\rho\nabla + 1$ is a topological
isomorphism $W^{1,q}_D(\Omega) \to W^{-1,q}_D(\Omega)$, then for the
square roots we have the following fundamental property at hand, the
\emph{Kato square root property}:

\begin{proposition}[{\cite[Theorem~6.5]{Disser2017_2}}]
  \label{prop::kato-square-root}
  Let $\rho$ be a coefficient function and let $q \geq 2$. Suppose
  that~\eqref{eq::topo-iso-cond} holds true. Then
  $\Dom(A+1)^{1/2} = W^{1,q}_D(\Omega)$, that is,
  \begin{equation*}
    (A+1)^{1/2} \colon W^{1,q}_D(\Omega) \to L^q(\Omega) \quad \text{is a
      topological isomorphism}.
  \end{equation*}
\end{proposition}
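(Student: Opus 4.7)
The plan is to deduce the Kato square root property directly from the bounded $H^\infty$ calculus of $\mathcal{A} \coloneqq -\nabla\cdot\rho\nabla + 1$ supplied by Proposition~\ref{prop::divergence-operator-properties-no-iso-yet} together with the topological isomorphism $\mathcal{A} \colon W^{1,q}_D(\Omega) \to W^{-1,q}_D(\Omega)$, which is equivalent to the hypothesis~\eqref{eq::topo-iso-cond}. The backbone is that $H^\infty$ calculus delivers bounded imaginary powers and therefore the fractional-power/complex-interpolation identity $\Dom_{W^{-1,q}_D(\Omega)}(\mathcal{A}^\theta) = \bigl[W^{-1,q}_D(\Omega),W^{1,q}_D(\Omega)\bigr]_\theta$ for $\theta \in (0,1)$. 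Specializing to $\theta = 1/2$ and invoking Lemma~\ref{lem::interpol-Bessel}, this interpolation space equals $L^q(\Omega)$. Hence $\mathcal{A}^{1/2}\colon L^q(\Omega) \to W^{-1,q}_D(\Omega)$ is a topological isomorphism (surjectivity and injectivity following from bijectivity of $\mathcal{A}$ via functional calculus).

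Next I would show that the inverse $\mathcal{A}^{-1/2}\colon W^{-1,q}_D(\Omega) \to L^q(\Omega)$ in fact maps $L^q(\Omega)$ bijectively onto $W^{1,q}_D(\Omega)$. For the inclusion $\mathcal{A}^{-1/2}(L^q(\Omega)) \subset W^{1,q}_D(\Omega)$, the functional calculus identity $\mathcal{A}^{-1} = \mathcal{A}^{-1/2}\mathcal{A}^{-1/2}$ combined with the hypothesis that $\mathcal{A}^{-1}\colon W^{-1,q}_D(\Omega) \to W^{1,q}_D(\Omega)$ is bounded suffices: if $u \in L^q(\Omega)$, then $u$ can be written as $u = \mathcal{A}^{-1/2}(v)$ for a unique $v \in L^q(\Omega) \subset W^{-1,q}_D(\Omega)$, and $\mathcal{A}^{-1/2}(u) = \mathcal{A}^{-1/2}(\mathcal{A}^{-1/2}(v)) = \mathcal{A}^{-1}(v) \in W^{1,q}_D(\Omega)$. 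For the reverse inclusion, given $w \in W^{1,q}_D(\Omega)$ put $v \coloneqq \mathcal{A}w \in W^{-1,q}_D(\Omega)$; then $u \coloneqq \mathcal{A}^{-1/2}(v) \in L^q(\Omega)$ and $\mathcal{A}^{-1/2}(u) = \mathcal{A}^{-1}(v) = w$, so $w \in \mathcal{A}^{-1/2}(L^q(\Omega))$. Boundedness of the bijection $\mathcal{A}^{-1/2}\colon L^q(\Omega) \to W^{1,q}_D(\Omega)$ then follows from the closed graph theorem.

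Finally, I would identify $(A+1)^{1/2}$ with the part of $\mathcal{A}^{1/2}$ in $L^q(\Omega)$, whose domain by construction is the set of $u \in L^q(\Omega)$ with $\mathcal{A}^{1/2} u \in L^q(\Omega)$, i.e.\ $(\mathcal{A}^{1/2})^{-1}(L^q(\Omega)) = \mathcal{A}^{-1/2}(L^q(\Omega)) = W^{1,q}_D(\Omega)$ by the previous step. Combined with the norm equivalence from invertibility this yields that $(A+1)^{1/2}\colon W^{1,q}_D(\Omega) \to L^q(\Omega)$ is a topological isomorphism.

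The main obstacle I anticipate is this last identification of the part of $\mathcal{A}^{1/2}$ with $(A+1)^{1/2}$: one has to verify that the holomorphic functional calculi of $\mathcal{A}$ on $W^{-1,q}_D(\Omega)$ and of $A+1$ on $L^q(\Omega)$ are consistent along the compatible pair, so that ``taking the part'' and ``taking the square root'' commute. This is a standard consequence of the fact that $A+1$ is itself the part of $\mathcal{A}$ in $L^q(\Omega)$ and that both operators are sectorial with bounded $H^\infty$ calculus (Proposition~\ref{prop::divergence-operator-properties-no-iso-yet}), but it is the only place where one has to go beyond formal algebra with the fractional powers. All remaining steps reduce to invertibility diagram chasing in the scale $W^{-1,q}_D(\Omega) \embeds L^q(\Omega) \embeds W^{1,q}_D(\Omega)$ (reversing the arrow).
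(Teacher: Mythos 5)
You should first note that the paper does not prove this proposition at all: it is quoted from~\cite[Theorem~6.5]{Disser2017_2}, so there is no internal proof to compare with, and your text is best judged as a derivation from the other results the paper quotes as black boxes. Read that way, your skeleton is viable and in fact mirrors the ``sketch argument'' the paper displays right after the proposition, run in the opposite logical direction: bounded $\Hkal^\infty$ calculus of $\mathcal{A}\coloneqq-\nabla\cdot\rho\nabla+1$ on $W^{-1,q}_D$ (Proposition~\ref{prop::divergence-operator-properties-no-iso-yet}) gives BIP, so by~\cite[Theorem~1.15.3]{Triebel1995}, hypothesis~\eqref{eq::topo-iso-cond} and Lemma~\ref{lem::interpol-Bessel} one gets $\Dom_{W^{-1,q}_D}(\mathcal{A}^{1/2})=[W^{-1,q}_D,W^{1,q}_D]_{1/2}=L^q$; then $\mathcal{A}^{-1/2}(L^q)=\mathcal{A}^{-1/2}\mathcal{A}^{-1/2}(W^{-1,q}_D)=\mathcal{A}^{-1}(W^{-1,q}_D)=W^{1,q}_D$, and consistency of the calculi of $\mathcal{A}$ and of its part $A+1$ in $L^q$ (resolvent consistency plus the integral representation of $(A+1)^{-1/2}$) turns this into $\Dom(A+1)^{1/2}=W^{1,q}_D$, with the topological statement by the closed graph theorem. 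In effect you replace the paper's citation of the dual isomorphism $\mathcal{A}^{1/2}\colon L^q\to W^{-1,q}_D$ (\cite[Theorem~11.5]{AuscherBadrHaller-DintelmannRehberg2014}) by the BIP/interpolation step; what this buys is a self-contained reduction to results already quoted in the paper, in the same spirit as Lemmas~\ref{lem::reiteration-frac-powers}, \ref{lem:interpolation-is-fracpower-domain} and~\ref{lem::domain2}.

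Two caveats. First, in your forward inclusion you claim that every $u\in L^q$ can be written as $u=\mathcal{A}^{-1/2}v$ with $v\in L^q$. That is false in general: the unique preimage is $v=\mathcal{A}^{1/2}u\in W^{-1,q}_D$, and $v\in L^q$ would mean exactly that $u$ lies in the domain of the part of $\mathcal{A}^{1/2}$ in $L^q$, i.e.\ in the very set you are identifying, so as written the step is circular. Fortunately the extra claim is not needed: with $v\in W^{-1,q}_D$ you still get $\mathcal{A}^{-1/2}u=\mathcal{A}^{-1/2}\mathcal{A}^{-1/2}v=\mathcal{A}^{-1}v\in W^{1,q}_D$ directly from~\eqref{eq::topo-iso-cond}; simply delete ``$\in L^q(\Omega)$''. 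Second, be clear about where the depth sits: for $q=2$ and nonsymmetric $\rho$ the conclusion is the Kato square root theorem with mixed boundary conditions, and your argument does not make it soft---the hard analysis is hidden in the quoted bounded $\Hkal^\infty$ calculus of $-\nabla\cdot\rho\nabla$ on $W^{-1,q}_D$, which in the literature rests on the solution of the $L^2$ Kato problem. So your route is a legitimate alternative reduction to the paper's quoted toolbox, not an elementary replacement of the proof in~\cite{Disser2017_2}.
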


The \emph{Kato square root property} classically refers to the case
$q=2$ which is the fundamental basis for the remaining ones; we refer
to the seminal work~\cite{BechtelEgertHaller-Dintelmann2020}, see
also~\cite{AuscherBadrHaller-DintelmannRehberg2014}. It is always
satisfied if $\mu$ is \emph{symmetric}. Note that from
Proposition~\ref{prop::kato-square-root} it also follows that
$(-\nabla\cdot\rho\nabla + 1)^{1/2}$ is a topological isomorphism
$L^q(\Omega) \to W^{-1,q}_D(\Omega)$,
cf.~\cite[Theorem~11.5]{AuscherBadrHaller-DintelmannRehberg2014}.

From the following sketch argument we infer that we can equivalently regard
$(A+1)^{1/2}$ either as the square root of $A+1$ or as the part of the
square root $(-\nabla\cdot\rho\nabla + 1)^{1/2}$ in
$L^q(\Omega)$:
\begin{align*}
  (-\nabla\cdot\rho\nabla + 1)^{-1/2} L^q(\Omega)  &=
  (-\nabla\cdot\rho\nabla + 1)^{-1}(-\nabla\cdot\rho\nabla +
  1)^{1/2} L^q(\Omega) \\ & = (-\nabla\cdot\rho\nabla +
  1)^{-1}W^{-1,q}_D(\Omega) \\ & = W^{1,q}_D(\Omega) = (A+1)^{-1/2}L^q(\Omega).
\end{align*}
With similar reasoning based on the foregoing, we in fact obtain the
same property for all fractional powers:
\begin{equation*}
  \Dom_{L^q(\Omega)}(-\nabla\cdot\rho\nabla+1)^\gamma = \Dom (A+1)^\gamma \qquad (\gamma
  \in [0,1]).
\end{equation*}
This will essentially allow us to get rid of the operator $A$ in the
following considerations, which eases notation significantly. Finally,
we will make free use of the \emph{reiteration theorem} for fractional
powers~(\cite[Theorem~1.15.3]{Triebel1995}), owing to $A$ being
positive and admitting bounded imaginary powers as established in
Proposition~\ref{prop::divergence-operator-properties-no-iso-yet}:
\begin{lemma}
  \label{lem::reiteration-frac-powers}
  Let $\rho$ be a coefficient function and let $q \geq 2$. Suppose
  that~\eqref{eq::topo-iso-cond} holds true. If
  $\gamma = (1-\theta)\alpha + \theta\beta$ for
  $\alpha,\beta \in [0,1]$ and $\theta \in (0,1)$, then
  \begin{equation*}
    \Dom_{L^q}(-\nabla\cdot\rho\nabla+1)^\gamma =
    \Bigl[\Dom_{L^q}(-\nabla\cdot\rho\nabla+1)^\alpha,
    \Dom_{L^q}(-\nabla\cdot\rho\nabla+1)^\beta\Bigr]_\theta.    
  \end{equation*}
\end{lemma}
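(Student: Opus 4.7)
The plan is to reduce the claim to a direct application of Triebel's reiteration theorem for fractional powers of positive operators with bounded imaginary powers \cite[Theorem~1.15.3]{Triebel1995}. The ingredients are already assembled in the preceding paragraphs, so the proof is essentially one of correct bookkeeping between the two realizations of the operator.

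First, I would note that by Proposition~\ref{prop::divergence-operator-properties-no-iso-yet}, the part $A$ of $-\nabla\cdot\rho\nabla$ in $L^q(\Omega)$ is a positive operator on $L^q(\Omega)$ admitting a bounded $\Hkal^\infty$ calculus, hence in particular bounded imaginary powers. The shifted operator $A+1$ inherits these properties (with a suitable translation of the spectrum improving positivity). Consequently, Triebel's reiteration theorem applies directly to $A+1$ on $L^q(\Omega)$ and yields, for $\gamma = (1-\theta)\alpha + \theta\beta$,
\[
  \Dom(A+1)^\gamma = \bigl[\Dom(A+1)^\alpha,\Dom(A+1)^\beta\bigr]_\theta
\]
up to equivalent norms.

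Next, I would invoke the identification established just before the lemma, namely
\[
  \Dom_{L^q(\Omega)}(-\nabla\cdot\rho\nabla+1)^\eta = \Dom(A+1)^\eta \qquad (\eta \in [0,1]),
\]
which was derived from Proposition~\ref{prop::kato-square-root} (and hence requires precisely the topological isomorphism assumption~\eqref{eq::topo-iso-cond}) together with the general identity for fractional powers of positive operators. Applying this identification for each of $\eta \in \{\alpha,\beta,\gamma\}$ and substituting into the reiteration identity for $A+1$ yields exactly the claim of the lemma.

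The only subtle point, and thus the main thing to verify carefully, is that the hypothesis~\eqref{eq::topo-iso-cond} really is needed in full strength to pass from $A$ to $-\nabla\cdot\rho\nabla + 1$ for the full range $\eta \in [0,1]$; for $\eta \in [0,\tfrac12]$ the identification is automatic via the Kato square root property, while for larger $\eta$ one uses the mapping properties of $-\nabla\cdot\rho\nabla+1\colon W^{1,q}_D(\Omega) \to W^{-1,q}_D(\Omega)$. Beyond this bookkeeping, no further work is required, and the equivalence of norms is preserved under each step.
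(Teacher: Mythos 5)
Your proposal is correct and follows essentially the same route as the paper: the lemma is exactly an application of Triebel's reiteration theorem \cite[Theorem~1.15.3]{Triebel1995} to the positive operator with bounded imaginary powers from Proposition~\ref{prop::divergence-operator-properties-no-iso-yet}, combined with the identification $\Dom_{L^q}(-\nabla\cdot\rho\nabla+1)^\gamma = \Dom(A+1)^\gamma$ obtained just before the lemma from the Kato square root property under~\eqref{eq::topo-iso-cond}. Only a small caveat on your closing remark: in this $L^q$-setting with $q>2$ the Kato square root property (Proposition~\ref{prop::kato-square-root}) is itself conditional on~\eqref{eq::topo-iso-cond}, so the identification is not ``automatic'' even for exponents below $\tfrac12$.
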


We gather a permanence principle for the optimal elliptic
regularity property~\eqref{eq::topo-iso-cond}. The upper bound
$q \leq 6$ in the statement is a technical limitation related to the
Sobolev exponent $2^* = 6$ in $d=3$.

\begin{lemma}[{\cite[Lemma~6.2]{Disser2015}}]
  \label{lem::iso-invariance-uniform-cont}
  Let $\rho$ be a coefficient function and let $q \in [2,6]$. Suppose
  that~\eqref{eq::topo-iso-cond} holds true. Then
  \begin{equation*}
    \Dom_{W^{-1,q}_D(\Omega)}(-\nabla\cdot\eta\rho\nabla + 1) = W^{1,q}_D(\Omega)
  \end{equation*}
  for every uniformly continuous positive function
  $\eta \in C(\overline\Omega)$.
\end{lemma}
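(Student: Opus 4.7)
The plan is to combine an easy invariance under constant positive factors with a localization-and-parametrix argument exploiting the uniform continuity of $\eta$ on $\overline\Omega$.

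First I would record that $\eta\rho$ is itself a coefficient function, since $\eta$ is continuous on the compact set $\overline\Omega$ and bounded away from zero; in particular $\eta\rho$ is coercive with constant $\rho_\bullet \inf_{\overline\Omega}\eta > 0$. Hence $-\nabla\cdot\eta\rho\nabla + 1 \colon W^{1,q}_D(\Omega) \to W^{-1,q}_D(\Omega)$ is continuous, and on the $q=2$ level bijective by Lax--Milgram. Injectivity on $W^{1,q}_D(\Omega)$ for $q \in [2,6]$ is inherited from $q=2$ via the embedding $W^{1,q}_D \embeds W^{1,2}_D$, so the claim reduces to the a priori estimate $\|u\|_{W^{1,q}_D} \leq C \|(-\nabla\cdot\eta\rho\nabla+1)u\|_{W^{-1,q}_D}$, from which surjectivity onto $W^{-1,q}_D$ then follows by a standard closed-range/density argument.

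In the special case $\eta \equiv c > 0$ this is immediate: $-\nabla\cdot c\rho\nabla$ shares the domain $W^{1,q}_D$ with $-\nabla\cdot\rho\nabla$ in $W^{-1,q}_D$ by hypothesis~\eqref{eq::topo-iso-cond}, and positivity of $-\nabla\cdot\rho\nabla$ on $W^{-1,q}_D(\Omega)$ from Proposition~\ref{prop::divergence-operator-properties-no-iso-yet} yields that $c(-\nabla\cdot\rho\nabla) + 1$ is a topological isomorphism $W^{1,q}_D \to W^{-1,q}_D$. For a general $\eta$, I would localize. Fix $\varepsilon > 0$; uniform continuity produces a finite cover $\{U_j\}_{j=1}^N$ of $\overline\Omega$ by small open sets compatible with the Gr\"oger geometry, points $x_j$ with $c_j \coloneqq \eta(x_j)$ satisfying $\|\eta - c_j\|_{L^\infty(U_j\cap\Omega)} < \varepsilon$, and a subordinate partition of unity $\{\varphi_j\}\subset C_D^\infty(\R^d)$ respecting the Dirichlet part. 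Using the constant-coefficient isomorphisms just established, one builds the parametrix
\begin{equation*}
  P f \coloneqq \sum_j \psi_j\,(-\nabla\cdot c_j\rho\nabla + 1)^{-1}(\varphi_j f),
\end{equation*}
with larger cutoffs $\psi_j \in C_D^\infty(\R^d)$, $\psi_j \equiv 1$ on $\supp\varphi_j$. The identity $(-\nabla\cdot\eta\rho\nabla + 1) P = \mathrm{Id} + K_\varepsilon$ then holds with $K_\varepsilon$ consisting of (i) a coefficient-freezing piece of the form $-\nabla\cdot(\eta - c_j)\rho\nabla(\psi_j\,\cdot\,)$ whose norm on $W^{-1,q}_D$ is $O(\varepsilon)$, and (ii) commutators of the divergence operator with the cutoffs, which are of lower differential order and hence compact on $W^{-1,q}_D$ via Rellich--Kondrachov. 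For $\varepsilon$ sufficiently small, $\mathrm{Id} + K_\varepsilon$ is Fredholm of index zero, and together with injectivity this produces the sought inverse.

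The main obstacle will be controlling the commutator and cutoff-product terms within the mixed boundary condition setting on a weak Lipschitz domain. In particular one must verify that multiplication by $\varphi_j$ and $\psi_j$ genuinely preserves both $W^{1,q}_D$ and $W^{-1,q}_D$---this is where the Gr\"oger compatibility of the cover with $\overline{\Gamma_N}\cap\Gamma_D$ and the choice $\varphi_j,\psi_j \in C_D^\infty(\R^d)$ enter---and to establish the compactness of the commutator contributions in this reduced-smoothness setting. The restriction $q \leq 6$ reflects the Sobolev exponent $2^\ast = 6$ in $d=3$ that is needed for the product/commutator estimates to close in the right integrability range.
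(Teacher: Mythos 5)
The paper does not prove this lemma at all --- it is quoted verbatim from \cite[Lemma~6.2]{Disser2015} --- so there is no internal proof to compare against; what follows is an assessment of your argument on its own terms. Your localization-and-parametrix strategy is a legitimate route and is close in spirit to how such permanence results are proved in the literature (freeze the coefficient where $\eta$ oscillates by less than $\eps$, use the constant-multiple case, absorb the small perturbation, treat commutators as compact). The reduction to the constant case, the smallness of the coefficient-freezing term, and the compactness of the commutator contributions (they land in $L^q$, which embeds compactly into $W^{-1,q}_D$ as the adjoint of the Rellich embedding $W^{1,q'}_D \embeds_c L^{q'}$) are all sound.

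Two points need repair. First, a partition of unity on $\overline\Omega$ \emph{cannot} consist of functions in $C_D^\infty(\R^d)$: these vanish in a neighbourhood of $\Gamma_D$, so they cannot sum to $1$ there, and your identity $(-\nabla\cdot\eta\rho\nabla+1)P = \sum_j\varphi_j f + K_\eps f$ would produce $(\sum_j\varphi_j)f \neq f$. The correct statement is that the cutoffs may be arbitrary smooth functions: multiplication by $\varphi \in C_c^\infty(\R^d)$ maps $C_D^\infty(\Omega)$ into itself and hence, by density and duality, preserves both $W^{1,q}_D$ and $W^{-1,q}_D$. No compatibility of the cover with the Gr\"oger charts is needed in your construction, since you invert the \emph{global} constant-multiple operators rather than model problems. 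Second, the right parametrix identity only shows that $-\nabla\cdot\eta\rho\nabla+1$ has closed range of finite codimension; together with injectivity this gives index $\leq 0$, not index $0$, so "together with injectivity this produces the sought inverse" has a gap. The standard fix is the homotopy $\eta_t = (1-t)+t\eta$: the same $\eps$-cover works for every $t$ because $\|\eta_t - \eta_t(x_j)\|_{L^\infty(U_j)} \leq \|\eta - \eta(x_j)\|_{L^\infty(U_j)}$, each $T_t$ is semi-Fredholm and injective, the path is norm-continuous, and the index is $0$ at $t=0$ by hypothesis~\eqref{eq::topo-iso-cond}. Finally, a cosmetic remark: in your version of the argument the restriction $q\leq 6$ is not actually consumed anywhere --- your commutator terms close for every $q\geq 2$ --- so attributing it to your own product estimates is inaccurate; it originates in the cited reference's proof, which bootstraps the Lax--Milgram solution from $W^{1,2}_D\embeds L^{6}$ up to $W^{1,q}_D$.
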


Finally, for $\varrho \in L^\infty(\Gamma_N)$, we will use
$\Bkal_\varrho\coloneqq \tr_N^*\circ [\varrho\tr_N]$ to signify the
mapping associated to a Robin boundary condition. Indeed, for
$\theta \in (1-\frac1q,1]$, the following operator is well-defined and
continuous due to~\eqref{eq:trace-operator-continuous} and
Lemma~\ref{lem::trace}:
\begin{equation*}
  \blangle\Bkal_\varrho u,v\brangle \coloneqq \int_{\Gamma_N}
  \varrho \, (\tr_N u) \, (\tr_N v), \qquad \Bkal_\varrho \colon
  W^{1,q}_D(\Omega) + C(\overline\Omega) \to H^{-\theta,q}_D(\Omega).
\end{equation*}


\subsection{Problem statement and assumptions}\label{sec:probl-stat-assumpt}

Next, we state the minimal assumptions on the data
of~\eqref{eq::concrete} that will allow us to obtain global-in-time
solutions of~\eqref{eq::concrete} in the
$X_0 = W^{-1,p}_D$-setting. These global-in-time solutions will
be the starting point for the bootstrap procedure through the scale
$X_\theta = [L^q(\Omega),W^{-1,q}_D(\Omega)]_\theta$ in
Section~\ref{sec::quasilinonXtheta}. Of course, it will be necessary
to assume more for some objects---more precisely, for $s,\Fkal$ and
$u_0$---to improve regularity later on.

\begin{assumption}\label{ass::problem-data}
  Suppose that the following properties hold true:
  \begin{enumerate}
  \item[(Co)] The function $\xi\colon \R \to \R$ is locally Lipschitz
    continuous and satisfies
    $0 < \xi_{\bullet} \leq \xi(z) \leq \xi^{\bullet}$ for all
    $z \in \R$. Further, $\mu$ is a coefficient function. For the
    coefficient in the Robin boundary condition we suppose
    $\alpha \in L^\infty(I,L^\infty(\Gamma_N))$.
  \item[(Iso)] We assume that there is $p \in (d,d+1]$ such that
    \begin{align*}
      -\nabla \cdot \mu \nabla  + 1\colon W^{1,p}_D(\Omega) \to W^{-1,p}_D(\Omega)
    \end{align*}
    is a topological isomorphism and \textbf{fix this choice} of
    $p$. Let further
    $\frac{1}{s} < \frac{1}{2}-
    \frac{d}{2p}$.
  \item[($\Fkal$)]
    $\Fkal \coloneqq \Fkal_\Omega + \tr_N^*\Fkal_\Gamma \colon I
    \times (W^{-1,p}_D,W^{1,p}_D)_{1/s',s} \to W^{-1,p}_D$ is a
    locally Lipschitz Caratheodory map, that is, it is measurable with
    respect to the first variable, continuous with respect to the
    second, and for every $R > 0$ there is $L_R \in L^s(I)$ such that
    for almost all $t \in I$:
    \[
      \bigl\lVert \Fkal(t,w_1) - \Fkal(t,w_2) \bigr\rVert_{W^{-1,p}_D}
      \leq L_R(t) \lVert w_1 - w_2
      \rVert_{(W^{-1,p}_D,W^{1,p}_D)_{1/s',s}},
    \]
    whenever $w_1, w_2 \in (W^{-1,p}_D,W^{1,p}_D)_{1/s',s}$ and
    $\lVert w_i \rVert_{(W^{-1,p}_D,W^{1,p}_D)_{1/s',s}} \leq R$,
    $i=1,2$.

    Moreover, $\Fkal$ obeys an up-to-linear growth condition in the second
    variable: There is $\psi \in L^s(I)$ such that for all
    $w \in (W^{-1,p}_D,W^{1,p}_D)_{1/s',s}$ and almost all $t \in I$,
    \[
      \bigl\lVert \Fkal(t,w) \bigr\rVert_{W^{-1,p}_D} \leq \psi(t)
      \bigl(1 + \lVert w \rVert_{C(\overline \Omega)}\bigr).
    \]
  \item[(IV)] The initial condition satisfies
    $u_0 \in (W^{-1,p}_D, W^{1,p}_D)_{1/s',s}$.
  \end{enumerate}
\end{assumption}

The setting described in Assumptions~\ref{ass::domain-geometry}
and~\ref{ass::problem-data} is similar to the one considered
in~\cite[Section~5]{Meinlschmidt2016}. The major difference is that we
now allow the function $\Fkal$ to grow up to linearly with respect to
the function variable instead of imposing a quite harsh global
boundedness assumption as done in~\cite{Meinlschmidt2016}. 

Still, Assumptions~\ref{ass::domain-geometry}
and~\ref{ass::problem-data}, in particular~(Iso), impose non-trivial
conditions on the considered setting. A few comments are thus
in order.

\begin{remark}\label{rem::assumptdisc_1}
  \begin{enumerate}[(1)]
  \item If $\Omega$ is a bounded domain with Lipschitz boundary
    (strong Lipschitz domain) and $\Gamma_N = \emptyset$ or
    $\Gamma_N = \partial \Omega$ and $\mu$ is a symmetric and
    uniformly continuous coefficient function, then there will be
    $p>3$ such that~(Iso) in Assumption~\ref{ass::problem-data} is
    satisfied, see~\cite[Theorem 3.12, Remark 3.17]{Elschner2007}.  In
    this sense, Assumptions~\ref{ass::domain-geometry}
    and~\ref{ass::problem-data} cover the classical ``regular''
    setting of strong Lipschitz domains in spatial dimensions $d=2,3$
    with pure Dirichlet or Neumann boundary conditions and a
    symmetric, uniformly continuous coefficient function.  Further,
    from the pioneering work of Gr\"oger~\cite{Groeger1989} we find
    that Assumption~\ref{ass::problem-data}~(Iso) is \emph{always}
    satisfied for some $p > 2$ within our setting. In fact, this is
    also true under much more general assumptions on the geometry of
    the domain, see~\cite{Dintelmann2016}. 
  \item It is well known that in the presence of mixed boundary
    conditions, the isomorphism property in
    Assumption~\ref{ass::problem-data}~(Iso) can only be expected to
    hold for some $p < 4$ in general due to the Shamir
    counterexample~\cite[Introduction]{Shamir1968}. Moreover, if
    Assumption~\ref{ass::problem-data}~(Iso) is valid, then it is also
    valid for all $q \in [2,p]$ due to interpolation and the
    Lax-Milgram Lemma. In this sense, the upper bound of $d+1$ in
    (Iso) should not be considered as critical. We pose it for
    technical reasons to avoid some case
    distinctions. 
    Nevertheless, the
    authors of~\cite{Disser2015} establish a rich zoo of real-world
    constellations such that Assumption~\ref{ass::problem-data}~(Iso)
    is satisfied in dimension $d=3$ within the constraints of the
    other assumptions. 
  \end{enumerate}
\end{remark}

\begin{remark}\label{rem::assumptdisc_2}
  We point out that the growth
    condition in~Assumption~\ref{ass::problem-data}~($\Fkal$) is
    indeed well-defined since
    \begin{equation*}
      \bigl(W^{-1,p}_D,W^{1,p}_D\bigr)_{1/s',s} \embeds C(\overline\Omega).
    \end{equation*}
    In fact, choose $\frac12+\frac{d}{2p} < \theta < 1-\frac1s$. Then
    $(W^{-1,p}_D,W^{1,p}_D)_{1/s',s} \embeds
    [W^{-1,p}_D,W^{1,p}_D]_{\theta}$ and
    Corollary~\ref{cor::interpol-cont-embed} strikes. Such a growth
    condition is the standard requirement in the analysis of abstract
    semilinear equations, see for
    example~\cite[Corollary~3.3.5]{Henry1981}.  

    Besides rather obvious choices for $u\mapsto\Fkal(u)$ such as a
    ``constant'' function  $f \in L^s(I,W^{-1,p}_D)$ and nonlinear
    Nemytskii operators induced by suitable real functions, we point
    to two particular possible incarnations of $\Fkal$:
    \begin{itemize}
    \item Fix $g \in L^s(I,W^{1,p}_D)$---possibly coming from some
      other differential equation---and set
      $\Fkal(t,w) \coloneqq \nabla \cdot w\mu \nabla g(t)$.  Such a
      drift-type term arises e.g.\ in the modelling of semiconductors and
      has been considered within a semilinear parabolic PDE
      in~\cite{Meinlschmidt2021}.
    \item We can also consider nonlocal-in-space interactions in $\Fkal$ such as
      for example
      $ \Fkal(t,w) \coloneqq \sigma\left( \int_\Omega k(t)w \dd x\right)$,
      where $\sigma\colon \R \to \R$ is bounded and Lipschitz and $k$
      is a suitable kernel with $k(t) \in L^1(\Omega)$.
    \end{itemize}
\end{remark}

\begin{remark}\label{rem::assumptdisc_3}
  The assumption that $\xi$ is uniformly bounded from below is necessary to ensure uniform
  ellipticity of the quasilinear differential operator and cannot be avoided easily. The upper
  bound on $\xi$, however, is not strictly necessary and can be removed utilizing a classical
  Stampacchia argument, cf.~\cite[Theorem~2.1]{Casas2018}. Such an argument does not rely on
  the stronger regularity assumptions posed in the cited work. Similarly, it is also possible
  to discuss nonlinear functions $\Fkal$ that are monotone with respect to the second variable
  but not necessarily of linear growth such as the classical $\Fkal(t,w) = w^3$. However, in
  order to keep the discussion more transparent we decided not to include these technical
  modifications.
\end{remark}

\subsubsection*{Abstract problem formulation}

We next give an abstract but precise formulation
of~\eqref{eq::concrete}. The goal is to find a \emph{global-in-time} solution
$u \in \W^{1,s}(I,(W^{-1,p}_D,W^{1,p}_D))$ to
\begin{align}\label{eq::main-problem-abstract}
  \left.\begin{aligned}
      \partial u - \nabla \cdot \xi(u) \mu \nabla u + u + \Bkal_\alpha u &= \Fkal(u) && \text{in } L^s(I,W^{-1,p}_D),\\
      u(0) &= u_0 &&\text{in } (W^{-1,p}_D, W^{1,p}_D)_{1/s',s} \\ 
    \end{aligned} \quad \right\}
\end{align}
and to give sharp sufficient conditions along the scale
$[W^{-1,p}_D,L^p]_\theta$ for when the solution is in fact more
regular. We start in the $W^{-1,p}_D$-setting ($\theta=0$) because we
can in fact prove existence and uniqueness of a global-in-time
solution there, basing on uniform H\"older estimates for nonautonomous
parabolic evolution equations established in~\cite{Meinlschmidt2016}.

Herein, the Neumann/Robin boundary conditions of~\eqref{eq::concrete}
have been absorbed into the distributional right hand side
of~\eqref{eq::main-problem-abstract} and $\Bkal_\alpha u$,
respectively, cf.\ also
Assumption~\ref{ass::problem-data}~($\Fkal$). The Dirichlet boundary
conditions are prescribed by the underlying function space
$W^{1,p}_D$. The above formulation is self-consistent in
$L^s(I,W^{-1,p}_D)$ for $u \in \W^{1,s}(I,(W^{-1,p}_D,W^{1,p}_D))$ due
to the assumptions on the data in Assumption~\ref{ass::problem-data},
cf.\ also Lemma~\ref{lem::max-reg-spaces-embed} and
Corollary~\ref{cor::interpol-cont-embed}.

As we ultimately plan to establish better regularity for $u$ than
claimed above, we introduce the following convention: Let subspaces
$X\embeds W^{-1,p}_D$ and $Y\embeds W^{1,p}_D$ with $Y \embeds_d X$
and some $r \in (1,\infty)$ be given. Suppose that the solution $u$
to~\eqref{eq::main-problem-abstract} in fact satisfies
$u \in \W^{1,r}(I,(X,Y))$, and that~\eqref{eq::main-problem-abstract}
holds true in $L^r(I,X) \times (X,Y)_{1/r',r}$. Then we say that
\emph{$u$ solves~\eqref{eq::main-problem-abstract} (also) on
  $X$}. Note that this solution will then necessarily unique, since the
one for $X=W^{-1,p}_D$ will be.

\begin{remark}
  \label{rem::inhomo-neumann-gap}
  \begin{enumerate}[(1)]
  \item In view of Lemma~\ref{lem::trace} on the adjoint trace
    operator and the scale $X_\theta = [W^{-1,p}_D,L^p]_\theta$, there
    is a natural threshold at $\theta = 1/p$ above which $X_\theta$
    cannot accommodate any more distributional objects such as
    $\tr_N^* g$ arising from inhomogeneous Neumann/Robin boundary data
    $g$. This also applies to $\Fkal_\Gamma$.
    
  \item In fact, we will also allow for a Robin boundary condition in
    the abstract equation~\eqref{eq::main-problem-abstract} in
    $[W^{-1,p}_D,L^p]_\theta$ \emph{only} for $\theta<1/p$. There, the
    boundary condition is enforced by adding the appropriate term
    $\Bkal_\alpha$ and we can view $\Bkal_\alpha$ as a perturbation of
    the main part of the differential operator
    $-\nabla\cdot\mu\nabla + 1$. This clearly does not work any more
    for $\theta > 1/p$ since in this (stronger) setting, the boundary
    condition is built into the differential operator in a strong
    sense. (This is a feature, not a bug.) However, it would be most
    desirable to also be able to incorporate a---then:
    homogeneous---Robin condition there. The problem is that we do
    lack an analogous result regarding the Kato square root property
    as in Proposition~\ref{prop::kato-square-root} for this case with
    $-\nabla\cdot\mu\nabla + 1 + \Bkal_\alpha$. Since our later
    considerations, in particular in
    Section~\ref{subsec::interpolation-spaces}, are strongly based on
    Proposition~\ref{prop::kato-square-root}, we are unable to
    accommodate $\alpha \neq 0$ at the moment. Any improvement in the
    square root property for the operator including $\Bkal_\alpha$
    would transfer to the present setting immediately.
  \end{enumerate}
\end{remark}

\section{Global-in-time solutions and regularity}
\label{sec::sec2}

In this section we provide existence and uniqueness of global-in-time
solutions in the $W^{-1,p}_D$-$W^{1,p}$-setting
for~\eqref{eq::main-problem-abstract}. This is the fundamental result
on which our further considerations are based on since it delivers the
global solution whose regularity we then can bootstrap. We further
augment this result by briefly reviewing existing results
on improved regularity in $X = H^{-\zeta,p}_D(\Omega)$ for $\zeta$
close to $1$ and in $X = L^{p/2}(\Omega)$,

\subsection{Existence of solutions}
\label{subsec::exonW1p}
It follows our first main result that
extends~\cite[Theorem~5.3]{Meinlschmidt2016}. The latter was proven
using a ``global'' Schauder fixed-point argument based on the other
main result in the paper, uniform Hölder
estimates~\cite[Theorem~2.13]{Meinlschmidt2016}, thereby requiring a
rather strong global boundedness property for $\Fkal$. Here, we rely
on the same uniform Hölder estimates, but rather use them to disprove
finite-time blowup of a local-in-time solution for which we can
tolerate up to linear growth in $\Fkal$.

In all what follows, we take
Assumptions~\ref{ass::domain-geometry} and~\ref{ass::problem-data} for
granted.

\begin{theorem}\label{thm::exonW1p}
  There is a unique global-in-time solution
  $u \in \W^{1,s}(I,(W^{-1,p}_D, W^{1,p}_D))$
  to~\eqref{eq::main-problem-abstract}.
\end{theorem}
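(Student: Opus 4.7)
The strategy is to combine local-in-time well-posedness via a Banach fixed-point argument with an a priori bound on the solution extracted from the uniform Hölder estimates of~\cite[Theorem~2.13]{Meinlschmidt2016}, and to use this bound to exclude finite-time blowup. This upgrades~\cite[Theorem~5.3]{Meinlschmidt2016}, which relied on global boundedness of $\Fkal$, to the present setting where only an up-to-linear growth is imposed.

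For local existence on an interval $[0,\tau]$, I would pick $R>\|u_0\|_{(W^{-1,p}_D,W^{1,p}_D)_{1/s',s}}$ and consider, for each $v$ in the closed ball $B_R$ of radius $R$ around the constant function $u_0$ in $C([0,\tau],(W^{-1,p}_D,W^{1,p}_D)_{1/s',s})$, the linearized problem
\[
  \partial_t w + \bigl[-\nabla\cdot\xi(v)\mu\nabla + 1 + \Bkal_\alpha\bigr]w = \Fkal(v), \qquad w(0)=u_0.
\]
Since $\xi(v)\in C(\overline{Q_\tau})$ is uniformly pinched between $\xi_\bullet$ and $\xi^\bullet$, Lemma~\ref{lem::iso-invariance-uniform-cont} yields the pointwise domain $W^{1,p}_D$, Corollary~\ref{cor::max-reg-interpolation-space} supplies pointwise maximal parabolic regularity, and Proposition~\ref{prop::pruess-schnaubelt-cont-MPR} lifts this to nonautonomous maximal parabolic regularity on $L^s(0,\tau;W^{-1,p}_D)$. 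Hence a unique $w=\Phi(v)\in\W^{1,s}(0,\tau;(W^{-1,p}_D,W^{1,p}_D))$ exists. Invoking the local Lipschitz continuity of $\xi$ and of $\Fkal$ together with the embedding $\W^{1,s}\embeds C([0,\tau],(W^{-1,p}_D,W^{1,p}_D)_{1/s',s})$ from Lemma~\ref{lem::max-reg-spaces-embed}, one verifies that $\Phi$ self-maps $B_R$ and is contractive for $\tau$ small enough (the maximal regularity constant is uniform because the coefficient bounds are); this produces a unique local solution.

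Standard concatenation yields a maximal existence time $T^{*}\in(0,T]$, and I would argue by contradiction that $T^{*}=T$. For any $\tau<T^{*}$, the local solution satisfies a \emph{linear} nonautonomous problem with frozen coefficient $\xi(u)\in C(\overline{Q_\tau})$ lying in $[\xi_\bullet,\xi^\bullet]$, so the uniform Hölder estimate of~\cite[Theorem~2.13]{Meinlschmidt2016} delivers, with constant independent of $\tau$ and of the specific frozen coefficient,
\[
  \|u\|_{C(\overline{Q_\tau})} \leq C\bigl(\|\Fkal(u)\|_{L^s(0,\tau;W^{-1,p}_D)} + \|u_0\|_{(W^{-1,p}_D,W^{1,p}_D)_{1/s',s}}\bigr).
\]
Feeding back the growth bound $\|\Fkal(t,u(t))\|_{W^{-1,p}_D}\leq\psi(t)(1+\|u(t)\|_{C(\overline\Omega)})$ and writing $g(\tau):=\|u\|_{C(\overline{Q_\tau})}$, one obtains
\[
  g(\tau) \leq C\|u_0\| + C\|\psi\|_{L^s(0,\tau)}\bigl(1+g(\tau)\bigr).
\]
Because $\psi\in L^s(I)$, the integral $\tau\mapsto\|\psi\|_{L^s(0,\tau)}$ is uniformly continuous, so $[0,T^{*}]$ can be split into finitely many subintervals on each of which $C\|\psi\|_{L^s}\leq 1/2$; propagating the estimate across the subintervals (using the previous endpoint as a new initial condition and the maximal regularity embedding into $C$) yields a uniform bound on $g(T^{*})$. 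Linear growth then bounds $\|\Fkal(u)\|_{L^s(0,T^{*};W^{-1,p}_D)}$, and nonautonomous maximal regularity bounds $\|u\|_{\W^{1,s}(0,T^{*};(W^{-1,p}_D,W^{1,p}_D))}$. Lemma~\ref{lem::max-reg-spaces-embed} then furnishes a limit of $u(t)$ in $(W^{-1,p}_D,W^{1,p}_D)_{1/s',s}$ as $t\to T^{*-}$, and the local existence argument restarts from this limit, extending $u$ strictly past $T^{*}$ and contradicting maximality. Uniqueness on the full interval follows from the uniqueness in the local fixed-point step combined with a connectedness argument on $I$. The main obstacle is the self-referential structure of the a priori bound—uniform Hölder controls $u$ through $\Fkal(u)$, while linear growth controls $\Fkal(u)$ only through $u$—which is closed only because $\|\psi\|_{L^s}$ is small on short intervals.
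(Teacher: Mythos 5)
Your proposal follows essentially the same strategy as the paper: local-in-time solvability (the paper packages your contraction argument by citing Pr\"uss' theorem), the uniform H\"older estimate of~\cite[Theorem~2.13]{Meinlschmidt2016} for the frozen coefficient $\xi(u)$, the linear growth of $\Fkal$, and smallness of $\lVert\psi\rVert_{L^s}$ on short time intervals to absorb and rule out blowup. However, the decisive last step has a genuine gap. From the uniform bound on $\sup_{\tau<T^{*}}\lVert u\rVert_{C(\overline{Q_\tau})}$ you conclude that ``nonautonomous maximal regularity bounds $\lVert u\rVert_{\W^{1,s}(0,T^{*};(W^{-1,p}_D,W^{1,p}_D))}$'' and hence that $u(t)$ has a limit in the trace space. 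But the only nonautonomous maximal regularity tool available here, Proposition~\ref{prop::pruess-schnaubelt-cont-MPR}, requires $t\mapsto-\nabla\cdot\xi(u(t))\mu\nabla+1$ to be continuous on the \emph{closed} interval $[0,T^{*}]$ with values in $\Lkal(W^{1,p}_D,W^{-1,p}_D)$, i.e.\ continuity of $u$ in $C(\overline\Omega)$ up to $t=T^{*}$ --- which is precisely what the blowup alternative puts in question, so as written the step is circular; maximal regularity does not hold for merely measurable-in-time coefficients, and a sup-norm bound alone does not yield the required continuity at $T^{*}$. The paper closes exactly this loop: the uniform estimate is a $C^{0,\gamma}(\overline Q)$ bound uniform in $T_\bullet<T^\bullet$ (see~\eqref{eq:global-proof-limsup}), an Arzel\`a--Ascoli argument extends $u$ continuously to $t=T^\bullet$, and only then is Proposition~\ref{prop::pruess-schnaubelt-cont-MPR} applied to the linear problem~\eqref{eq::splitting} on the closed interval, whose solution has a trace at $T^\bullet$ by Lemma~\ref{lem::max-reg-spaces-embed}, contradicting~\eqref{eq:blowup-charact}. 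You have the H\"older (not just sup) bound at your disposal; you must use it to extend the coefficient continuously before invoking maximal regularity.

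Two further points need repair. First, the Robin term is handled inconsistently: in the local step you place $\Bkal_{\alpha(t)}$ inside the operator, but $\alpha$ is only $L^\infty$ in time, so $t\mapsto\Bkal_{\alpha(t)}$ is not continuous and Proposition~\ref{prop::pruess-schnaubelt-cont-MPR} (as well as the cited maximal regularity and domain-invariance results, stated for $-\nabla\cdot\rho\nabla+1$ without boundary term) does not apply; the paper instead moves $\Bkal_\alpha u$ to the right-hand side. Consistently, in your a priori estimate $\Bkal_\alpha u$ must then reappear as a right-hand-side contribution of size $C_\alpha(\tau,T_\bullet)\lVert u\rVert_{C^{0,\gamma}}$ with $C_\alpha\sim|T_\bullet-\tau|^{1/s}$ and be absorbed together with the $\psi$-term on short intervals --- your subdivision can accommodate this, but you dropped the term entirely. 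Second, the constant $C_{\partial+A}$ of~\cite[Theorem~2.13]{Meinlschmidt2016} is used in the paper for the \emph{zero initial value} solution operator; nonzero data are handled by splitting off $w(t)=e^{-(t-\tau)\Delta}u(\tau)$ before applying the estimate, rather than assuming an inequality with $\lVert u_0\rVert$ on the right as you do, and the claim in your local step that the maximal regularity constant is uniform over $v\in B_R$ merely because the coefficient bounds are uniform is not justified either (this uniformity is exactly what Pr\"uss' quasilinear theorem, or the uniform H\"older estimate, is needed for). These two issues are fixable, but the blowup-exclusion step as proposed would fail without the continuity-extension argument.
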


Before we start with the proof, let us note that there is $\beta>0$
such that
\begin{equation}
  \W^{1,s}(I,(W^{-1,p}_D, W^{1,p}_D)) \embeds_c
  C^{0,\beta}(\overline I,C^{0,\beta}(\overline\Omega)),\label{eq:maxreg-is-hoelder}
\end{equation}
see Lemma~\ref{lem::max-reg-spaces-embed} and
Corollary~\ref{cor::interpol-cont-embed}. 
 
\begin{proof} We argue in a quite concise way how to obtain a
  local-in-time solution. Similar reasoning and
  associated arguments can be found
  in~\cite[Section~5]{Meinlschmidt2016}
  or~\cite{Meinlschmidt2017_1,Dintelmann2009}.  From
  Corollary~\ref{cor::interpol-cont-embed}, we have
  $(W^{-1,p}_D,W^{1,p}_D)_{1/s',s} \embeds C(\overline\Omega)$. This
  has several consequences: For every $w$ in the interpolation space,
  the operator $-\nabla\cdot\xi(w)\mu\nabla + 1$ satisfies maximal
  parabolic regularity in $W^{-1,p}_D$ with domain $W^{1,p}_D$ due to
  Proposition~\ref{prop::divergence-operator-properties-no-iso-yet}
  and Lemma~\ref{lem::iso-invariance-uniform-cont}. Further,
  $w \mapsto -\nabla\cdot\xi(w)\mu\nabla + 1$ is Lipschitz continuous
  on bounded sets in $(W^{-1,p}_D,W^{1,p}_D)_{1/s',s}$ with values in
  $\Lkal(W^{1,p}_D,W^{-1,p}_D)$, and $\Bkal_{\alpha(t)}$
  gives rise to a continuous linear operator
  $(W^{-1,p}_D,W^{1,p}_D)_{1/s',s} \to W^{-1,p}_D$ for almost every
  $t \in I$, recall Lemma~\ref{lem::trace}.

  In~\eqref{eq::main-problem-abstract}, we transfer the Robin operator
  $\Bkal_\alpha$ to the right-hand side
  $\overline{\Fkal}(u) \coloneqq \Fkal(u) - \Bkal_\alpha u$. From the foregoing considerations we infer that the
  assumptions of the seminal theorem of
  Pr\"uss~\cite[Theorem~3.1]{Pruess2002} are satisfied for the
  resulting equation, such that~\eqref{eq::main-problem-abstract}
  admits a unique maximal local-in-time solution $u$ in the maximal
  regularity class. More precisely, there exists $T^\bullet \in (0,T)$
  such that equation~\eqref{eq::main-problem-abstract} admits a unique
  solution $u$ on $(0,T^\bullet)$ and for every
  $T_\bullet \in (0,T^\bullet)$ we have
  $u \in \W^{1,s}(0,T_\bullet,(W^{-1,p}_D, W^{1,p}_D))$. Thereby,
  $T^\bullet$ is characterized by the property that
  \begin{equation}
    \label{eq:blowup-charact}
    \lim_{t\nearrow T^\bullet}
    u(t) \quad
    \text{does not exist in}~(W^{-1,p}_D,W^{1,p}_D)_{1/s',s}.
  \end{equation}
  We show that in fact $T^\bullet = T$ with
  $u \in \W^{1,s}(0,T,(W^{-1,p}_D, W^{1,p}_D))$, using the linear
  growth assumption on $\Fkal$ posed in
  Assumption~\ref{ass::problem-data}~($\Fkal$) to
  disprove~\eqref{eq:blowup-charact}

  To this end, let $0 \leq \tau < T^\bullet$ be arbitrary for now---to
  be fixed later---and let $T_\bullet \in (\tau,T^\bullet)$. Let
  further $\Ckal(\rho_\bullet,\rho^\bullet)$ be the set of all
  measurable nonautonomous coefficient functions
  $\rho \colon I\times\Omega \to \R^{d\times d}$ which are bounded in
  $L^\infty(I\times\Omega;\R^{d\times d})$ by $\rho^\bullet$ and which
  are uniformly coercive almost everywhere on $I \times \Omega$ with
  coercivity constant $\rho_\bullet$. Then, by the main result
  in~\cite[Theorem~2.13]{Meinlschmidt2016}, for some $\gamma > 0$,
  w.l.o.g. $\gamma < \beta$ with $\beta$ as
  in~\eqref{eq:maxreg-is-hoelder}, the number
  \begin{equation*}
    C_{\partial+A}(\tau,T_\bullet)
    \coloneqq \sup_{\rho \in \Ckal(\xi_\bullet\mu_\bullet,\xi^\bullet\mu^\bullet)}\bigl\|(\partial - \nabla\cdot\rho\nabla +
    1)^{-1}\bigr\|_{\Lkal(L^s(\tau,T_\bullet;W^{-1,p}_D);C^{0,\gamma}_0([\tau,T_\bullet]
      \times \overline\Omega))},
  \end{equation*}
  is well-defined and finite. Hereby, we denote by
  $(\partial - \nabla \cdot \rho \nabla + 1)^{-1}$ the solution
  operator $f \mapsto y$ of the nonautonomous problem
  \begin{align}\label{eq::lineq}
    \left.\begin{aligned}
        \partial y - \nabla \cdot \rho \nabla y + y &= f &&\text{in }
        L^s(I,W^{-1,p}_D), \\ 
        y(\tau) &= 0  &&\text{in } (W^{-1,p}_D,W^{1,p}_D)_{1/s',s}, 
      \end{aligned} \quad \right\}
  \end{align}
  and the index $0$ refers to zero initial condition at time $\tau$
  for functions in
  $C^{0,\gamma}_0([\tau,T_\bullet] \times \overline\Omega)$. It is
  easy to see that we must have
  \begin{equation*}
    C_{\partial+A}(\tau,T_\bullet) \leq C_{\partial+A}(0,T) =: C_{\partial+A}.
  \end{equation*}
  Indeed, every problem instance of~\eqref{eq::lineq} on
  $(\tau,T_\bullet)$ can be embedded into a problem instance
  of~\eqref{eq::lineq} on $(0,T)$ with the same input data size by
  extending and shifting the objects on $(\tau,T_\bullet)$ by zero.
  Then the definition of the operator norm gives the estimate. This
  equicontinuity of the parabolic solution operator in the coercivity-
  and boundedness constants of the coefficient function will be the
  crucial element of the proof. Essentially, it will allow us to
  reason as in the semilinear case.

  We need another global estimate, this time for the Robin operator
  whose mapping properties for almost every $t \in I$ were already
  mentioned above. Since $\alpha$ is assumed to be essentially bounded
  in time, we find (recall Lemma~\ref{lem::trace})
  \begin{multline*}
    \bigl\lVert\Bkal_\alpha\bigr\rVert_{\Lkal(C^{0,\gamma}([\tau,T_\bullet] \times
      \overline\Omega),L^s(\tau,T_\bullet;W^{-1,p}_D))} \\ \leq
    |T_\bullet-\tau|^{\frac1s} \lVert
    \tr\rVert_{\Lkal(W^{1,p'}_D,L^{p'}(\Gamma_N))}
    \lVert\alpha\rVert_{L^\infty(I,L^\infty(\Gamma_N))} \eqqcolon
    C_\alpha(\tau,T_\bullet).
  \end{multline*}

  Now, in order to use the foregoing estimate with $C_{\partial + A}$,
  we split off the ``initial value'' $u(\tau)$ of the maximal solution
  $u$ starting from $\tau$ via
  $w(t) \coloneqq e^{-(t-\tau)\Delta}u(\tau)$. We have
  $w \in \W^{1,s}(\tau,T,(W^{-1,p}_D, W^{1,p}_D))$
  by~\cite[Proposition~III.4.10.2]{Amann1995}. Set $v \coloneqq u-w$
  to obtain $v(\tau) = 0$. Then $v  \in
  \W^{1,s}(\tau,T_\bullet,(W^{-1,p}_D, W^{1,p}_D))$ satisfies 
  \begin{equation*}
    \partial v - \nabla \cdot\xi(u)\mu\nabla v + v = \Fkal(u) -
    \Bkal_\alpha u -
    \partial w + \nabla \cdot\xi(u)\mu\nabla w - w, \quad v(\tau) = 0
  \end{equation*}
  in $L^s(\tau,T_\bullet;W^{-1,p}_D)$. So, using the definition of
  $C_{\partial + A}$, the foregoing estimates, the
  embedding~\eqref{eq:maxreg-is-hoelder}, and the linear growth
  assumption for $\Fkal$ as in
  Assumption~\ref{ass::problem-data}~($\Fkal$), we obtain:
  \begin{multline*}
    \|v\|_{C_0^{0,\gamma}([\tau,T_\bullet]\times \overline\Omega)}\\
    \leq C_{\partial +
      A}\left[\bigl(\|\psi\|_{L^s(\tau,T_\bullet)}+C_{\alpha}(\tau,T_\bullet)\bigr)
      \bigl(\|v\|_{C^{0,\gamma}([\tau,T_\bullet] \times
        \overline\Omega)}+\|w\|_{C^{0,\gamma}([\tau,T_\bullet]
        \times \overline\Omega)}\bigr)\right. \\
    \left. +~\bigl(1+\xi^\bullet\mu^\bullet\bigr) \|w\|_{
        \W^{1,s}(\tau,T_\bullet,(W^{-1,p}_D, W^{1,p}_D))} \right].
  \end{multline*}
  We now choose $\tau$ to be
  \begin{equation*}
    \tau \coloneqq \inf\Bigl\{ s \in (0,T^\bullet) \colon
    C_{\partial+A}\bigl(\|\psi\|_{L^s(s,T^\bullet)}+C_{\alpha}(s,T^\bullet)\bigr)
    \leq \frac12
    \Bigr\}. 
  \end{equation*}
  Note that $\tau < T^\bullet$. In particular, with the chosen $\tau$,
  we have
  \begin{equation*}
    C_{\partial+A}\bigl(\|\psi\|_{L^s(\tau,T_\bullet)}+C_{\alpha}(\tau,T_\bullet)\bigr)
    \leq \frac12
  \end{equation*}
  for \emph{all} $T_\bullet \in (\tau,T^\bullet]$. Thus we are able to
  absorb
  $\|v\|_{C^{0,\gamma}_0([\tau,T_\bullet] \times \overline\Omega)}$ on
  the left in the last estimate. Hence, for any
  $T_\bullet \in (\tau,T^\bullet)$
  \begin{multline*}
    \|v\|_{C^\gamma_0([\tau,T_\bullet] \times \overline\Omega)} \leq
    2C_{\partial+A}
    \left[\bigl(\|\psi\|_{L^s(0,T)}+C_{\alpha}(0,T)\bigr)\|w\|_{C^{0,\gamma}([\tau,T]
        \times \overline\Omega)}\right.  \\
    \left. +~\bigl(1+\xi^\bullet\mu^\bullet\bigr) \|w\|_{
        \W^{1,s}(0,T,(W^{-1,p}_D, W^{1,p}_D))} \right].
  \end{multline*}
  The right-hand side is \emph{independent} of $T_\bullet$. Thus,
  denoting the bound by $D$,
  \begin{equation}
    \limsup_{T_\bullet \nearrow T^\bullet}
    \|v\|_{C^{0,\gamma}([\tau,T_\bullet] \times \overline\Omega)} \leq
    D < \infty.\label{eq:global-proof-limsup}
  \end{equation}
  This implies that
  $v \in C([\tau,T^\bullet] \times \overline\Omega)$: Let
  $t_k \nearrow T^\bullet$. Then $(v(t_k))$ is bounded in
  $C^{0,\gamma}(\overline\Omega)$ by~\eqref{eq:global-proof-limsup}
  and due to Arzel\`{a}-Ascoli, there exists a subsequence (with the
  same name) such that $v(t_k) \to V$ in $C(\overline\Omega)$. Set
  $v(T^\bullet) \coloneqq V$. Let $s_k \nearrow T^\bullet$ be
  arbitrary. By~\eqref{eq:global-proof-limsup}, we find
  \begin{align*}
    \lVert v(T^\bullet) -
    v(s_k)\rVert_{C(\overline\Omega)} & \leq \lVert v(T^\bullet) -
    v(t_k)\rVert_{C(\overline\Omega)} + \lVert v(t_k) -
    v(s_k)\rVert_{C(\overline\Omega)} \\ & \leq \lVert v(T^\bullet) -
    v(t_k)\rVert_{C(\overline\Omega)} + D \lvert t_k
    -s_k\rvert^\gamma \quad \longrightarrow \quad 0,
  \end{align*}
  so $v$ is continuous at $T^\bullet$ and
  $v \in C([\tau,T^\bullet] \times \overline\Omega)$.
  
  Now finally consider the equation
  \begin{equation}\label{eq::splitting}
    \partial z - \nabla\cdot\xi(v+w)\mu\nabla z + z +
    \Bkal_\alpha z = \Fkal(v+w),
    \quad z(\tau) = u(\tau).
  \end{equation}
  Note that
  $t\mapsto \partial - \nabla\cdot\xi(v(t)+w(t))\mu\nabla + 1$ is
  continuous on $[\tau,T^\bullet]$ as an operator from the maximal
  regularity space into $L^s(\tau,T^\bullet,W^{-1,p}_D)$. Thus,
  $-\nabla\cdot\xi(v+w)\mu\nabla + 1$ satisfies nonautonomous maximal
  parabolic regularity on $L^s(\tau,T^\bullet,W^{-1,p}_D)$ with the
  domain $W^{1,p}_D$ via
  Propositions~\ref{prop::pruess-schnaubelt-cont-MPR}
  and~\ref{prop::divergence-operator-properties-no-iso-yet} due
  to~\eqref{eq:maxreg-is-hoelder} and the permanence principle in
  Lemma~\ref{lem::iso-invariance-uniform-cont}. It follows
  that~\eqref{eq::splitting} admits a unique solution
  $z \in \W^{1,s}(\tau,T^\bullet,(W^{-1,p}_D, W^{1,p}_D))$. In
  particular, the limit
  $\lim_{t \nearrow T^\bullet} z(t) = z(T^\bullet)$ exists in
  $(W^{-1,p}_D,W^{1,p}_D)_{1/s',s}$ by
  Lemma~\ref{lem::max-reg-spaces-embed}. But by construction $v+w = u$
  and thus $z = u$ on $[\tau,T^\bullet)$, so the blowup
  criterion~\eqref{eq:blowup-charact} must have been false. Thus $u$
  is a global solution.
\end{proof}

Note that in the foregoing proof~\eqref{eq::splitting} is well
defined, although $\Fkal$ is defined only on
$(W^{-1,p}_D, W^{1,p}_D)_{1/s',s}$, instead of $C(\overline \Omega)$
as in~\cite{Meinlschmidt2016}. This is because the right hand side
of~\eqref{eq::splitting} is still well-defined in
$L^s(\tau,T^\bullet;W^{-1,p}_D)$ by the growth condition in
Assumption~\ref{ass::problem-data}~($\Fkal$)
and~\eqref{eq:global-proof-limsup}.

\subsection{First results on improved regularity}\label{sec::first-results-impr}

From the previous existence result, one can hope to bootstrap the
regularity of $u$ provided that the data in the problem is
compatible. We show how such an argument could be made. To this end,
let us first cite the following result, adapted to our setting:

\begin{theorem}[{\cite{Bonifacius2018}, Theorem~3.20}]
  \label{thm::exonH-zetap}
  Let $u$ be the unique solution to~\eqref{eq::main-problem-abstract}
  from Theorem~\ref{thm::exonW1p}.  Let
  $\zeta \in (\frac{d}p,1)$ and
   suppose that $\frac2s < (\zeta-\frac{d}p)\wedge (1-\zeta)$.
  Assume further that
  \begin{equation*}
    u_0 \in (H^{-\zeta,p}_D, \Dom_{H^{-\zeta,p}_D}(-\nabla \cdot \mu
    \nabla+1))_{1/s',s} \quad \text{and that} \quad \Fkal(u) \in L^s(I,H^{-\zeta,p}_D).
  \end{equation*}
 Then in fact
  $u \in \W^{1,s}(I,(H^{-\zeta,p}_D,\Dom_{H^{-\zeta,p}_D}(-\nabla
  \cdot \mu \nabla+1)))$ and $u$ is the unique solution
  to~\eqref{eq::main-problem-abstract} on $H^{-\zeta,p}_D$.
\end{theorem}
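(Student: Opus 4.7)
The plan is to bootstrap by viewing~\eqref{eq::main-problem-abstract} as a \emph{linear} nonautonomous equation with the now-fixed coefficient $\eta \coloneqq \xi(u)$, then apply the nonautonomous maximal parabolic regularity result of Theorem~\ref{thm::mpronxtheta} on the finer space $H^{-\zeta,p}_D$, and finally identify the resulting solution with $u$ via the uniqueness statement of Theorem~\ref{thm::exonW1p}. Under the identification $H^{-\zeta,p}_D = [W^{-1,p}_D, L^p]_{1-\zeta}$ from the introduction, this corresponds to the scale position $\theta_0 \coloneqq 1-\zeta$. Note that $p \in (d,d+1]$ together with $\zeta > \tfrac{d}p$ forces $\zeta > \tfrac{p-1}p = 1-\tfrac1p$, hence $\theta_0 < \tfrac1p$, which via Lemma~\ref{lem::trace} keeps $\Bkal_\alpha$ a bounded map into $H^{-\zeta,p}_D$ and thus a harmless lower-order perturbation.

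The crucial preliminary step is to quantify the spatial Hölder continuity of $\eta$. Combining Lemma~\ref{lem::max-reg-spaces-embed} with Corollary~\ref{cor::interpol-cont-embed}, for every $\theta \in (\tfrac12 + \tfrac{d}{2p}, 1-\tfrac1s)$ and $\tau \in (\tfrac1s, 1-\theta)$,
\begin{equation*}
  \W^{1,s}\bigl(I,(W^{-1,p}_D,W^{1,p}_D)\bigr) \embeds C^{0,\tau-\frac{1}{s}}\bigl(\overline I, C^{0,2\theta-1-\frac{d}{p}}(\overline\Omega)\bigr).
\end{equation*}
The assumption $\tfrac2s < \zeta - \tfrac{d}p$ is precisely what permits choosing $\theta$ so that $2\theta - 1 - \tfrac{d}p > 1 - \zeta = \theta_0$. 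By the local Lipschitz property of $\xi$ and boundedness of $u$, the coefficient $\eta$ then inherits spatial Hölder continuity of some degree strictly larger than $\theta_0$, uniformly in $t$, and is continuous in time into this Hölder space.

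With this input, Theorem~\ref{thm::mpronxtheta} applies to $-\nabla\cdot\eta\mu\nabla + 1$ at scale position $\theta_0$ and yields nonautonomous maximal parabolic regularity on $L^s(I, H^{-\zeta,p}_D)$ with constant domain $\Dom_{H^{-\zeta,p}_D}(-\nabla\cdot\mu\nabla + 1)$. The second hypothesis $\tfrac2s < 1-\zeta$ enters to keep the trace/embedding structure of the finer maximal regularity class compatible with the data: the interpolation space $(H^{-\zeta,p}_D, \Dom_{H^{-\zeta,p}_D}(-\nabla\cdot\mu\nabla+1))_{1/s',s}$ contains $u_0$ by hypothesis, while $\Fkal(u) - \Bkal_\alpha u \in L^s(I, H^{-\zeta,p}_D)$ by the given assumption on $\Fkal(u)$ and the Robin mapping property above. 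Consequently, the linear problem
\begin{equation*}
  \partial \tilde u - \nabla\cdot\xi(u)\mu\nabla \tilde u + \tilde u + \Bkal_\alpha \tilde u = \Fkal(u), \qquad \tilde u(0) = u_0,
\end{equation*}
admits a unique solution $\tilde u \in \W^{1,s}(I, (H^{-\zeta,p}_D, \Dom_{H^{-\zeta,p}_D}(-\nabla\cdot\mu\nabla + 1)))$.

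To close the argument, the embedding $H^{-\zeta,p}_D \embeds W^{-1,p}_D$ combined with the isomorphism property of $-\nabla\cdot\mu\nabla + 1\colon W^{1,p}_D \to W^{-1,p}_D$ from~(Iso) yields $\Dom_{H^{-\zeta,p}_D}(-\nabla\cdot\mu\nabla + 1) \embeds W^{1,p}_D$. Hence $\tilde u$ also belongs to the $W^{-1,p}_D$-maximal regularity class and solves~\eqref{eq::main-problem-abstract} there, so uniqueness in Theorem~\ref{thm::exonW1p} forces $\tilde u = u$. The chief technical obstacle is the parameter bookkeeping in the spatial Hölder calibration: one must align the Hölder index produced by the embedding of the $W^{-1,p}_D$-maximal regularity class with the threshold demanded by Theorem~\ref{thm::mpronxtheta}, while simultaneously keeping all perturbative and right-hand side terms well-defined in the finer Bessel-potential space.
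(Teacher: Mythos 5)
Your proposal is essentially correct, but it takes a genuinely different route from the one behind the cited statement: the paper simply quotes the result from the reference, whose proof establishes nonautonomous maximal parabolic regularity of $-\nabla\cdot\xi(u)\mu\nabla+1$ on $H^{-\zeta,p}_D$ through a rather technical argument based on the Acquistapace--Terreni condition, whereas you obtain the same maximal regularity from Theorem~\ref{thm::mpronxtheta} via the scale identification $H^{-\zeta,p}_D = X_{1-\zeta}$ and the H\"older calibration coming from Lemma~\ref{lem::max-reg-spaces-embed} and Corollary~\ref{cor::interpol-cont-embed}. This is not circular (Theorem~\ref{thm::mpronxtheta} does not rely on the present statement) and is in effect a one-step instance of the paper's own later bootstrap (Theorem~\ref{thm::eqinXtheta} with $\theta=1-\zeta$), so what your route buys is exactly what the paper advertises for its new technique: a much less involved argument, and in fact you never use the hypothesis $\frac2s<1-\zeta$ (your stated reason for it is not what it is needed for in the cited proof; with your technique it is simply superfluous), while $\frac2s<\zeta-\frac{d}p$ is correctly identified as what yields a spatial H\"older degree exceeding $1-\zeta$ for $\xi(u)$.

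Two points in your write-up deserve tightening. First, Theorem~\ref{thm::mpronxtheta} concerns $-\nabla\cdot\eta\mu\nabla+1$ \emph{without} the Robin term, so keeping $\Bkal_\alpha\tilde u$ on the left requires an additional perturbation argument for maximal regularity that you only assert (``harmless lower-order perturbation''). The cleaner route, and the one the paper's sketch and the proof of Theorem~\ref{thm::regforeq} take, is to freeze the known solution and put $\Fkal(u)-\Bkal_\alpha u$ on the right-hand side, which lies in $L^s(I,H^{-\zeta,p}_D)$ by Lemma~\ref{lem::trace} precisely because $1-\zeta<\frac1p$; then Theorem~\ref{thm::mpronxtheta} applies verbatim. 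Second, the identification step: $\tilde u$ does \emph{not} a priori solve the quasilinear equation~\eqref{eq::main-problem-abstract}, since its coefficient and right-hand side are built from $u$, so you cannot invoke the quasilinear uniqueness of Theorem~\ref{thm::exonW1p} directly. What you should use is that $\Dom_{H^{-\zeta,p}_D}(-\nabla\cdot\mu\nabla+1)\embeds W^{1,p}_D$ places $\tilde u$ in the $W^{-1,p}_D$ maximal regularity class, that $u$ solves the \emph{same linear} problem there, and that this linear problem has a unique solution in that class by nonautonomous maximal parabolic regularity of $-\nabla\cdot\xi(u)\mu\nabla+1$ on $W^{-1,p}_D$ (available since $\xi(u)\in C(\overline Q)$ is uniformly continuous and positive, as in the proof of Theorem~\ref{thm::exonW1p}); hence $\tilde u=u$. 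With the right-hand-side reformulation both issues disappear simultaneously, and your argument is complete.
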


Let us briefly recall the
main idea of the proof in~\cite{Bonifacius2018}, starting from
Theorem~\ref{thm::exonW1p}. The core of the reasoning is that the
degree of H\"older continuity of the given solution $u$ suffices to
show that the operator $-\nabla \cdot \xi(u) \mu \nabla + 1$ satisfies
nonautonomous maximal parabolic regularity in $H^{-\zeta,p}_D$ when
$s$ is large enough, depending on $\zeta$. This is established in a
quite involved and technical chain of arguments basing on the
Acquistapace-Terreni condition. From there, one argues quite easily
that the problem
\begin{equation*}
  \partial w - \nabla \cdot \xi(u) \mu \nabla w + w =
  \Fkal(u)-\Bkal_\alpha u \quad \text{in}~L^s(I,H^{-\zeta,p}_D), \qquad w(0) = u_0
\end{equation*}
admits a unique solution
$w \in \W^{1,s}(I,(H^{-\zeta,p}_D,\Dom_{H^{-\zeta,p}_D}(-\nabla \cdot
\mu \nabla+1)))$, which, due to uniqueness of $u$ in the $W^{-1,p}_D$
setting, must coincide with $u$.

We have already seen in Lemma~\ref{lem::interpol-Bessel} that in fact
$H^{-\zeta,p}_D = [W^{-1,p}_D,L^p]_{1-\zeta}$.  In the present paper we
systematically follow the above bootstrapping ansatz to discuss
solutions to~\eqref{eq::main-problem-abstract} in
$[L^p,W^{-1,p}_D]_\theta$ with general $\theta \in [0,1]$. It is a
welcome byproduct that our reasoning will be less involved than the
one in~\cite{Bonifacius2018} and that we can dispose of the
requirement $\frac2s < 1-\theta$.

Let us conclude this section by sketching an associated result on the
interpolation scale $[L^{p/2},W^{-1,p}]_\theta$. Some might see this
as the more natural scale as it is well known that thresholds such as
$p>d$ to obtain spatially continuous solutions apply exactly to the
two spaces there. Indeed, in this case, an involved bootstrapping
argument is not necessary at all, since improved regularity related to
$X=L^{p/2}$ is an immediate consequence of
Theorem~\ref{thm::exonH-zetap} and $u \in C(\overline I,W^{1,p}_D)$:

\begin{theorem}\label{thm::exonlp2}
  Consider the setting of Theorem~\ref{thm::exonH-zetap}. Suppose
  that $\alpha = 0$. Assume additionally that
  \begin{equation*}
    u_0 \in (L^{p/2}, \Dom_{L^{p/2}}(-\nabla \cdot \mu
    \nabla+1))_{1/s',s} \quad \text{and that} \quad \Fkal(u) \in L^s(I,L^{p/2}).
  \end{equation*}
  Then
  $u \in \W^{1,s}(I, (L^{p/2}, \Dom_{L^{p/2}}(-\nabla \cdot \mu
  \nabla+1)))$ and $u$ is the unique solution
  to~\eqref{eq::main-problem-abstract} on $L^{p/2}$.
\end{theorem}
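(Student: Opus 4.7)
The plan is to reduce the statement to Theorem~\ref{thm::exonH-zetap} combined with nonautonomous maximal parabolic regularity on $L^{p/2}$ via Proposition~\ref{prop::pruess-schnaubelt-cont-MPR}. As preparation, I would fix $\zeta \in (d/p,1)$ close to $d/p$ satisfying $\frac{2}{s}<(\zeta-\frac{d}{p})\wedge(1-\zeta)$, which is permitted provided $s$ is sufficiently large in Assumption~\ref{ass::problem-data}~(Iso). The key structural observation is that $L^{p/2}\embeds H^{-\zeta,p}_D$; this follows by duality from the Sobolev embedding $H^{\zeta,p'}_D\embeds L^{(p/2)'}$ which is available precisely because $\zeta>d/p$. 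As a consequence, $\Dom_{L^{p/2}}(-\nabla\cdot\mu\nabla+1)\embeds\Dom_{H^{-\zeta,p}_D}(-\nabla\cdot\mu\nabla+1)$, and by real interpolation the initial-value and source-term hypotheses of Theorem~\ref{thm::exonH-zetap} are implied by the ones assumed here. Theorem~\ref{thm::exonH-zetap} then yields
\begin{equation*}
u\in\W^{1,s}\bigl(I,(H^{-\zeta,p}_D,\Dom_{H^{-\zeta,p}_D}(-\nabla\cdot\mu\nabla+1))\bigr).
\end{equation*}

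From this intermediate regularity I would extract the decisive pointwise-in-time input $u\in C(\overline I,W^{1,p}_D)$: Lemma~\ref{lem::max-reg-spaces-embed} places $u$ continuously in the real interpolation space $(H^{-\zeta,p}_D,\Dom_{H^{-\zeta,p}_D}(-\nabla\cdot\mu\nabla+1))_{1/s',s}$, and for $\zeta$ close to $d/p$ and $s$ sufficiently large this space embeds into $W^{1,p}_D$. Combined with $W^{1,p}_D\embeds C(\overline\Omega)$ (Lemma~\ref{lem::Sobolev-embeddings}, $p>d$) and the local Lipschitz property of $\xi$, one obtains $\xi(u)\in C(\overline I,C(\overline\Omega))$.

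The third step is to solve the nonautonomous linear problem
\begin{equation*}
\partial w-\nabla\cdot\xi(u)\mu\nabla w + w=\Fkal(u)\quad\text{in}\ L^s(I,L^{p/2}),\qquad w(0)=u_0,
\end{equation*}
by nonautonomous maximal parabolic regularity on $L^{p/2}$. For each fixed $t$, $\xi(u(t))\mu$ is a coefficient function, so Proposition~\ref{prop::divergence-operator-properties-no-iso-yet} furnishes MPR of $-\nabla\cdot\xi(u(t))\mu\nabla+1$ on $L^{p/2}$. To invoke Proposition~\ref{prop::pruess-schnaubelt-cont-MPR}, one requires a common domain $E_1$ together with continuity of $t\mapsto -\nabla\cdot\xi(u(t))\mu\nabla+1\in\Lkal(E_1,L^{p/2})$, the natural candidate being $E_1=\Dom_{L^{p/2}}(-\nabla\cdot\mu\nabla+1)$. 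Once this is secured, a unique solution $w$ in the claimed maximal regularity class exists. Since $L^{p/2}\embeds W^{-1,p}_D$ in the present geometric setting (by duality from $W^{1,p'}_D\embeds L^{(p/2)'}$, valid for $p>d$), the function $w$ automatically solves~\eqref{eq::main-problem-abstract} also in the weaker $W^{-1,p}_D$-framework, and the uniqueness part of Theorem~\ref{thm::exonW1p} forces $w=u$.

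The main obstacle is precisely the invariance $\Dom_{L^{p/2}}(-\nabla\cdot\xi(u(t))\mu\nabla+1)=\Dom_{L^{p/2}}(-\nabla\cdot\mu\nabla+1)$ uniformly in $t$, i.e.\ the $L^{p/2}$-counterpart of Lemma~\ref{lem::iso-invariance-uniform-cont}; unlike in the $W^{-1,p}_D$-setting treated there, this does not follow directly from Assumption~\ref{ass::problem-data}~(Iso). A clean resolution is to upgrade $\xi(u)$ to a spatially H\"older continuous coefficient via $W^{1,p}_D\embeds C^{0,1-d/p}(\overline\Omega)$ and to adapt the fixed-coefficient arguments from~\cite{Bonifacius2018} that already underlie Theorem~\ref{thm::exonH-zetap}; equivalently, one may transfer the invariance statement developed for the scale $X_\theta$ in Section~\ref{sec::quasilinonXtheta} later in the paper. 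Once the domain invariance is in place, the continuity of $t\mapsto -\nabla\cdot\xi(u(t))\mu\nabla+1$ in $\Lkal(E_1,L^{p/2})$ and the transfer of uniqueness are standard, completing the proof.
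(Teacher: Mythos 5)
Your overall architecture is the same as the paper's: start from Theorem~\ref{thm::exonH-zetap}, extract $u \in C(\overline I,W^{1,p}_D)$ from the maximal-regularity embedding, and then run Proposition~\ref{prop::pruess-schnaubelt-cont-MPR} for $-\nabla\cdot\xi(u)\mu\nabla+1$ on $L^{p/2}$ with the constant domain $\Dom_{L^{p/2}}(-\nabla\cdot\mu\nabla+1)$, concluding $w=u$ by uniqueness in the $W^{-1,p}_D$-setting. However, you leave exactly the decisive step open and your two suggested fixes are not valid as stated. The domain invariance
\begin{equation*}
  \Dom_{L^{p/2}}\bigl(-\nabla\cdot\xi(u(t))\mu\nabla+1\bigr)=\Dom_{L^{p/2}}\bigl(-\nabla\cdot\mu\nabla+1\bigr)
\end{equation*}
is not a consequence of the H\"older continuity $\xi(u(t))\in C^{0,1-d/p}(\overline\Omega)$: H\"older regularity of degree $\vartheta<1$ is, by the logic of the paper's own Theorem~\ref{thm::mpronxtheta}, only known to preserve domains on $X_\theta=[W^{-1,p}_D,L^p]_\theta$ for $\theta<\vartheta$, and the Bonifacius--et al.\ arguments behind Theorem~\ref{thm::exonH-zetap} live in the negative-order spaces $H^{-\zeta,p}_D$, not in $L^{p/2}$. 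Likewise, "transferring the invariance statement from Section~\ref{sec::quasilinonXtheta}" does not work: $L^{p/2}$ is not a member of the scale $[W^{-1,p}_D,L^p]_\theta$, and the Lipschitz endpoint used there (Lemma~\ref{lem::Lp-domain-embed}, requiring $\eta\in W^{1,\infty}$) is not satisfied by $\xi(u(t))$.

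The mechanism the paper actually uses is different in kind: $\Dom_{L^{p/2}}(-\nabla\cdot\mu\nabla+1)$ is invariant under coefficient perturbations by \emph{$W^{1,p}_D$-functions} (cited as \cite[Lemma~6.7/Corollary~6.8]{Dintelmann2009}). Since $u(t)\in W^{1,p}_D$ and $\xi$ is locally Lipschitz, $\xi(u(t))$ is a positive $W^{1,p}$-function, and the product rule together with H\"older's inequality places $\nabla\xi(u(t))\cdot\mu\nabla v$ in $L^{p/2}$ for $v$ in the domain, because $L^p\cdot L^p\subset L^{p/2}$ --- this Sobolev (not H\"older) regularity of the coefficient, exploited at precisely the exponent $p/2$, is the reason the theorem is formulated in $L^{p/2}$ at all. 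With that invariance and the resulting continuity of $t\mapsto-\nabla\cdot\xi(u(t))\mu\nabla+1$ in $\Lkal(\Dom_{L^{p/2}}(-\nabla\cdot\mu\nabla+1),L^{p/2})$, the rest of your argument (Proposition~\ref{prop::divergence-operator-properties-no-iso-yet} pointwise in time, Proposition~\ref{prop::pruess-schnaubelt-cont-MPR}, and identification with $u$ by uniqueness) goes through as you describe. So the gap is concentrated in, and confined to, the step you yourself flagged as the main obstacle; to close it you should invoke or reprove the $W^{1,p}$-perturbation invariance of the $L^{p/2}$-domain rather than argue via H\"older continuity.
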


\begin{proof}
  We bootstrap the solution from Theorem~\ref{thm::exonH-zetap} once
  more. (Note that $L^{p/2} \embeds H^{-\zeta,p}_D$ due to the
  assumptions on $\zeta$ and $p$ and (adjoint) Sobolev embedding.) From
  Theorem~\ref{thm::exonH-zetap} and maximal regularity space
  embeddings, we in fact have $u \in C(\overline I,W^{1,p}_D)$,
  see~\cite[Corollary~3.7]{Bonifacius2018}
  or~\cite[Lemma~6.16]{Dintelmann2009}. But
  $\Dom_{L^{p/2}}(-\nabla \cdot \mu \nabla+1)$ is invariant under a
  coefficient perturbation by a $W^{1,p}_D$-function,
  see~\cite[Lemma~6.7/Corollary~6.8]{Dintelmann2009}. Thus
  \begin{equation*}
    \Dom_{L^{p/2}}(-\nabla \cdot \xi(u(t)) \mu
    \nabla+1) = \Dom_{L^{p/2}}(-\nabla \cdot \mu
    \nabla+1) \qquad \text{for every}~t\in \overline I,
  \end{equation*}
  in particular
  \[
    t \mapsto -\nabla \cdot \xi(u(t))\mu \nabla + 1 \in C(\overline I,
    \Lkal\bigl(\Dom_{L^{p/2}}(-\nabla \cdot \mu
    \nabla+1),L^{p/2})\bigr).
  \]
  Since for each fixed $\tau \in \overline I$ the autonomous operator
  $-\nabla \cdot \xi(u(\tau))\mu \nabla + 1$ admits maximal parabolic
  regularity on $L^{p/2}$
  (Proposition~\ref{prop::divergence-operator-properties-no-iso-yet})
  with the common domain $\Dom_{L^{p/2}}(-\nabla \cdot \mu \nabla+1)$,
  we conclude by Proposition~\ref{prop::pruess-schnaubelt-cont-MPR}
  that also $-\nabla \cdot \xi(u)\mu \nabla + 1$ admits
  \emph{nonautonomous} maximal parabolic regularity on $L^{p/2}$. The
  claim follows.
\end{proof}

Let us briefly come back to the motivation outlined in the
Introduction, regarding optimality conditions for an optimal control
problem associated with~\eqref{eq::main-problem-abstract} with
pointwise constraints on the gradient of the state. For these, we want
to guarantee that $\nabla u \in C(\overline{Q},\R^d)$ for a solution $u$
to~\eqref{eq::main-problem-abstract}. But, in general, we will have
$\frac{p}{2} < d$---see Remark~\ref{rem::assumptdisc_1}---, so this
regularity cannot be obtained even in the optimal case for
Theorem~\ref{thm::exonlp2}, which is
$u \in W^{1,s}(I,L^{p/2}) \cap L^s(I,W^{2,p/2}\cap W^{1,p/2}_D)$. In
this sense, even Theorem~\ref{thm::exonlp2} is insufficient for this
endeavor. On the scale $X_\theta = [W^{-1,p},L^p]_\theta$, however, it
\emph{is} possible to obtain the necessary regularity under quite
optimal conditions. This is the main reason not to be content with
Theorem~\ref{thm::exonlp2} but to work on the scale $X_\theta$.

\section{Nonautonomous maximal parabolic regularity on the scale}
\label{sec::quasilinonXtheta}

As outlined before, the main driving force behind the regularity
bootstrapping procedure will be nonautonomous maximal parabolic
regularity for $-\nabla\cdot\xi(u)\mu\nabla + 1$ in better spaces than
$W^{-1,p}_D$. This is the topic of the present section. We formally
introduce the following spaces that will also be used throughout the
rest of the paper:
\[
  X_\theta \coloneqq  [W^{-1,p}_D, L^p]_{\theta}, \qquad Y_\theta \coloneqq 
  \Dom_{X_\theta}(-\nabla \cdot \mu \nabla + 1), \qquad \theta \in
  (0,1).
\]
To simplify notation, we set $X_0 = W^{-1,p}_D$ and $X_1 = L^p$ as
well as $Y_0 = W^{1,p}_D$ and
$Y_1 = \Dom_{L^p}(-\nabla \cdot \mu \nabla + 1)$. Recall from
Lemma~\ref{lem::interpol-Bessel} that for $\theta \neq 1/p$, the space
$X_\theta$ in fact coincides with a (dual) Bessel space which either
reflects the Dirichlet boundary condition ($\theta < 1/p$) or not
($\theta > 1/p$). In particular, we have $Y_\theta \embeds X_\theta$ with dense embedding.

We will be concerned with deriving sharp conditions
on the scalar coefficient perturbation $\eta$ to have maximal
parabolic regularity for $-\nabla\cdot\eta\mu\nabla + 1$ in the
following problem:
\begin{align}\label{eq::nonautomous-general-problem-scale}
  \left. \begin{aligned}
      \partial_t w - \nabla \cdot \eta \mu \nabla w + w &= f &&\text{in } L^r(I,X_\theta), \\
      w(0) &= w_0 &&\text{in } (X_\theta,Y_\theta)_{1/r',r} 
    \end{aligned} \qquad \right\}
\end{align}
Hereby, we expect the coefficient function $\eta\colon Q \to \R$ to
require a certain Hölder-regularity that will be made precise
subsequently. This is an educated guess based on the fact that for
$\theta = 0$, so $X_\theta = W^{-1,p}_D$, we have seen in
Lemma~\ref{lem::iso-invariance-uniform-cont} and
Propositions~\ref{prop::pruess-schnaubelt-cont-MPR}
and~\ref{prop::divergence-operator-properties-no-iso-yet} that $\eta
\in C(\overline Q)$ uniformly continuous is an appropriate choice.

Accordingly, the first and main step is to show an analogue to
Lemma~\ref{lem::iso-invariance-uniform-cont}, that is, that the domain
of the elliptic operator $-\nabla \cdot \eta \mu \nabla + 1$ in
$X_\theta$ is again
$Y_\theta = \Dom_{X_\theta}(-\nabla \cdot \mu \nabla + 1)$ for a
positive function $\eta \in C^{0,\vartheta}(\overline \Omega)$ with
$\vartheta \in (\theta,1]$.

To do so we consider the linear map
\begin{equation}\label{eq::perturbation-operator}
  \eta \mapsto (-\nabla \cdot \eta \mu \nabla ) (-\nabla \cdot \mu \nabla + 1)^{-1}
\end{equation}
in the following limiting settings:
\begin{equation*}
  C(\overline \Omega) \to \Lkal(W^{-1,p}_D) \qquad \text{and}
  \qquad C^{0,1}(\overline \Omega) \to \Lkal(L^p).
\end{equation*}
These maps are well-defined and continuous; for the first one we refer
to Assumption~\ref{ass::problem-data}~(Iso) and H\"older's
inequality. The second is taken care of by the following lemma and
$C^{0,1}(\overline \Omega) \embeds
W^{1,\infty}(\Omega)$~(\cite[Remark~4.2]{Heinonen2005}):
\begin{lemma}
  \label{lem::Lp-domain-embed}
  Let $\eta \in W^{1,\infty}(\Omega)$. Then
    $\Dom_{L^p}(-\nabla\cdot\mu\nabla + 1) \embeds
    \Dom_{L^p}(-\nabla\cdot\eta\mu\nabla)$ and
    \begin{equation*}
    \bigl\lVert (-\nabla \cdot \eta \mu \nabla) (-\nabla \cdot \mu
    \nabla+1)^{-1} \bigr\rVert_{\Lkal(L^p)} \lesssim \lVert \eta
    \rVert_{W^{1,\infty}}.
    \end{equation*}
\end{lemma}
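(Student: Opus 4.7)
The plan is to exploit the distributional Leibniz identity
\[
  -\nabla\cdot(\eta\mu\nabla u) = \eta\,(-\nabla\cdot\mu\nabla u) - \nabla\eta\cdot\mu\nabla u,
\]
which becomes meaningful as an identity in $L^p(\Omega)$ once $u\in Y_1=\Dom_{L^p}(-\nabla\cdot\mu\nabla+1)$ and $\eta\in W^{1,\infty}(\Omega)$. To verify this identity rigorously and with the correct handling of the mixed boundary conditions, I would test the bilinear form defining $-\nabla\cdot\eta\mu\nabla u\in W^{-1,p}_D$ against an arbitrary $v\in C_D^\infty(\Omega)$ and use that $\eta v$ lies in $W^{1,p'}_D(\Omega)$ (since $\eta\in W^{1,\infty}$ and $v$ vanishes near $\Gamma_D$), so that it is an admissible test function for the weak divergence of $\mu\nabla u$. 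Rearranging then produces the displayed decomposition, with the right-hand side automatically in $L^p(\Omega)$.

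Granted the identity, the norm estimate splits into two pieces. The first summand is trivially bounded by $\|\eta\|_{L^\infty}\|-\nabla\cdot\mu\nabla u\|_{L^p}\leq \|\eta\|_{W^{1,\infty}}\|u\|_{Y_1}$. For the second summand I would pull out $\|\nabla\eta\|_{L^\infty}\|\mu\|_{L^\infty}$ and then need to control $\|\nabla u\|_{L^p}$ by the graph norm of $-\nabla\cdot\mu\nabla+1$ in $L^p$. This is exactly where the Kato square root property from Proposition~\ref{prop::kato-square-root} enters: it identifies $\Dom(A+1)^{1/2}$ with $W^{1,p}_D$ topologically, so
\[
  \|\nabla u\|_{L^p}\lesssim \|u\|_{W^{1,p}_D}\sim \|(A+1)^{1/2}u\|_{L^p}
  = \bigl\|(A+1)^{-1/2}(A+1)u\bigr\|_{L^p}\lesssim \|(A+1)u\|_{L^p},
\]
using that $(A+1)^{-1/2}\in\Lkal(L^p)$, which follows from $(A+1)^{-1}\in\Lkal(L^p)$ together with the bounded imaginary powers in Proposition~\ref{prop::divergence-operator-properties-no-iso-yet}. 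Combining both estimates yields $\|-\nabla\cdot\eta\mu\nabla u\|_{L^p}\lesssim \|\eta\|_{W^{1,\infty}}\|u\|_{Y_1}$, which is precisely the claimed embedding and the operator norm bound once we read the left-hand side as $(-\nabla\cdot\eta\mu\nabla)(-\nabla\cdot\mu\nabla+1)^{-1}f$ for $f=(-\nabla\cdot\mu\nabla+1)u\in L^p$.

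The main (and essentially only) delicate point is the verification of the weak Leibniz identity with the correct boundary-adapted test spaces; everything else is then a routine application of the square root identification. No further property of $\eta$ beyond $\eta\in W^{1,\infty}(\Omega)$ is used, and the embedding $C^{0,1}(\overline\Omega)\embeds W^{1,\infty}(\Omega)$ invoked in the preceding paragraph of the paper allows to apply the lemma directly to the Lipschitz perturbations relevant for the map in~\eqref{eq::perturbation-operator}.
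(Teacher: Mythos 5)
Your proposal is correct and follows essentially the same route as the paper's proof: test $-\nabla\cdot\eta\mu\nabla u$ against $v\in C_D^\infty(\Omega)$, move $\eta$ onto the test function via the Leibniz rule (using $\eta v\in W^{1,p'}_D$), estimate the two resulting terms with $\lVert\eta\rVert_{L^\infty}$ and $\lVert\nabla\eta\rVert_{L^\infty}\lVert\mu\rVert_{L^\infty}\lVert\nabla u\rVert_{L^p}$, control $\lVert\nabla u\rVert_{L^p}$ by the graph norm through $\Dom_{L^p}(-\nabla\cdot\mu\nabla+1)\embeds W^{1,p}_D$ (the Kato square root property), and conclude by density of $C_D^\infty(\Omega)$ in $L^{p'}(\Omega)$. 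The only cosmetic difference is that you phrase the outcome as an $L^p$-valued Leibniz identity while the paper estimates the pairing directly; the substance is identical.
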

\begin{proof}
  Let $u \in \Dom_{L^p}(-\nabla\cdot\mu\nabla + 1) \embeds W^{1,p}_D$
  (cf.\ Proposition~\ref{prop::kato-square-root}), and let
  $v \in C_D^\infty(\Omega)$. Then
  \begin{equation*}
    \blangle -\nabla\cdot\eta\mu\nabla u,v\brangle \coloneqq
    \int_\Omega \eta\mu\nabla u \cdot\nabla v =
    \int_\Omega \mu\nabla u \cdot \bigl[\nabla(\eta v)-v\nabla \eta\bigr].
  \end{equation*}
  Hence, with $\eta \in W^{1,\infty}$, 
  \begin{multline*}
    \left|\blangle -\nabla\cdot\eta\mu\nabla u,v\brangle\right| \leq
    \|u\|_{\Dom_{L^p(-\nabla\cdot\mu\nabla + 1)}} 
    \|\eta v\|_{L^{p'}}  +
    \|\mu\|_{L^\infty} \|\nabla u\|_{L^{p}} \|v\|_{L^{p'}}\|\nabla
    \eta\|_{L^\infty} \\ \lesssim \|\eta\|_{W^{1,\infty}}\|u\|_{\Dom_{L^p(-\nabla\cdot\mu\nabla + 1)}}\|v\|_{L^{p'}}.    
  \end{multline*}
  Since $C_D^\infty(\Omega)$ is dense in $L^{p'}(\Omega)$, it follows that
  $-\nabla\cdot\eta\mu\nabla u$ defines a continuous linear functional
  on $L^{p'}(\Omega)$. So $u \in \Dom_{L^p}(-\nabla\cdot\eta\mu\nabla)$, and
  the last estimate shows that
  $\Dom_{L^p}(-\nabla\cdot\mu\nabla + 1) \embeds
  \Dom_{L^p}(-\nabla\cdot\eta\mu\nabla)$. Hence~\eqref{eq::perturbation-operator}
  gives rise to a continuous linear operator on $L^p(\Omega)$. The stated norm
  estimate also follows immediately.
\end{proof}

With bilinear interpolation as in Theorem~\ref{thm::interpolationthm},
we then find that the linear operator
in~\eqref{eq::perturbation-operator} is also well-defined and
continuous as an operator
\begin{equation}\label{eq::applyThmA1}
  \bigl[C(\overline \Omega), C^{0,1}(\overline \Omega)\bigr]_\theta
  \to \Lkal\bigl([W^{-1,p}_D, L^p]_\theta\bigr) =
  \Lkal(X_\theta)
\end{equation}
for each $\theta \in [0,1]$.  The following invariance principle,
which is similar to~\cite[Lemmas 6.6, 6.7]{Dintelmann2009}, relies on
this observation; in fact it will serve as cornerstone of our further
analysis.

\begin{lemma}\label{lem::constdomain}
  Let $\theta \in [0,1]$ be given.
  \begin{enumerate}[(1)]
  \item Let
    $\eta \in [C(\overline \Omega), C^{0,1}(\overline
    \Omega)]_\theta$. Then
    $Y_\theta \embeds \Dom_{X_\theta}(-\nabla \cdot \eta\mu \nabla
    +1)$ and the map
    \[
      [C(\overline \Omega), C^{0,1}(\overline
    \Omega)]_\theta \to \Lkal(Y_\theta,
      X_\theta), \qquad \eta \mapsto -\nabla \cdot \eta \mu \nabla
    \]
    is bounded and linear.
  \item Suppose that
    $0 < \eta,\eta^{-1} \in [C(\overline \Omega), C^{0,1}(\overline
    \Omega)]_\theta$.
    Then
    \[
      Y_\theta = \Dom_{X_\theta}(-\nabla \cdot \eta \mu \nabla + 1),
    \]
    with equivalent norms.
  \end{enumerate}
  In particular, the assertions hold true for a positive function
  $\eta \in C^{0,\vartheta}(\overline\Omega)$ with
  $\vartheta \in (\theta,1]$ if $\theta <1$ and $\vartheta = 1$ if
  $\theta = 1$.
\end{lemma}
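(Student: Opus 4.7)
The plan is to leverage the bilinear-interpolation bound~\eqref{eq::applyThmA1} as the main engine and reduce both parts to that bound after a suitable change of base coefficient. First I would record that $-\nabla\cdot\mu\nabla + 1 \colon Y_\theta \to X_\theta$ is a topological isomorphism for every $\theta \in [0,1]$: this is just the definition of $Y_\theta$ as a graph-norm space combined with the isomorphism property on $X_0 = W^{-1,p}_D$ granted by Assumption~\ref{ass::problem-data}~(Iso), transported to $X_\theta$ via the continuous embedding $X_\theta \hookrightarrow X_0$.

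For Part~(1), for any $u \in Y_\theta$ and any $\eta \in [C(\overline\Omega), C^{0,1}(\overline\Omega)]_\theta$, I would factor
\[
  -\nabla\cdot\eta\mu\nabla\, u = \Bigl[(-\nabla\cdot\eta\mu\nabla)(-\nabla\cdot\mu\nabla+1)^{-1}\Bigr]\,(-\nabla\cdot\mu\nabla+1)u
\]
and apply~\eqref{eq::applyThmA1} to the bracketed operator. This yields simultaneously the embedding $Y_\theta \hookrightarrow \Dom_{X_\theta}(-\nabla\cdot\eta\mu\nabla + 1)$---adding the identity is harmless since $u \in X_\theta$ already---and the desired bounded linear dependence of $-\nabla\cdot\eta\mu\nabla \in \Lkal(Y_\theta, X_\theta)$ on $\eta$.

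For Part~(2), I would apply Part~(1) a second time with the exchanged roles of $\mu$ and $\tilde\mu \coloneqq \eta\mu$. Since $\eta$ is positive, continuous and bounded away from zero on compact $\overline\Omega$, the product $\tilde\mu$ is again a coefficient function, and Lemma~\ref{lem::iso-invariance-uniform-cont} provides Assumption~\ref{ass::problem-data}~(Iso) for $\tilde\mu$ because $\eta$ is uniformly continuous. Rerunning Lemma~\ref{lem::Lp-domain-embed} with $\tilde\mu$ in place of $\mu$ together with bilinear interpolation as in~\eqref{eq::applyThmA1} shows that $\eta^{-1} \mapsto (-\nabla\cdot\eta^{-1}\tilde\mu\nabla)(-\nabla\cdot\tilde\mu\nabla+1)^{-1}$ is bounded from $[C(\overline\Omega), C^{0,1}(\overline\Omega)]_\theta$ to $\Lkal(X_\theta)$. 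Applying Part~(1) in this re-based setting with perturbation $\eta^{-1}$ then gives
\[
  \Dom_{X_\theta}(-\nabla\cdot\eta\mu\nabla + 1) \hookrightarrow \Dom_{X_\theta}(-\nabla\cdot\eta^{-1}\eta\mu\nabla+1) = Y_\theta,
\]
which together with the reverse embedding from Part~(1) produces the claimed equality with equivalent norms. The H\"older-continuous final assertion follows from the standard continuous embedding $C^{0,\vartheta}(\overline\Omega) \hookrightarrow [C(\overline\Omega), C^{0,1}(\overline\Omega)]_\theta$ available for $\vartheta \in (\theta,1]$ (and trivially for $\vartheta = 1 = \theta$), combined with the pointwise estimate $|\eta^{-1}(x)-\eta^{-1}(y)| \leq \eta_\bullet^{-2}|\eta(x)-\eta(y)|$ which transfers positivity and H\"older-regularity from $\eta$ to $\eta^{-1}$.

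The principal obstacle I anticipate is the clean re-derivation of~\eqref{eq::applyThmA1} with $\tilde\mu$ as base coefficient: the proof of Lemma~\ref{lem::Lp-domain-embed} crucially uses $\Dom_{L^p}(-\nabla\cdot\mu\nabla + 1) \hookrightarrow W^{1,p}_D$ via Proposition~\ref{prop::kato-square-root}, and one must verify that this remains valid when $\mu$ is replaced by $\tilde\mu$. This reduces to checking that (Iso) and the Kato square-root property propagate under the perturbation by $\eta$, which is precisely what Lemma~\ref{lem::iso-invariance-uniform-cont} and the cited references behind Proposition~\ref{prop::kato-square-root} provide. Once this permanence is at hand, the remainder of the argument is routine composition and bilinear interpolation.
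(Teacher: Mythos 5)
Your proposal is correct and follows essentially the same route as the paper: part (1) via the factorization through $(-\nabla\cdot\eta\mu\nabla)(-\nabla\cdot\mu\nabla+1)^{-1}$ and the bilinear-interpolation bound~\eqref{eq::applyThmA1}, part (2) by re-basing with $\hat\mu = \eta\mu$ (whose (Iso) property comes from Lemma~\ref{lem::iso-invariance-uniform-cont}) and applying part (1) with perturbation $\eta^{-1}$, and the final assertion via Proposition~\ref{prop::interpolationhoelder} together with the stability of positivity and H\"older regularity under taking reciprocals. Your explicit check that Lemma~\ref{lem::Lp-domain-embed} and the Kato square root property (Proposition~\ref{prop::kato-square-root}) remain available for $\hat\mu$ is a detail the paper leaves implicit, but it is handled correctly and does not change the argument.
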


\begin{proof}
  (1)~For the first assertion, it will be enough to show that
  $Y_\theta \embeds \Dom_{X_\theta}(-\nabla \cdot \eta \mu \nabla)$. For
  $\varphi \in Y_\theta$ we have
  \begin{align*}
    -\nabla \cdot \eta \mu \nabla \varphi = \underbrace{\left[
        (-\nabla \cdot \eta \mu \nabla) (-\nabla \cdot \mu
        \nabla+1)^{-1}\right]}_{\in \Lkal(X_\theta) \text{ due
        to~\eqref{eq::applyThmA1}}} \underbrace{(-\nabla \cdot \mu
      \nabla \varphi+\varphi)}_{\in X_\theta}  \in X_\theta,
  \end{align*}
  so indeed $\varphi \in \Dom_{X_\theta}(-\nabla \cdot \eta \mu \nabla)$.
  We further use that the operator
  in~\eqref{eq::perturbation-operator} is continuous as
  in~\eqref{eq::applyThmA1} with its norm bounded by
   \[
    \bigl\lVert (-\nabla \cdot \eta \mu \nabla) (-\nabla \cdot \mu
    \nabla+1)^{-1} \bigr\rVert_{\Lkal(X_\theta)} \lesssim \lVert \eta
    \rVert_{[C(\overline \Omega), C^{0,1}(\overline \Omega)]_\theta}.
  \]
  to estimate
  \begin{align*}
     \lVert -\nabla \cdot \eta \mu \nabla \varphi \rVert_{X_\theta}
    & \leq \bigl\lVert (-\nabla \cdot \eta \mu \nabla)(-\nabla \cdot
    \mu \nabla+1)^{-1} \bigr\rVert_{\Lkal(X_\theta)} \lVert -\nabla
    \cdot \mu \nabla \varphi + \varphi\rVert_{X_\theta} \\ 
    & \lesssim \lVert \eta \rVert_{[C(\overline \Omega), C^{0,1}(\overline \Omega)]_\theta} \lVert \varphi \rVert_{Y_\theta}.
  \end{align*}
  The assertions follow. 

  (2) 
  Let
  $\eta,\eta^{-1} \in [C(\overline \Omega), C^{0,1}(\overline
  \Omega)]_\theta$. Then both $\eta$ and $\eta^{-1}$ are uniformly
  continuous, bounded and positive. With
  Lemma~\ref{lem::iso-invariance-uniform-cont} it follows that
  $-\nabla\cdot\eta\mu\nabla + 1$ is also a topological isomorphism
  between $W^{1,p}_D$ and $W^{-1,p}_D$, i.e., $\hat \mu \coloneqq \eta \mu$
  satisfies the isomorphism assumption in
  Assumption~\ref{ass::problem-data}~(Iso). Consequently, we can apply the arguments
  of the first part to $\hat \eta \coloneqq \eta^{-1}$ and $\hat \mu$
  instead of $\eta$ and $\mu$, respectively, to obtain that
  \begin{multline*}
    \Dom_{X_\theta}(-\nabla \cdot \eta \mu \nabla + 1) =
    \Dom_{X_\theta}(-\nabla \cdot \hat \mu \nabla + 1) \\
    \embeds \Dom_{X_\theta}(-\nabla \cdot \hat \eta \hat \mu
    \nabla + 1) = \Dom_{X_\theta}(-\nabla \cdot \mu \nabla+1),
  \end{multline*}
  and this implies the claim.

  Finally, by Proposition~\ref{prop::interpolationhoelder} we have
  $C^{0,\vartheta}(\overline \Omega) \embeds [C(\overline \Omega),
  C^{0,1}(\overline \Omega)]_\theta$ when $\vartheta \in
  (\theta,1]$. Moroever, if $\eta \in C^{0,\vartheta}(\overline
  \Omega)$ is positive, then also $\eta^{-1} \in C^{0,\vartheta}(\overline
  \Omega)$. Hence, for such $\eta$ all the previous assertions are
  valid. The case $\theta = 1$ follows analogously.
\end{proof}

Next, we establish nonautonomous maximal parabolic regularity of the
operator $-\nabla \cdot \eta \mu \nabla + 1$ on $X_\theta$ for
positive coefficient functions
$\eta \in C(\overline I,C^{0,\vartheta}(\overline\Omega))$ for
$\vartheta \in (0,1]$. This will give existence, uniqueness and
regularity of solutions
to~\eqref{eq::nonautomous-general-problem-scale}. Since for
$\eta \in C(\overline I,C^{0,\vartheta}(\overline\Omega))$ for
$\vartheta \in (0,1]$ we already have constant domains for
$-\nabla \cdot \eta \mu \nabla + 1$ in $X_\theta$ at hand by virtue of
Lemma~\ref{lem::constdomain}, the following theorem follows quite
easily with Proposition~\ref{prop::pruess-schnaubelt-cont-MPR}:

\begin{theorem}\label{thm::mpronxtheta}
  Given $\vartheta \in (0,1]$ and $\theta \in [0,\vartheta)$ if
  $\vartheta < 1$ and $\theta \in [0,1]$ if $\vartheta=1$, let
  $\eta \in C(\overline I,C^{0,\vartheta}(\overline\Omega))$ be
  positive.  Then the operator $-\nabla \cdot \eta \mu \nabla+1$
  exhibits nonautonomous maximal parabolic regularity on $X_\theta$
  with domain $Y_\theta$, that is, for any $r \in (1,\infty)$, if
  $f \in L^r(I,X_\theta)$ and $w_0 \in (X_\theta,Y_\theta)_{1/r',r}$,
  then there is a unique solution $w$
  to~\eqref{eq::nonautomous-general-problem-scale} and the
  corresponding solution map
  \begin{align}\label{eq::iso}
    L^r(I,X_\theta) \times (X_\theta,Y_\theta)_{1/r',r} \to \W^{1,r}(I,(X_\theta,Y_\theta)), \qquad (f,w_0) \mapsto w,
  \end{align}
  is a topological isomorphism. If
  $\Ekal \subset C(\overline I,C^{0,\vartheta}(\overline\Omega))$ is a
  compact set such that $\eta(x) > 0$ for all $x \in \overline{Q}$ for
  all $\eta \in \Ekal$, then the isomorphism~\eqref{eq::iso} is
  equicontinuous with respect to $\eta \in \Ekal$.
\end{theorem}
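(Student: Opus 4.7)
My plan is to verify the hypotheses of Proposition~\ref{prop::pruess-schnaubelt-cont-MPR} for the operator family $t \mapsto -\nabla\cdot\eta(t)\mu\nabla + 1$ acting from $Y_\theta$ to $X_\theta$, namely: (i) the domain is really $Y_\theta$ at every time, with an operator norm that is uniform in $t$; (ii) the map $t \mapsto -\nabla\cdot\eta(t)\mu\nabla + 1$ is continuous on $\overline I$ with values in $\Lkal(Y_\theta,X_\theta)$; and (iii) for each fixed $\tau \in \overline I$, the autonomous operator $-\nabla\cdot\eta(\tau)\mu\nabla + 1$ admits maximal parabolic regularity on $X_\theta$. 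Once this is done, the nonautonomous maximal parabolic regularity statement together with the isomorphism claim~\eqref{eq::iso} follows from Proposition~\ref{prop::pruess-schnaubelt-cont-MPR} and the equivalent characterization~\eqref{eq::max-reg-equiv-total-deriv}.

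For (i) and (ii), the key ingredient is Lemma~\ref{lem::constdomain}: since $\eta(t) \in C^{0,\vartheta}(\overline\Omega)$ is positive for every $t \in \overline I$, and since $\vartheta > \theta$ (resp.\ $\vartheta = \theta = 1$), both $\eta(t)$ and $\eta(t)^{-1}$ lie in $C^{0,\vartheta}(\overline\Omega) \embeds [C(\overline\Omega),C^{0,1}(\overline\Omega)]_\theta$, so that $\Dom_{X_\theta}(-\nabla\cdot\eta(t)\mu\nabla + 1) = Y_\theta$ with equivalent norms, and the operator norm is controlled by $\|\eta(t)\|_{C^{0,\vartheta}}$, uniformly in $t$ because $\eta \in C(\overline I,C^{0,\vartheta}(\overline\Omega))$. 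Continuity in $t$ of $t \mapsto -\nabla\cdot\eta(t)\mu\nabla + 1 \in \Lkal(Y_\theta,X_\theta)$ then reduces, by linearity, to the bound from Lemma~\ref{lem::constdomain}(1) applied to the increment $\eta(t)-\eta(s)$, which tends to zero in $C^{0,\vartheta}(\overline\Omega)$ as $t \to s$.

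For (iii), for each fixed $\tau \in \overline I$ the function $\eta(\tau)\mu$ is itself a coefficient function, and Corollary~\ref{cor::max-reg-interpolation-space} (applied with $q=p$ to $\rho = \eta(\tau)\mu$) gives that the part of $-\nabla\cdot\eta(\tau)\mu\nabla$ in $X_\theta = [W^{-1,p}_D,L^p]_\theta$ satisfies maximal parabolic regularity, hence so does the bounded perturbation $-\nabla\cdot\eta(\tau)\mu\nabla + 1$; by (i), the domain on $X_\theta$ is $Y_\theta$ for each $\tau$. This completes the setup for Proposition~\ref{prop::pruess-schnaubelt-cont-MPR}.

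The hard part is the equicontinuity on a compact subset $\Ekal \subset C(\overline I,C^{0,\vartheta}(\overline\Omega))$ of positive coefficients. By compactness, $\Ekal$ is bounded in $C(\overline I,C^{0,\vartheta}(\overline\Omega))$, and the pointwise positivity together with continuity and compactness delivers a uniform lower bound $\eta(x) \geq \eta_\bullet > 0$ for all $x \in \overline Q$ and all $\eta \in \Ekal$. From Lemma~\ref{lem::constdomain} this yields a uniform equivalence of the graph norm on $Y_\theta$ induced by $-\nabla\cdot\eta\mu\nabla + 1$ and the base norm, uniformly in $\eta \in \Ekal$. To turn this into equicontinuity of the isomorphism in~\eqref{eq::iso}, I would show that the map
\begin{equation*}
\Phi\colon \eta \mapsto \bigl(\partial + (-\nabla\cdot\eta\mu\nabla + 1),\gamma_0\bigr)^{-1}
\end{equation*}
is continuous from the open set of positive elements in $C(\overline I,C^{0,\vartheta}(\overline\Omega))$ into $\Lkal(L^r(I,X_\theta)\times(X_\theta,Y_\theta)_{1/r',r},\W^{1,r}(I,(X_\theta,Y_\theta)))$: continuity of $\Phi$ follows from the Neumann-series/identity $\Phi(\eta_1) - \Phi(\eta_2) = \Phi(\eta_1)[\nabla\cdot(\eta_1-\eta_2)\mu\nabla]\Phi(\eta_2)$ together with the bound on $\eta \mapsto -\nabla\cdot\eta\mu\nabla$ in $\Lkal(Y_\theta,X_\theta)$ coming from Lemma~\ref{lem::constdomain}(1). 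Once $\Phi$ is continuous, its image of the compact set $\Ekal$ is compact, hence bounded, which is exactly the stated equicontinuity. The only subtlety is to verify that $\Phi$ is well-defined on an open neighborhood of $\Ekal$; this is clear since uniform positivity persists under small $C(\overline I,C^{0,\vartheta})$ perturbations.
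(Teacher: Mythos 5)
Your proposal is correct and follows essentially the same route as the paper: constant domains and continuity of $t \mapsto -\nabla\cdot\eta(t)\mu\nabla + 1$ in $\Lkal(Y_\theta,X_\theta)$ via Lemma~\ref{lem::constdomain}, maximal parabolic regularity of the frozen operators on $X_\theta$ via Corollary~\ref{cor::max-reg-interpolation-space}, and then Proposition~\ref{prop::pruess-schnaubelt-cont-MPR}; for the equicontinuity the paper likewise argues that $\eta \mapsto (\partial - \nabla\cdot\eta\mu\nabla + 1,\gamma_0)^{-1}$ is continuous (your Neumann-series/resolvent-identity step is just the explicit form of the paper's appeal to continuity of operator inversion) and then uses boundedness of a continuous function on the compact set $\Ekal$.
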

 
\begin{proof}
Due to Lemma~\ref{lem::constdomain} the map
\[
  \overline I \to \Lkal(Y_\theta, X_\theta), \qquad t \mapsto -\nabla
  \cdot \eta(t) \mu \nabla + 1,
\]
is well-defined and continuous, and for every $t$, the resulting
operator is even continuously invertible. This together with
Corollary~\ref{cor::max-reg-interpolation-space} allows to invoke
Proposition~\ref{prop::pruess-schnaubelt-cont-MPR} which yields the
claim on nonautonomous maximal parabolic regularity on $X_\theta$ with
the domain $Y_\theta$.

  It remains to argue on equicontinuity of the solution maps for
  $\eta \in \Ekal$: Note that for each $r \in (1,\infty)$ the map
  \begin{align*}
    C(\overline I,C^{0,\vartheta}(\overline\Omega)) \supset \Ekal &\to
    \mathcal{L}\bigl(\W^{1,r}(I,(X_\theta,Y_\theta)), L^r(I,X_\theta)
    \times (X_\theta,Y_\theta)_{1/r',r}\bigr), \\[0.25em] 
    \eta &\mapsto (\partial - \nabla \cdot \eta \mu \nabla + 1, \gamma_0)^{-1}, 
  \end{align*}
  is well-defined due to nonautonomous parabolic regularity of
  $-\nabla \cdot \eta \mu \nabla + 1$ in $X_\theta$ with domain
  $Y_\theta$ for each $\eta \in \Ekal$. Moreover, the map is
  continuous by Lemma~\ref{lem::constdomain} and continuity of
  operator inversion $A \mapsto A^{-1}$. Hence, its operator norm is bounded on the compact set $\Ekal$.
\end{proof}


\begin{remark}\label{rem::mpr}
  The significant difference between the proof of
  Theorem~\ref{thm::mpronxtheta} and the one of~\cite[Theorem
  3.20]{Bonifacius2018} is that here, by virtue of
  Lemma~\ref{lem::constdomain}, the domains of
  $-\nabla\cdot\eta(t)\mu\nabla + 1$ in $X_\theta$ are \emph{a priori}
  independent of $t$. (It follows \emph{a posteriori} in the proof
  of~\cite[Theorem~3.20]{Bonifacius2018}, though.) This allows to
  apply Proposition~\ref{prop::pruess-schnaubelt-cont-MPR} basing on
  constant domains which is much less involved than the treatment for
  \emph{a priori} nonconstant domains based on uniform
  $\Rkal$-boundedness as in~\cite{Portal2006}. The present insight
  that the domains \emph{are} constant \emph{a priori} is in turn based on
  the bilinear interpolation technique as established in
  Appendix~\ref{sec::RehbergMeinlschmidt} and invariance of
  H\"older-continuous functions under taking the reciprocal.
\end{remark}

\section{Abstract quasilinear parabolic equations in the scale}
\label{subsec::eqonXtheta}

Building upon on nonautonomous maximal parabolic regularity
for~\eqref{eq::nonautomous-general-problem-scale} on $X_\theta$ as in
Theorem~\ref{thm::mpronxtheta}, we now treat the quasilinear abstract
problem~\eqref{eq::main-problem-abstract} on $X_\theta$ utilizing a
bootstrapping argument. This is the main achievement of the paper. The
bootstrapping procedure starts from Theorem~\ref{thm::exonW1p} for
$X_0 = W^{-1,p}_D$ and works its way up through $X_\theta$ until some
maximal $\bar\theta \in (0,1]$ is reached. The fundamental idea is
that given the solution $u$ in $X_\theta$, one finds---via
Theorem~\ref{thm::mpronxtheta}---that the associated degree of
H\"older continuity of $u$ is in fact sufficient to obtain
nonautonomous maximal parabolic regularity for
$-\nabla\cdot\xi(u)\mu\nabla + 1$ in $X_{\theta^+}$ for
$\theta^+ > \theta$.

The value of this maximal $\bar\theta$ will be related to the degree
of Hölder-regularity $\kappa$ of the elliptic boundary value problem
in $L^p$ associated to $-\nabla \cdot \mu \nabla+1$. In view of
Theorem~\ref{thm::mpronxtheta}, this should not come as a surprise,
since the highest degree of H\"older-regularity obtainable within the
$X_\theta$ scale will be limited exactly by the associated value for
$X_1 = L^p$.

Thus, we start with a short discussion regarding $\kappa$. The main
argument is then given for the case where $\Fkal$ in the abstract
quasilinear equation~\eqref{eq::main-problem-abstract} is
\emph{constant} in order to make the idea more transparent. This is
Section~\ref{subsec::simplecase}. We then include $\Fkal$ in
Section~\ref{subsec::generalcase}.


\begin{definition}\label{def::regdom} 
  Let $\kappa \in [1-\frac{d}{p},2-\frac{d}{p}]$ be as large as
  possible such that
  \begin{equation*}
    \Dom_{L^p}(-\nabla \cdot \mu \nabla+1) \embeds
    \begin{cases}
      C^{\lfloor\kappa\rfloor,\kappa}(\overline \Omega) &
      \text{if}~\kappa\neq 1,\\[0.25em] C^{0,1}(\overline\Omega) &
      \text{if}~\kappa =1.
    \end{cases}
  \end{equation*}
\end{definition}
Let us point out that such a $\kappa \geq 1-\frac{d}p$ always exists under
Assumption~\ref{ass::problem-data}~(Iso), since
\begin{equation*}
  \Dom_{L^p}(-\nabla \cdot \mu \nabla+1) \embeds W^{1,p}_D \embeds
  C^{0,1-\frac{d}p}(\overline\Omega). 
\end{equation*}

\begin{remark}
  \label{rem:kappa-assumption}
  We comment on particular cases for $\kappa$ in
  Definition~\ref{def::regdom}.
  \begin{enumerate}[(1)]
  \item The optimal case $\kappa =2-\frac{d}{p}$ is obtained for
    example when $\Omega$ is a $C^{1,1}$-domain, $\mu$ is
    Lipschitz-continuous, and pure homogeneous Dirichlet or Neumann
    boundary conditions are imposed, so either
    $\Gamma_D = \partial \Omega$ or $\Gamma_D = \emptyset$. In this
    case we have optimal $W^{2,p}$-regularity for the boundary
    problem associated to $-\nabla\cdot\mu\nabla + 1$, so
    $\Dom_{L^p}(-\nabla \cdot \mu \nabla+1) = W^{2,p} \cap W^{1,p}_D$,
    see~\cite[Theorems~2.4.2.5 and~2.4.2.7]{Grisvard1985}.
  \item Suppose that the domain geometry remains rather irregular, but
    $\mu$ gives rise to a multiplier on $H^{\eta,p}$ for some
    $\eta < 1/p$, possibly small.  Then it can be shown that a
    fractional power of the $L^p$-realization of
    $-\nabla \cdot\mu\nabla + 1$ will have the domain $H^{1+\eps,p}_D$
    for some $\eps < \eta$ by extrapolation techniques,
    see~\cite[Section~3]{Meinlschmidt2021}. For example, $\mu$ could
    be the characteristic function of a convex subset of $\Omega$, or
    a subset of locally bounded perimeter, or a linear combination of
    such functions. We do not go into details here. In any case, it
    follows that we can have $\kappa = 1+\eps-\frac{d}p>1-\frac{d}p$
    in this quite general situation.
  \end{enumerate}
\end{remark}

The maximal possible value of $\kappa$ is nontrivial to determine; it
is related to the regularity of the boundary of $\Omega$, the boundary
conditions imposed, the coefficient function $\mu$, and $p$, and may
not be known explicitely in general. We thus formulate our further
results in dependence of $\kappa$.  See also
Appendix~\ref{sec:hold-regul-ellipt} for a further discussion for
various practical situations.

\subsection{The interpolation spaces and fractional powers}
\label{subsec::interpolation-spaces}
In this section we collect some results on the interpolation spaces
\begin{equation*}
  \bigl[X_\vartheta,Y_\vartheta\bigr]_{\tau} \coloneqq  \left[\bigl[W^{-1,p}_D,
    L^p\bigr]_{\vartheta}, \Dom_{X_\vartheta}(-\nabla \cdot \mu \nabla +
    1)\right]_{\tau}.
\end{equation*}
These occur naturally in the consideration of quasilinear problems
from trace embeddings, see Lemma~\ref{lem::max-reg-spaces-embed};
usually, they are considered in order to establish regularity
properties for the nonlinear functions in fixed point arguments. In
view of the results of Section~\ref{sec::quasilinonXtheta} on
nonautonomous maximal parabolic regularity for H\"older-continuous
coefficients, it is thus not surprising that we aim for an embedding
into a suitable H\"older space for $[X_\vartheta,Y_\vartheta]_{\tau}$.

The first auxiliary result is that the spaces in question can be
identified with the domain of a fractional power of
$-\nabla\cdot\mu\nabla + 1$, co-restricted to $L^p$. 

\begin{lemma}
  \label{lem:interpolation-is-fracpower-domain}
  Let $\vartheta \in [0,1]$ and
  $\tau \in (\frac12-\frac\vartheta2,1]$. Then
  \begin{equation*}
    \bigl[X_\vartheta,Y_\vartheta\bigr]_{\tau} = \Dom_{L^p}(-\nabla
    \cdot\mu\nabla + 1)^{\tau-\frac12+\frac\vartheta2}.
  \end{equation*}
\end{lemma}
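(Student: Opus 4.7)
The plan is to identify both sides of the claimed equality with domains of fractional powers of a single operator, namely $B \coloneqq -\nabla\cdot\mu\nabla + 1$ viewed on $W^{-1,p}_D$ with domain $W^{1,p}_D$ (by Assumption~\ref{ass::problem-data}~(Iso)). The operator $B$ has bounded imaginary powers by Proposition~\ref{prop::divergence-operator-properties-no-iso-yet}, so the reiteration theorem for complex interpolation between its fractional power domains (the variant of Lemma~\ref{lem::reiteration-frac-powers} on $W^{-1,p}_D$ rather than on $L^p$) is available and will do essentially all the work.

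First, the scale of fractional power domains of $B$ has to be pinned down. Combining $\Dom(B) = W^{1,p}_D$ with $[W^{-1,p}_D, W^{1,p}_D]_{1/2} = L^p$ from Lemma~\ref{lem::interpol-Bessel} and reiteration forces $\Dom(B^{1/2}) = L^p$, which is the starting point. A further application of reiteration then yields $X_\vartheta = [W^{-1,p}_D, L^p]_\vartheta = \Dom(B^{\vartheta/2})$ for every $\vartheta \in [0,1]$. Second, the space $Y_\vartheta = \Dom_{X_\vartheta}(B)$ is easily seen to coincide with $\Dom(B^{1+\vartheta/2})$: the defining condition $u, Bu \in X_\vartheta = \Dom(B^{\vartheta/2})$ is equivalent to $B^{\vartheta/2}Bu = B^{1+\vartheta/2}u$ being well defined in $W^{-1,p}_D$, and the graph norm on $Y_\vartheta$ is equivalent to the one inherited from $\Dom(B^{1+\vartheta/2})$ by bijectivity of the fractional powers of $B$.

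A third application of reiteration now gives
\[
[X_\vartheta, Y_\vartheta]_\tau = \bigl[\Dom(B^{\vartheta/2}), \Dom(B^{1+\vartheta/2})\bigr]_\tau = \Dom\bigl(B^{\vartheta/2+\tau}\bigr).
\]
It remains to transfer this identification to the $L^p$-scale. Generalising the short argument given just after Proposition~\ref{prop::kato-square-root} from exponent $1/2$ to arbitrary $\delta \in (0,1]$, one has
\[
\Dom\bigl(B^{1/2+\delta}\bigr) = B^{-1/2-\delta} W^{-1,p}_D = B^{-\delta}\bigl(B^{-1/2} W^{-1,p}_D\bigr) = B^{-\delta} L^p = \Dom\bigl(A^\delta\bigr),
\]
where $A$ denotes the part of $B$ in $L^p$. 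Choosing $\delta = \tau - 1/2 + \vartheta/2$, which lies in $(0,1]$ precisely under the hypothesis $\tau \in (1/2-\vartheta/2, 1]$ together with $\vartheta \in [0,1]$, yields the stated identity.

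The only mildly delicate step should be the identification $Y_\vartheta = \Dom(B^{1+\vartheta/2})$ with equivalent norms, since it rests on closedness and bijectivity of successive fractional powers of $B$ across the extrapolation scale $W^{-1,p}_D, L^p$ and must be spelled out carefully. Everything else is a bookkeeping exercise powered by bounded imaginary powers and reiteration.
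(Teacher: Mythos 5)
Your argument is correct, but it takes a genuinely different route from the paper. The paper first invokes Lemma~\ref{lem::domain2} to commute the two interpolation parameters, writing $[X_\vartheta,Y_\vartheta]_\tau = \bigl[[W^{-1,p}_D,W^{1,p}_D]_\tau,[L^p,\Dom_{L^p}(-\nabla\cdot\mu\nabla+1)]_\tau\bigr]_\vartheta$, and then splits into the cases $\tau<\frac12$ and $\tau\geq\frac12$, each handled with the Kato square root property (Proposition~\ref{prop::kato-square-root}), Lemma~\ref{lem::interpol-Bessel} and repeated reiteration. You instead realize the whole scale as fractional power domains of the single operator $B=-\nabla\cdot\mu\nabla+1$ on $W^{-1,p}_D$: BIP (Proposition~\ref{prop::divergence-operator-properties-no-iso-yet}) plus (Iso) and $[W^{-1,p}_D,W^{1,p}_D]_{1/2}=L^p$ give $\Dom(B^{1/2})=L^p$, hence $X_\vartheta=\Dom(B^{\vartheta/2})$ and $Y_\vartheta=\Dom(B^{1+\vartheta/2})$, one reiteration gives $\Dom(B^{\tau+\vartheta/2})$, and a final transfer to the $L^p$-realization with $\delta=\tau-\frac12+\frac\vartheta2\in(0,1]$ finishes. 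This buys a uniform argument with no case distinction at $\tau=\frac12$ and no need for Lemma~\ref{lem::domain2}, at the price of invoking facts slightly beyond the paper's stated toolkit: the reiteration theorem of~\cite[Theorem~1.15.3]{Triebel1995} for exponents up to $\frac32$ (the paper's Lemma~\ref{lem::reiteration-frac-powers} is only stated for powers in $[0,1]$, but the cited theorem covers this), additivity of fractional powers to justify $Y_\vartheta=\Dom(B^{1+\vartheta/2})$ with equivalent norms (the step you rightly flag as the delicate one), and the identity ``fractional power of the part equals part of the fractional power'' in the last equality $B^{-\delta}L^p=\Dom(A^\delta)$ --- the paper records exactly this for $\gamma\in[0,1]$ right after Proposition~\ref{prop::kato-square-root}, which suffices since $\delta\leq 1$; note that the Kato square root property therefore still enters your proof implicitly at that point, just as it does explicitly in the paper's.
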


\begin{proof}
  Recall that the fractional powers of $-\nabla\cdot\mu\nabla + 1$
  were well-defined and that we even have bounded imaginary powers at
  our disposal
  (Proposition~\ref{prop::divergence-operator-properties-no-iso-yet}).
  We thus apply Lemma~\ref{lem::domain2} and
  $\Dom_{W^{-1,p}_D}(-\nabla \cdot \mu \nabla + 1) = W^{1,p}_D$ by
  Assumption~\ref{ass::problem-data}~(Iso):
  \begin{align}\label{eq::lemhelp1}
    \bigl[X_\vartheta,Y_\vartheta\bigr]_{\tau} &=
    \left[ \bigl[W^{-1,p}_D, W^{1,p}_D\bigr]_\tau, \bigl[L^p, \Dom_{L^p}(-\nabla \cdot \mu \nabla + 1)\bigr]_\tau\right] _\vartheta.
  \end{align} Next, we branch along
  $\tau = \frac12$. If $\frac12-\frac\vartheta2 < \tau < \frac12$,
  then necessarily $\vartheta>0$ and
  we argue as follows:
  \begin{multline*}
    \left[ \bigl[W^{-1,p}_D, W^{1,p}_D\bigr]_\tau, \bigl[L^p,
      \Dom_{L^p}(-\nabla \cdot \mu \nabla + 1)\bigr]_\tau\right]
    _\vartheta \\ = \left[ \bigl[W^{-1,p}_D, W^{1,p}_D\bigr]_\tau,
      \bigl[L^p, W^{1,p}_D\bigr]_{2\tau}\right] _\vartheta = \left[
      \bigl[W^{-1,p}_D, L^p\bigr]_\xi, W^{1,p}_D\right]
    _{(1+\vartheta)\tau}
  \end{multline*}
  where we have used the Kato square root property
  $\Dom_{L^p}(-\nabla\cdot\mu\nabla + 1)^{\frac12} = W^{1,p}_D$
  (Proposition~\ref{prop::kato-square-root}) and the reiteration
  theorem for complex interpolation with
  $(1-(1+\vartheta)\tau)\xi = (1-2\tau)\vartheta$.  Now,
  $(1+\vartheta)\tau - (1-(1-\vartheta)\tau)(1-\xi) =
  2\tau-1+\vartheta > 0$. Hence, using
  Lemma~\ref{lem::interpol-Bessel} and the reiteration theorem twice,
  \begin{align*}
    \left[
      \bigl[W^{-1,p}_D, L^p\bigr]_\xi, W^{1,p}_D\right]
    _{(1+\vartheta)\tau} &=  \left[
      \Bigl[W^{-1,p}_D, \bigl[W^{-1,p}_D,W^{1,p}_D\bigr]_{\frac12}
      \Bigr]_\xi, W^{1,p}_D\right] 
    _{(1+\vartheta)\tau} \\ & =
    \bigl[W^{-1,p}_D,W^{1,p}_D\bigr]_{\tau+\frac\vartheta2} \\ &
    = \bigl[
    L^p, W^{1,p}_D\bigr]_{2\tau-1+\vartheta} =
    \Dom_{L^p}(-\nabla\cdot\mu\nabla + 1)^{\tau-\frac12+\frac\vartheta2}.
  \end{align*}
  Next, let $\tau \geq \frac12$ and $\tau > \frac12$ if
  $\vartheta=0$. Then, again by reiteration and
  Lemma~\ref{lem::interpol-Bessel} we have
  \begin{equation*}
    \bigl[W^{-1,p}_D,W^{1,p}_D\bigr]_{\tau} =
    \left[\bigl[W^{-1,p}_D,W^{1,p}_D\bigr]_{\frac12},W^{1,p}_D\right]_{2\tau-1}
    =  \bigl[L^p,W^{1,p}_D\bigr]_{2\tau-1}.
  \end{equation*}
  Again, the Kato square root property implies that
  $\Dom_{L^p}(-\nabla\cdot\mu\nabla + 1)^{\frac12} =
  W^{1,p}_D$. Hence, by reiteration for fractional power domains
  (Lemma~\ref{lem::reiteration-frac-powers}) in~\eqref{eq::lemhelp1},
  \begin{align*}
    \bigl[X_\vartheta,Y_\vartheta\bigr]_{\tau} = & \left[
      \bigl[L^p,W^{1,p}_D\bigr]_{2\tau-1}, \bigl[L^p, \Dom_{L^p}(-\nabla
      \cdot \mu \nabla + 1)\bigr]_\tau\right] _\vartheta \\  =&
    \left[ \Dom_{L^p}(-\nabla\cdot\mu\nabla + 1)^{\tau-\frac12},
      \Dom_{L^p}(-\nabla \cdot \mu \nabla + 1)^\tau\right] _\vartheta \\ 
    =& \,\Dom_{L^p}(-\nabla\cdot\mu\nabla + 1)^{\tau-\frac12 +\frac\vartheta2}.
  \end{align*}
  This was the claim.
\end{proof}

\begin{lemma}
  \label{lem:frac-power-embed-hoelder}
  Let $\vartheta \in [0,1]$ and
  $\tau \in (\frac12+\frac{d}{2p}-\frac{\vartheta}2, 1]$. Assume that
  $\kappa \leq 1$ in Definition~\ref{def::regdom}. Set
  \begin{equation*}
    \sigma \coloneqq 2\tau-1+\vartheta-\frac{d}p \qquad \text{if}
    \qquad \tau  < 1-\frac\vartheta2
  \end{equation*}
  and
  \begin{equation*}
    \sigma \coloneqq 1-\frac{d}p +
    \bigl(2\tau-2+\vartheta\bigr)\Bigl(\kappa - 1 + \frac{d}p\Bigr)
    \qquad \text{if} \qquad \tau  \geq 1-\frac\vartheta2.
  \end{equation*}
  Then
  \begin{equation*}
    \bigl[X_\vartheta,Y_\vartheta\bigr]_{\tau} \embeds C^{0,\sigma}(\overline\Omega).
  \end{equation*}
\end{lemma}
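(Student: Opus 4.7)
The strategy is to reduce to a fractional power domain and then split along a Kato square root threshold. First, by Lemma~\ref{lem:interpolation-is-fracpower-domain},
\[
[X_\vartheta, Y_\vartheta]_\tau = \Dom_{L^p}(-\nabla\cdot\mu\nabla + 1)^\gamma, \qquad \gamma \coloneqq \tau - \tfrac12 + \tfrac\vartheta2.
\]
The standing assumption $\tau > \frac12 + \frac{d}{2p} - \frac\vartheta2$ translates to $2\gamma > d/p$, and the case distinction $\tau < 1 - \vartheta/2$ vs.\ $\tau \geq 1-\vartheta/2$ corresponds exactly to $\gamma < 1/2$ vs.\ $\gamma \geq 1/2$. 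This is what drives the two formulas for $\sigma$.

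In the subcritical case $\gamma < 1/2$, reiteration for fractional powers (Lemma~\ref{lem::reiteration-frac-powers}) together with the Kato square root identity $\Dom_{L^p}(-\nabla\cdot\mu\nabla+1)^{1/2} = W^{1,p}_D$ (Proposition~\ref{prop::kato-square-root}) yields
\[
\Dom_{L^p}(-\nabla\cdot\mu\nabla+1)^\gamma = [L^p, W^{1,p}_D]_{2\gamma}.
\]
Since $2\gamma > d/p > 1/p$, Lemma~\ref{lem::interpol-Bessel} identifies this with $H^{2\gamma,p}_D(\Omega)$, and Lemma~\ref{lem::Sobolev-embeddings} then gives the embedding into $C^{0, 2\gamma - d/p}(\overline\Omega) = C^{0,\sigma}(\overline\Omega)$.

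In the supercritical case $\gamma \geq 1/2$, the same reiteration/Kato argument gives
\[
\Dom_{L^p}(-\nabla\cdot\mu\nabla+1)^\gamma = [W^{1,p}_D, \Dom_{L^p}(-\nabla\cdot\mu\nabla+1)]_{2\gamma-1}.
\]
By Sobolev embedding $W^{1,p}_D \embeds C^{0,1-d/p}(\overline\Omega)$, and by Definition~\ref{def::regdom} with $\kappa \leq 1$ we have $\Dom_{L^p}(-\nabla\cdot\mu\nabla+1) \embeds C^{0,\kappa}(\overline\Omega)$. Monotonicity of complex interpolation reduces the claim to
\[
[C^{0,1-d/p}(\overline\Omega), C^{0,\kappa}(\overline\Omega)]_{2\gamma-1} \embeds C^{0,\sigma}(\overline\Omega),
\]
with $\sigma = (1-(2\gamma-1))(1-d/p) + (2\gamma-1)\kappa$, which matches the statement after a short algebraic rearrangement. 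The interpolation exponent identity here is a standard Hölder-space result, recorded as Proposition~\ref{prop::interpolationhoelder}. As a consistency check, at the transition $\gamma = 1/2$ both formulas yield $\sigma = 1 - d/p$, so the two cases glue cleanly.

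The main technical step to watch is the Hölder interpolation in the supercritical case: it is precisely the assumption $\kappa \leq 1$ that keeps us inside the classical Hölder scale and allows the convex-combination embedding to be used as stated. Everything else is a routine combination of the previously established Kato, reiteration, and Bessel-identification results.
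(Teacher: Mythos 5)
Your proposal is correct and follows essentially the same route as the paper: reduce via Lemma~\ref{lem:interpolation-is-fracpower-domain} to the fractional power domain $\Dom_{L^p}(-\nabla\cdot\mu\nabla+1)^{\tau-\frac12+\frac\vartheta2}$, split at the exponent $\frac12$, use the Bessel identification and Sobolev embedding (Lemma~\ref{lem::Sobolev-embeddings}) in the subcritical case, and use reiteration plus the Kato square root property and the H\"older interpolation result (Proposition~\ref{prop::interpolationhoelder}, applied after passing to the real method, which is where $\kappa\leq 1$ enters) in the supercritical case. The only cosmetic difference is that you spell out the intermediate step $[L^p,W^{1,p}_D]_{2\gamma}=H^{2\gamma,p}_D$ explicitly, which the paper leaves implicit in Lemma~\ref{lem:interpolation-is-fracpower-domain}.
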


\begin{proof}
  We argue via Lemma~\ref{lem:interpolation-is-fracpower-domain}. Let
  first $\tau < 1-\frac\vartheta2$. Then
  $\tau-\frac12+\frac\vartheta2 < \frac12$ and we find with Sobolev
  embedding (Lemma~\ref{lem::Sobolev-embeddings})
  \begin{equation*}
    \Dom_{L^p}(-\nabla\cdot\mu\nabla +
    1)^{\tau-\frac12+\frac\vartheta2} = H^{2\tau-1+\vartheta,p}_D
    \embeds C^{0,2\tau-1+\vartheta - \frac{d}p} = C^{0,\sigma}(\overline\Omega).
  \end{equation*}
  Now turn to $\tau \geq 1-\frac\vartheta2$. Then
  $\tau-\frac12+\frac\vartheta2 \geq \frac12$, hence,
  \begin{equation*}
    \Dom_{L^p}(-\nabla \cdot \mu \nabla
    +1)^{\tau-\frac12+\frac\vartheta2} = \left[\Dom_{L^p}(-\nabla \cdot \mu \nabla
      +1)^{\frac12},\Dom_{L^p}(-\nabla \cdot \mu \nabla
      +1)\right]_{2\tau-2+\vartheta}
  \end{equation*}
  and so, via Definition~\ref{def::regdom} for $\kappa$, the
  assumption $\kappa \leq 1$ and Proposition~\ref{prop::interpolationhoelder},
  \begin{align}\label{eq::fracpower-rewrite}
    \Dom_{L^p}(-\nabla \cdot \mu \nabla
    +1)^{\tau-\frac12+\frac\vartheta2}  & = \left[W^{1,p}_D,\Dom_{L^p}(-\nabla \cdot \mu \nabla
      +1)\right]_{2\tau-2+\vartheta} \\ & \embeds
    \bigl[C^{0,1-\frac{d}p}(\overline\Omega),
    C^{0,\kappa}(\overline\Omega)\bigr]_{2\tau-2+\vartheta} 
    \notag \\ & \embeds 
    \bigl(C^{0,1-\frac{d}p}(\overline\Omega),
    C^{0,\kappa}(\overline\Omega)\bigr)_{2\tau-2+\vartheta,\infty}   
    = C^{0,\sigma}(\overline\Omega).\notag\qedhere 
  \end{align}
\end{proof}

Unfortunately we did not succeed in proving an analogue to
Lemma~\ref{lem:frac-power-embed-hoelder} for the case $\kappa > 1$
under our general assumptions. As one sees
from~\eqref{eq::fracpower-rewrite}, with our technique the question
boils down to an interpolation result of the form
\begin{equation*}
  \bigl(C^{0,1-\frac{d}p}(\overline\Omega),
  C^{1,\kappa-1}(\overline\Omega)\bigr)_{\xi,\infty} \embeds 
  C^{0,1}(\overline\Omega)  \qquad \text{when} \qquad (1-\xi)(1-\tfrac{d}p) + \xi\kappa > 1.
\end{equation*}
We conjecture that such a result will be true for our case of a weak
Lipschitz domain. We can indeed prove it for example when $\Omega$
admits a $C^1$-boundary by extension techniques,
see~\cite[Example~1.1.9]{Lunardi2009}. This is also in line with the
rule of thumb that $\kappa > 1$ in Definition~\ref{def::regdom} will
require a more smooth situation than we generally suppose within this
work. For now we content ourselves with
making the following
\begin{assumption}
  \label{ass::fracpower}
  If $\kappa > 1$ in Definition~\ref{def::regdom}, we assume that we have for
  $\vartheta \in [0,1]$ and $\tau \in [1-\frac\vartheta2,1]$,
  \begin{equation*}
    \Dom_{L^p}(-\nabla \cdot \mu \nabla
    +1)^{\tau-\frac12+\frac\vartheta2} \embeds C^{0,\sigma}(\overline\Omega)
  \end{equation*}
  with
  \begin{equation*}
    \sigma \coloneqq 1 \wedge \bigl(1-\tfrac{d}p + (2\tau-2+\vartheta)(\kappa-1+\tfrac{d}p)\bigr).
  \end{equation*}
\end{assumption}
\begin{remark}
  \label{rem::fracpowerass}
  If in fact the optimal regularity
  \begin{equation*}
    \Dom_{L^p}(-\nabla \cdot \mu \nabla
    +1) = W^{2,p} \cap W^{1,p}_D
  \end{equation*}
  is valid, then we can circumvent the H\"older interpolation problem
  and Assumption~\ref{ass::fracpower} is indeed true. In this case,
  $\kappa = 2-\frac{d}{p}$ by Sobolev embedding, so
  in~\eqref{eq::fracpower-rewrite} we obtain
  \begin{align*}
    \Dom_{L^p}(-\nabla \cdot \mu \nabla
    +1)^{\tau-\frac12+\frac\vartheta2}  & =
    \left[W^{1,p}_D,\Dom_{L^p}(-\nabla \cdot \mu \nabla
      +1)\right]_{2\tau-2+\vartheta}
    \\ & = H^{2\tau+\vartheta-1,p} \cap
    W^{1,p}_D \embeds C^{0,\sigma}(\overline\Omega).
  \end{align*}
  (See the extension arguments in
  Remark~\ref{ex::convex-domain-regularity} together
  with~\cite[Lemma~3.1 and~Corollary~3.7]{Bechtel2019} to justify
  interpolation and Sobolev embeddings here.) Thus,
  Assumption~\ref{ass::fracpower} in fact only covers the quite
  particular case where we do not have optimal Sobolev regularity for
  $-\nabla\cdot\mu\nabla + 1$ in $L^p$, but still $\kappa > 1$ in
  Definition~\ref{def::regdom}. 
\end{remark}

\subsection{Regularity bootstrapping: simple case}
\label{subsec::simplecase}
We now present the bootstrapping procedure to obtain \emph{global}
solutions of~\eqref{eq::main-problem-abstract} on the
$X_\theta$-scale. To make the ideas more transparent, we first assume
that $\alpha \equiv 0$ and that $\Fkal \equiv f$ is constant,
resulting in the following problem:
\begin{align}\label{eq::main-problem-abstract-simple}
  \left\{ \qquad \begin{aligned}
      \partial_t u - \nabla \cdot \xi(u) \mu \nabla u + u &= f
      &&\text{in } L^s(I,X_\theta), \\ 
      u(0) &= u_0 && \text{on } (X_\theta,Y_\theta)_{1/s',s}. \\ 
    \end{aligned} \right.
\end{align}
Note that this problem is still quasilinear and therefore shares major
characteristic difficulties with~\eqref{eq::main-problem-abstract}. We
will come back to the full equation~\eqref{eq::main-problem-abstract}
in the subsequent section. The deciding factor on the size of $\theta$
that we can afford is the number
\begin{equation*}
  \bar\theta \coloneqq \frac{1-\frac{d}p}{2-\frac{d}p-\kappa}.
\end{equation*}
Note that $\bar \theta \geq 1-\frac{d}{p}$; moreover,
$\bar\theta \geq 1$, that is, we can reach $X_1 = L^p$, if and only if
$\kappa \geq 1$. With this, the main regularity result
for~\eqref{eq::main-problem-abstract-simple}, and thereby the first
version of the main theorem, reads as follows:

\begin{theorem}\label{thm::eqinXtheta}
  Adopt Assumption~\ref{ass::fracpower} if $\kappa>1$ and fix
  $\theta\in [0,\bar\theta) \cap [0,1]$. Suppose that
  \begin{equation*}
    \frac1s < \frac12-\frac{d}{2p} \quad \text{if} \quad \theta < 1-\frac{d}p \qquad \text{and} \qquad \frac{1}{s} <
    \frac{1-\frac{d}{p} -
      \theta}{2(\kappa-1+\frac{d}p)} + \frac\theta2 \quad \text{otherwise}.
  \end{equation*}
  If $f \in L^s(I,X_\theta)$ and
  $u_0 \in (X_\theta, Y_\theta)_{1/s',s}$, then the unique solution
  $u \in \W^{1,s}(I,(X_0,Y_0))$ of~\eqref{eq::main-problem-abstract-simple} on $X_0$ in fact
  satisfies $u \in \W^{1,s}(I,(X_\theta,Y_\theta))$, in particular,
  $u \in C^{\beta}(\overline I,C^{0,\theta}(\overline\Omega))$, and
  it is the unique solution to~\eqref{eq::main-problem-abstract-simple} on $X_\theta$. Moreover,
  the solution map $(u_0,f) \mapsto u$ associated with~\eqref{eq::main-problem-abstract-simple}
  maps bounded sets in
  $(X_\theta,Y_\theta)_{1/s',s} \times L^s(I,X_\theta)$ into bounded
  sets in $\W^{1,s}(I,(X_\theta,Y_\theta))$.
\end{theorem}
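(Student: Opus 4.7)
The plan is to establish Theorem~\ref{thm::eqinXtheta} via a finite iterative bootstrap, starting from the $X_0$-solution furnished by Theorem~\ref{thm::exonW1p}. Concretely, I would construct a finite chain $0 = \theta_0 < \theta_1 < \cdots < \theta_N = \theta$ and prove inductively that the given solution $u$ satisfies $u \in \W^{1,s}(I,(X_{\theta_k},Y_{\theta_k}))$ for all $k$. The induction start is provided directly by Theorem~\ref{thm::exonW1p}, noting that the current assumptions $f \in L^s(I,X_\theta)$ and $u_0 \in (X_\theta,Y_\theta)_{1/s',s}$ a fortiori give the input data for $X_0$ since $X_\theta \hookrightarrow X_0$ and $Y_\theta \hookrightarrow Y_0$ (this last fact is built into the definition of $Y_\theta$).

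For the inductive step, assume $u \in \W^{1,s}(I,(X_{\theta_k},Y_{\theta_k}))$. Combining the trace embedding in Lemma~\ref{lem::max-reg-spaces-embed} (with the interpolation parameter $\alpha < 1 - 1/s$ chosen close to this bound) with Lemma~\ref{lem:frac-power-embed-hoelder} for $\kappa \leq 1$ (resp.\ Assumption~\ref{ass::fracpower} for $\kappa > 1$) gives $u \in C^{\beta_k}(\overline I, C^{0,\sigma_k}(\overline\Omega))$ for a suitable H\"older exponent $\sigma_k > 0$. Because $\xi$ is locally Lipschitz and $u$ is bounded, $\xi(u) \in C(\overline I, C^{0,\sigma_k}(\overline\Omega))$ and is strictly positive. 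Choosing $\theta_{k+1} \in (\theta_k,\sigma_k)$ in the range permitted by the target $\theta \leq \bar\theta$, Theorem~\ref{thm::mpronxtheta} then furnishes nonautonomous maximal parabolic regularity for $-\nabla\cdot\xi(u)\mu\nabla + 1$ on $X_{\theta_{k+1}}$ with domain $Y_{\theta_{k+1}}$. This yields a unique solution $w \in \W^{1,s}(I,(X_{\theta_{k+1}},Y_{\theta_{k+1}}))$ to the linearized nonautonomous problem with data $(f,u_0)$. Since $w$ also solves~\eqref{eq::main-problem-abstract-simple} on $X_0$, uniqueness from Theorem~\ref{thm::exonW1p} forces $u = w$, completing the induction.

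The hard step is calibrating the chain $(\theta_k)$ so that it actually reaches $\theta$. Tracking the exponent produced by Lemma~\ref{lem:frac-power-embed-hoelder} as $\alpha \nearrow 1 - 1/s$ yields two regimes: for $\theta_k < 2/s$ one is in the first (Sobolev-embedding) branch and gains nearly $1 - 2/s - d/p > 0$ per step whenever $1/s < 1/2 - d/(2p)$, which covers the range $\theta < 1-d/p$ in finitely many steps; for $\theta_k \geq 2/s$ the second (interpolation-of-H\"older-spaces) branch gives the limiting achievable exponent $\sigma_\infty(\theta_k) = 1 - d/p + (\theta_k - 2/s)(\kappa - 1 + d/p)$. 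A direct computation shows that the inequality $\sigma_\infty(\theta) > \theta$ is precisely equivalent to the stated bound $1/s < \frac{1-d/p-\theta}{2(\kappa-1+d/p)} + \theta/2$, and since $\sigma_\infty(\vartheta) - \vartheta$ is affine and decreasing in $\vartheta$ (using $\kappa < 2 - d/p$), positivity at the target $\theta$ implies positivity on all of $[1-d/p,\theta]$, so a finite chain suffices. The limit $s \to \infty$ reproduces the sharp threshold $\bar\theta = (1-d/p)/(2-d/p-\kappa)$.

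The final H\"older claim $u \in C^\beta(\overline I, C^{0,\theta}(\overline\Omega))$ is a last application of Lemma~\ref{lem::max-reg-spaces-embed} combined with Lemma~\ref{lem:frac-power-embed-hoelder}/Assumption~\ref{ass::fracpower} in $\W^{1,s}(I,(X_\theta,Y_\theta))$. For the boundedness-preserving property of the solution map, I would track constants through the iteration: at each stage, boundedness of $(u_0,f)$ yields boundedness of $u$ in $\W^{1,s}(I,(X_{\theta_k},Y_{\theta_k}))$ via the equicontinuity part of Theorem~\ref{thm::mpronxtheta}, which requires exhibiting the relevant coefficient family as (pre)compact in $C(\overline I,C^{0,\sigma_k}(\overline\Omega))$; this follows from the \emph{compact} embedding part of Lemma~\ref{lem::max-reg-spaces-embed} at the previous level together with the local Lipschitz property of $\xi$, which maps bounded sets of $C(\overline I,C^{0,\sigma_{k-1}}(\overline\Omega))$ to bounded sets of the same space. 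Positivity of the coefficients is ensured by $\xi \geq \xi_\bullet > 0$ from Assumption~\ref{ass::problem-data}~(Co). Starting the boundedness chain at $k=0$ requires that the solution map of Theorem~\ref{thm::exonW1p} itself sends bounded data to bounded solutions, which can be extracted from the a priori estimate in its proof.
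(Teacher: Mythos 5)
Your proposal is correct and follows essentially the same route as the paper: start from the $X_0$-solution of Theorem~\ref{thm::exonW1p}, use the maximal-regularity trace embedding together with Lemma~\ref{lem:frac-power-embed-hoelder} (packaged in the paper as Lemma~\ref{lem::embeddings}) to extract H\"older regularity of $\xi(u)$, invoke Theorem~\ref{thm::mpronxtheta} on $X_{\theta_{k+1}}$, identify the solutions by uniqueness, and close the finite chain by observing that the strict inequality on $s$ yields a uniform positive gain per step (the paper fixes a single $\varepsilon$ up front where you argue via monotonicity of the affine gain function, but this is the same calibration), with the boundedness statement handled identically via the compact embeddings and the equicontinuity part of Theorem~\ref{thm::mpronxtheta}.
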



To make the structure of the proof of Theorem~\ref{thm::eqinXtheta}
more transparent, we first state the required H\"older embedding as an
auxiliary result. Of course we use $\kappa$ from
Definition~\ref{def::regdom}.  Note that the second case in the
following lemma is void if $\kappa = 1-\frac{d}p$.

\begin{lemma}\label{lem::embeddings}
  Suppose that Assumption~\ref{ass::fracpower} is valid if $\kappa>1$. For
  $\vartheta \in [0,1]$, let either
  \begin{equation*}
    0 \leq \sigma <  1-\frac{d}{p}
    \qquad \text{and} \qquad \frac1s < \frac{1-\frac{d}p - \sigma}2 + \frac\vartheta2.
  \end{equation*}
  or
  \begin{equation*}
    \sigma \in \Bigl[1-\tfrac{d}p,1-\tfrac{d}{p} + \vartheta\bigl(\kappa -
    1 + \tfrac{d}{p}\bigr)\Bigr) \cap [0,1] \qquad \text{and} \qquad \frac1s <
    \frac{1-\frac{d}{p} -
      \sigma}{2(\kappa-1+\frac{d}p)} + \frac\vartheta2.
  \end{equation*}
  Then
  $\W^{1,s}(I,(X_\vartheta,Y_\vartheta)) \embeds_c C(\overline I,
  C^{0,\sigma}(\overline\Omega))$.
\end{lemma}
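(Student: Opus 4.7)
The proof combines two ingredients already at our disposal. First, Lemma~\ref{lem::max-reg-spaces-embed} provides, for any pair $\tilde\theta \in [0, 1-1/s)$ and $\tau_0 \in (1/s, 1-\tilde\theta)$, a compact embedding
\[
  \W^{1,s}\bigl(I,(X_\vartheta,Y_\vartheta)\bigr) \embeds_c C^{0,\tau_0 - 1/s}\bigl(\overline I, [X_\vartheta,Y_\vartheta]_{\tilde\theta}\bigr),
\]
once we know $Y_\vartheta \embeds_c X_\vartheta$. Second, Lemma~\ref{lem:frac-power-embed-hoelder} (together with Assumption~\ref{ass::fracpower} when $\kappa > 1$) supplies spatial H\"older embeddings $[X_\vartheta,Y_\vartheta]_{\tilde\theta} \embeds C^{0,\sigma}(\overline\Omega)$ for appropriately matched $(\tilde\theta,\sigma)$. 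Composing with the obvious inclusion $C^{0,\tau_0-1/s}(\overline I, C^{0,\sigma}(\overline\Omega)) \embeds C(\overline I, C^{0,\sigma}(\overline\Omega))$, the statement reduces to selecting $\tilde\theta$ so that the prescribed $\sigma$ is reached and the admissibility $\tilde\theta < 1-1/s$ coincides with the stated hypothesis on $s$.

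In the first regime, $0 \leq \sigma < 1 - d/p$, I would solve $2\tilde\theta - 1 + \vartheta - d/p = \sigma$ from the first branch of Lemma~\ref{lem:frac-power-embed-hoelder}, yielding $\tilde\theta = (1+\sigma+d/p-\vartheta)/2$. A direct computation shows $\tilde\theta < 1 - \vartheta/2 \iff \sigma < 1-d/p$, and $1 - \tilde\theta = (1-d/p-\sigma)/2 + \vartheta/2$, so that $\tilde\theta < 1 - 1/s$ rearranges exactly to the first displayed bound on $1/s$ (the edge case $\sigma = 0$ is absorbed by replacing $\sigma$ with any slightly larger $\sigma' > 0$ still compatible with the strict hypothesis, followed by $C^{0,\sigma'} \embeds C$). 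In the second regime, $\sigma \in [1-d/p,\,1-d/p+\vartheta(\kappa-1+d/p))\cap[0,1]$, I would similarly invert the second branch, obtaining $\tilde\theta = 1 - \vartheta/2 + (\sigma - 1 + d/p)/(2(\kappa-1+d/p))$. Here the admissibility $\tilde\theta \in [1-\vartheta/2, 1)$ rephrases as the stated upper bound on $\sigma$, and $\tilde\theta < 1 - 1/s$ rephrases exactly as the second displayed bound on $1/s$.

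The only remaining task is to justify $Y_\vartheta \embeds_c X_\vartheta$, which then delivers compactness of the overall embedding since post-composition with continuous maps preserves compactness. For this, I would argue as in the proof of Lemma~\ref{lem:interpolation-is-fracpower-domain}: applying that lemma at $\tau = 1$ (or the analogous fractional-power identification based on Proposition~\ref{prop::divergence-operator-properties-no-iso-yet} and Proposition~\ref{prop::kato-square-root}) gives $Y_\vartheta = \Dom_{L^p}(-\nabla\cdot\mu\nabla+1)^{(1+\vartheta)/2} \embeds W^{1,p}_D$. Then Rellich--Kondrachov under Assumption~\ref{ass::domain-geometry} yields $W^{1,p}_D \embeds_c L^p$, and $L^p \embeds X_\vartheta$ is immediate from the definition of complex interpolation. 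The main effort in the proof is the arithmetic bookkeeping of the two regimes; no deeper obstacle is involved, since each constraint of Lemma~\ref{lem:frac-power-embed-hoelder} rearranges transparently to a hypothesis in the statement.
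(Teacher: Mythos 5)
Your proposal is correct and follows essentially the same route as the paper: the compact trace embedding of Lemma~\ref{lem::max-reg-spaces-embed} (with compactness justified via $Y_\vartheta \embeds W^{1,p}_D \embeds_c L^p \embeds X_\vartheta$) composed with Lemma~\ref{lem:frac-power-embed-hoelder}, choosing the interpolation parameter exactly as the paper does, namely $\tau = \frac12+\frac{d}{2p}-\frac\vartheta2+\frac\sigma2$ respectively $\tau = 1-\frac\vartheta2+\frac{\sigma-1+d/p}{2(\kappa-1+d/p)}$, and observing that the hypotheses on $s$ are precisely $\tau<1-\frac1s$. Your extra care with the $\sigma=0$ endpoint is a harmless refinement, not a different argument.
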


\begin{proof}
  By Bochner-Sobolev embeddings, see Lemma~\ref{lem::max-reg-spaces-embed},
  we get
  \begin{align}\label{eq::maxreg-embed-theta-cont}
    \W^{1,s}\bigl(I,(X_\vartheta,Y_\vartheta)\bigr) \embeds_c
    C\bigl(\overline I,
    [X_\vartheta,Y_\vartheta]_\tau\bigr),
  \end{align}
  as long as $0 \leq \tau < 1-\frac{1}{s}$. (Note that
  $Y_\theta \embeds W^{1,p}_D \embeds_c L^p \embeds
  X_\theta$.)
  Thus, the claim follows via Lemma~\ref{lem:frac-power-embed-hoelder}
  in the following cases:
  \begin{equation*}
    \sigma < 1 - \frac{d}p \qquad \text{and} \qquad \tau \coloneqq \frac12 + \frac{d}{2p} - \frac\vartheta2 +
    \frac\sigma2,
  \end{equation*}
  or (here we use Assumption~\ref{ass::fracpower} when $\kappa > 1$)
  \begin{equation*}
    \sigma \in \Bigl[1-\tfrac{d}p,1-\tfrac{d}{p} + \vartheta\bigl(\kappa -
    1 + \tfrac{d}{p}\bigr)\Bigr) \cap [0,1] \qquad \text{and} \qquad  \tau \coloneqq
    1-\frac\vartheta2 +
    \frac{\sigma-1+\frac{d}p}{2(\kappa-1+\frac{d}p)}. 
  \end{equation*}
  The conditions on $s$ in the statement correspond exactly to
  $\frac1s<1-\tau$, so $\tau < 1-\frac1s$, for each case, hence the
  choices of $\tau$ and $s$ are compatible and we obtain the
  assertion.
\end{proof}

We now come back to proving Theorem~\ref{thm::eqinXtheta}.

\begin{proof}[Proof of Theorem~\ref{thm::eqinXtheta}]
  We start with $\vartheta = 0$. With Theorem~\ref{thm::exonW1p} there
  exists a unique solution $u \in \W^{1,s}(I,(X_0,Y_0))$
  to~\eqref{eq::main-problem-abstract-simple} on $X_0$. Now fix $\epsilon > 0$ such that
  \begin{equation}\label{eq::defeps}
    \frac1s + \frac\eps2 < \frac12-\frac{d}{2p} \qquad \text{if} \quad
    \theta < 1-\frac{d}p
  \end{equation}
  and
  \begin{equation}
    \label{eq::defeps-alt}
    \frac{1}{s} + \frac\eps{2(\kappa-1+\frac{d}p)} <
    \frac{1-\frac{d}{p} -
      \theta}{2(\kappa-1+\frac{d}p)} + \frac\theta2 \qquad \text{if}
    \quad \theta \geq 1-\frac{d}p.
  \end{equation}
  We proceed iteratively as follows:
  \begin{enumerate}[(1)]
  \item Suppose that $u \in \W^{1,s}(I,(X_\vartheta,Y_\vartheta))$ for
    some $\vartheta \in [0,\theta)$. By Lemma~\ref{lem::embeddings},
    we have $u \in C(\overline I, C^{0,\sigma}(\overline\Omega)))$ for
    $\sigma \coloneqq \vartheta + \epsilon$ by the choices of $\eps$
    in~\eqref{eq::defeps}/\eqref{eq::defeps-alt} and of $s$. Note here
    that for $\theta \geq 1-\frac{d}p$, with the choice of $\eps$ as in~\eqref{eq::defeps-alt}, we have
    \begin{equation*}
      \frac1s + \frac\eps2 \leq
      \frac{1}{s} + \frac\eps{2(\kappa-1+\frac{d}p)} <
      \frac{1-\frac{d}{p} -
        \theta}{2(\kappa-1+\frac{d}p)} + \frac\theta2 \leq \frac12-\frac{d}{2p},
    \end{equation*}
    so $\eps$ is also valid for~\eqref{eq::defeps}.

  \item Set
    $\vartheta^+ \coloneqq \min(\vartheta +
    \frac{\epsilon}{2},\theta)$. Then Theorem~\ref{thm::mpronxtheta}
    with $\eta = \xi(u)$ yields nonautonomous maximal parabolic
    regularity of $-\nabla \cdot \xi(u)\mu \nabla + 1$ on
    $X_{\vartheta^+}$. Since
    $u_0 \in (X_\theta,Y_\theta)_{1/s',s} \embeds
    (X_{\vartheta^+},Y_{\vartheta^+})_{1/s',s}$ and
    $f \in L^s(I,X_\theta) \embeds L^s(I,X_{\vartheta^+})$ by
    assumption, we conclude that indeed
    $u \in \W^{1,s}(I,(X_{\vartheta^+},Y_{\vartheta^+}))$ is the unique
    solution to~\eqref{eq::main-problem-abstract-simple} in $X_{\vartheta^+}$.
 
  \item Note that $\eps$ is independent of the current $\vartheta$,
    so we can repeat the foregoing procedure a finite number of times until
    $\vartheta^+ = \theta$ is reached.
  \end{enumerate}
  The H\"older regularity stated for $u$ follows immediately from the
  construction, see also Lemma~\ref{lem::embeddings} and the choices
  of $s$ and $\theta$.
  
  Finally, we argue on the statement on mapping bounded sets into
  bounded sets. Such a result is already shown
  in~\cite[Corollary~5.8]{Meinlschmidt2016} for $\vartheta =
  0$. Furthermore, the embeddings of
  $\W^{1,s}(I,(X_\vartheta,Y_\vartheta))$ into $C(\overline I,C^{0,\sigma}(\overline\Omega))$
  which were utilized along the way are \emph{all} compact, cf.\
  Lemma~\ref{lem::embeddings}. Thus, the uniformity statement in
  Theorem~\ref{thm::mpronxtheta} is carried through every step of the
  bootstrapping argument.
\end{proof}

The above theorem may be regarded as
an extension of~\cite[Theorem~3.20]{Bonifacius2018}:

\begin{remark}
  Let $\kappa = 1-\frac{d}{p}$. Then we have
  $\theta < \bar\theta = 1-\frac{d}{p}$. Theorem~\ref{thm::eqinXtheta}
  yields optimal regularity solutions of~\eqref{eq::main-problem-abstract} on
  \[
    X_{\theta} = \bigl[W^{-1,p}_D,L^p\bigr]_{\theta} = H^{-\zeta,p}_D
  \]
  with $-\zeta = \theta-1$, i.e.,
  $\zeta = 1-\theta > 1-(1-\frac{d}{p}) = \frac{d}{p}$. This yields the same
  assumption on $\zeta$ as in~\cite{Bonifacius2018}. (Due to $p \leq d+1$, we
  also have $\theta = 1-\zeta < 1/p$ in this case.) For the same value of
  $\zeta$, we do however require less integrability for $s$ since with our
  technique we do not have to pass through $u(t) \in W^{1,p}_D$ as
  in~\cite[Proof of Thm.~3.20, Step~3]{Bonifacius2018}. Therefore, without
  imposing \emph{any} new assumption on $\kappa$, Theorem~\ref{thm::eqinXtheta}
  reproduces and improves the result from~\cite[Theorem 3.20]{Bonifacius2018}
  which was obtained there by much more involved reasoning.
\end{remark}

\subsection{Regularity bootstrapping: general case}
\label{subsec::generalcase}

Essentially the same bootstrapping technique as the one used in the
proof of Theorem~\ref{thm::eqinXtheta} also yields unique solutions of
the full problem~\eqref{eq::main-problem-abstract} in the $X_\theta$-scale. This is the
second version of our main result.

\begin{theorem}\label{thm::regforeq}
  Let $\theta$ and $s$ be as in Theorem~\ref{thm::eqinXtheta} and let
  $u_0 \in (X_\theta,Y_\theta)_{1/s',s}$. If $\theta \geq \frac1p$,
  suppose that $\alpha \equiv 0$. Assume that there is
  $\epsilon_0 > 0$ such that for all $0 < \epsilon < \epsilon_0$ and
  all $\vartheta \in [0,\theta]$ the implication
  \begin{align}\label{eq::cond-on-Fkal}
    \Bigl[w \in \W^{1,s}\bigl(I,(X_\vartheta,Y_\vartheta)\bigr) \quad \implies \quad \Fkal(w) \in L^s\bigl(I,X_{\min(\theta,\vartheta+\frac{\epsilon}{2})}\bigr)\Bigr]
  \end{align}
  holds true. Then, the unique solution
  $u \in \W^{1,s}(I,(X_0,Y_0))$ of~\eqref{eq::main-problem-abstract} on $X_0$ in fact
  satisfies $u \in \W^{1,s}(I,(X_\theta,Y_\theta))$, in particular,
  $u \in C^{\beta}(\overline I,C^{0,\theta}(\overline\Omega))$, and
  it is the unique solution to~\eqref{eq::main-problem-abstract} on $X_\theta$. 
\end{theorem}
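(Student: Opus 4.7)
The plan is to adapt the finite bootstrap argument used in the proof of Theorem~\ref{thm::eqinXtheta}, now carrying the nonlinear source $\Fkal(u)$ and the Robin perturbation $\Bkal_\alpha u$ through each iteration. First I would fix a common step size $\epsilon > 0$ meeting simultaneously the threshold conditions~\eqref{eq::defeps}/\eqref{eq::defeps-alt} from the proof of Theorem~\ref{thm::eqinXtheta} and the bound $\epsilon < \epsilon_0$ from the hypothesis on $\Fkal$. These are all open inequalities on $\epsilon$, so an admissible choice is available. Starting from the global solution $u \in \W^{1,s}(I,(X_0,Y_0))$ provided by Theorem~\ref{thm::exonW1p}, the bootstrap will proceed inductively along $\vartheta \in [0,\theta]$ in increments of $\epsilon/2$ up to $\theta$.

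For the generic induction step, assuming $u \in \W^{1,s}(I,(X_\vartheta,Y_\vartheta))$, Lemma~\ref{lem::embeddings} together with the choices of $s$ and $\epsilon$ yields $u \in C(\overline I,C^{0,\vartheta+\epsilon}(\overline\Omega))$. By the two-sided bounds on $\xi$ in Assumption~\ref{ass::problem-data}~(Co) and local Lipschitz continuity, the composition $\xi(u)$ lies in $C(\overline I,C^{0,\vartheta+\epsilon}(\overline\Omega))$ and is uniformly bounded away from zero. Setting $\vartheta^+ \coloneqq \min(\vartheta + \epsilon/2, \theta) < \vartheta + \epsilon$, Theorem~\ref{thm::mpronxtheta} then delivers nonautonomous maximal parabolic regularity of $-\nabla\cdot\xi(u)\mu\nabla + 1$ on $X_{\vartheta^+}$ with constant domain $Y_{\vartheta^+}$.

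To close the step I would verify that the right-hand side of the linearized equation
\[
  \partial u + (-\nabla\cdot\xi(u)\mu\nabla + 1)u = \Fkal(u) - \Bkal_\alpha u
\]
belongs to $L^s(I,X_{\vartheta^+})$ and that the initial datum lies in $(X_{\vartheta^+},Y_{\vartheta^+})_{1/s',s}$. The latter is immediate from $u_0 \in (X_\theta,Y_\theta)_{1/s',s}$ and the natural embedding between interpolation spaces. For the former, hypothesis~\eqref{eq::cond-on-Fkal} provides $\Fkal(u) \in L^s(I,X_{\vartheta^+})$ directly. The Robin contribution splits into two cases: for $\theta < 1/p$, we have $\vartheta^+ < 1/p$ as well, so picking $\theta' \in [\vartheta^+, 1/p)$ and applying Lemma~\ref{lem::trace} together with the uniform boundedness of $\alpha$ and $u(t) \in C(\overline\Omega)$ yields $\Bkal_\alpha u \in L^s(I,X_{\theta'}) \embeds L^s(I,X_{\vartheta^+})$; for $\theta \geq 1/p$ the term vanishes outright since $\alpha \equiv 0$ by assumption. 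Invoking maximal parabolic regularity on $X_{\vartheta^+}$, the linearized equation then admits a unique solution in $\W^{1,s}(I,(X_{\vartheta^+},Y_{\vartheta^+}))$; by uniqueness in the ambient $X_0$-setting from Theorem~\ref{thm::exonW1p} this solution must coincide with $u$, completing the inductive step. Since $\epsilon$ is independent of $\vartheta$, the iteration terminates after finitely many repetitions, and the H\"older-continuity of $u$ follows from Lemma~\ref{lem::max-reg-spaces-embed} and Corollary~\ref{cor::interpol-cont-embed}.

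I expect the principal difficulty to be organizational rather than analytic: keeping the constraints on $\epsilon$ consistent throughout the scale, cleanly separating the two regimes $\theta < 1/p$ and $\theta \geq 1/p$ for the Robin term, and making sure that each appeal to~\eqref{eq::cond-on-Fkal} respects the current intermediate level $\vartheta^+$. The analytic content—-Hölder embeddings for the maximal regularity space and constant-domain nonautonomous maximal parabolic regularity—-is entirely provided by Lemma~\ref{lem::embeddings} and Theorem~\ref{thm::mpronxtheta}.
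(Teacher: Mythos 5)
Your proposal is correct and follows essentially the same route as the paper: the paper's proof likewise reduces to the bootstrap of Theorem~\ref{thm::eqinXtheta} with $f$ replaced by $g = \Fkal(u) - \Bkal_\alpha u$, verifying $g \in L^s(I,X_{\vartheta^+})$ at each step via hypothesis~\eqref{eq::cond-on-Fkal} for the nonlinearity and Lemma~\ref{lem::trace} for the Robin term (which is void when $\theta \geq \frac1p$ since $\alpha \equiv 0$ there). Your explicit care in choosing $\epsilon < \epsilon_0$ compatibly with the thresholds of Theorem~\ref{thm::eqinXtheta} is a detail the paper leaves implicit but is handled identically in spirit.
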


\begin{proof}
  Since the main line of the proof is the same as for
  Theorem~\ref{thm::eqinXtheta}, we only comment on the changes: The
  fixed right hand side $f$ in~\eqref{eq::main-problem-abstract-simple} is replaced by
  \begin{equation*}
    g \coloneqq \Fkal(u)-\Bkal_\alpha u.
  \end{equation*}
  Consequently, we have to make sure that if
  $u\in\W^{1,s}(I,(X_\vartheta,Y_\vartheta))$, then
  $g \in L^s(I,X_{\vartheta^+})$ in order to go through step (ii) of
  the bootstrapping argument as before. For $\Fkal(u)$, this is
  ensured exactly by the assumed property~\eqref{eq::cond-on-Fkal},
  while we refer to Lemma~\ref{lem::trace} for $\Bkal_\alpha u$;
  recall~\eqref{eq:maxreg-is-hoelder}.  The latter term
  $\Bkal_\alpha u$ is void if $\theta \geq \frac1p$.
\end{proof}

\begin{corollary}
  \label{cor::regforeq-uniform}
  Adopt the assumptions of Theorem~\ref{thm::regforeq}, but let
  $\alpha \equiv 0$ also when $\theta < \frac1p$. Denote by $\Gkal$
  the family of nonlinear functions $\Fkal$ satisfying the standing
  Assumption~\ref{ass::problem-data} and
  assumption~\eqref{eq::cond-on-Fkal}, and for which in addition there is a
  constant $C_{\Gkal} > 0$ for which
  \[
     \sup_{w \in
      \W^{1,s}(I,(X_\theta,Y_\theta))}\bigl\lVert \Fkal(w)
    \bigr\rVert_{L^s(I,X_\theta)} \leq
    C_\Gkal.
  \]
  Denote by $u_\Fkal$ the unique solution
  $u \in \W^{1,s}(I,(X_\theta,Y_\theta))$ to~\eqref{eq::main-problem-abstract} supplied by
  Theorem~\ref{thm::regforeq}. Then, the set
  $\{u_\Fkal\colon\Fkal\in\Gkal\}$ is bounded in
  $\W^{1,s}(I,(X_\theta,Y_\theta))$.
\end{corollary}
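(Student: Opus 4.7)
The plan is to revisit the inductive bootstrapping from the proof of Theorem~\ref{thm::regforeq} and propagate uniform bounds through the family $\Gkal$ at every stage, invoking the equicontinuity statement in Theorem~\ref{thm::mpronxtheta}. Since $\alpha\equiv 0$, the Robin term drops out entirely. At the base level $\vartheta=0$, the continuous embedding $X_\theta\embeds X_0$ turns the hypothesis $\sup_w\lVert\Fkal(w)\rVert_{L^s(I,X_\theta)}\leq C_\Gkal$ into a uniform bound on $\lVert\Fkal(u_\Fkal)\rVert_{L^s(I,X_0)}$ over $\Fkal\in\Gkal$. The ``bounded sets to bounded sets'' statement at the end of Theorem~\ref{thm::exonW1p} (or directly~\cite[Corollary~5.8]{Meinlschmidt2016}) then furnishes a constant $M_0$ with $\lVert u_\Fkal\rVert_{\W^{1,s}(I,(X_0,Y_0))}\leq M_0$ uniformly in $\Fkal\in\Gkal$.

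Next I would follow the finite sequence $\vartheta_0=0<\vartheta_1<\dots<\vartheta_N=\theta$ with $\vartheta_{k+1}=\min(\vartheta_k+\epsilon/2,\theta)$ from the proof of Theorem~\ref{thm::eqinXtheta}, assuming inductively that $\lVert u_\Fkal\rVert_{\W^{1,s}(I,(X_{\vartheta_k},Y_{\vartheta_k}))}\leq M_k$ uniformly in $\Fkal\in\Gkal$. By the compact embedding in Lemma~\ref{lem::embeddings}, $\{u_\Fkal\colon\Fkal\in\Gkal\}$ lies in a relatively compact subset of $C(\overline I,C^{0,\vartheta_k+\epsilon}(\overline\Omega))$. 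Because $\xi$ is locally Lipschitz with $0<\xi_\bullet\leq\xi\leq\xi^\bullet$, the Nemytskii map $v\mapsto\xi(v)$ is bounded on Hölder-bounded sets and continuous on $C(\overline I\times\overline\Omega)$; coupled with the interpolation inequality $\lVert\cdot\rVert_{C^{0,\sigma}}\lesssim\lVert\cdot\rVert_C^{1-\sigma/(\vartheta_k+\epsilon)}\lVert\cdot\rVert_{C^{0,\vartheta_k+\epsilon}}^{\sigma/(\vartheta_k+\epsilon)}$ this shows that $\Ekal_k\coloneqq\{\xi(u_\Fkal)\colon\Fkal\in\Gkal\}$ is relatively compact in $C(\overline I,C^{0,\sigma}(\overline\Omega))$ for any $\sigma\in(\vartheta_{k+1},\vartheta_k+\epsilon)$, and is bounded below by $\xi_\bullet>0$. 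The equicontinuity statement of Theorem~\ref{thm::mpronxtheta} applied to the compact closure $\overline{\Ekal_k}$ at level $X_{\vartheta_{k+1}}$ then yields a single constant $K_k$ bounding the nonautonomous maximal regularity isomorphism $(\partial-\nabla\cdot\eta\mu\nabla+1,\gamma_0)^{-1}$ uniformly in $\eta\in\overline{\Ekal_k}$. Specializing to $\eta=\xi(u_\Fkal)$, and using $X_\theta\embeds X_{\vartheta_{k+1}}$ together with $(X_\theta,Y_\theta)_{1/s',s}\embeds(X_{\vartheta_{k+1}},Y_{\vartheta_{k+1}})_{1/s',s}$, produces
\[
  \lVert u_\Fkal\rVert_{\W^{1,s}(I,(X_{\vartheta_{k+1}},Y_{\vartheta_{k+1}}))}\leq K_k\bigl(C_\Gkal+\lVert u_0\rVert_{(X_\theta,Y_\theta)_{1/s',s}}\bigr)\eqqcolon M_{k+1}.
\]
After $N$ iterations the claim is proved.

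The main obstacle is the Nemytskii step: Theorem~\ref{thm::mpronxtheta} demands a genuinely \emph{compact} set $\Ekal_k$ in $C(\overline I,C^{0,\sigma}(\overline\Omega))$ with $\sigma>\vartheta_{k+1}$, rather than merely a bounded one. This forces a controlled loss of Hölder exponent, from the value $\vartheta_k+\epsilon$ supplied by the induction hypothesis down to any strictly smaller $\sigma>\vartheta_{k+1}$, but the factor $\tfrac12$ in the definition $\vartheta_{k+1}=\min(\vartheta_k+\epsilon/2,\theta)$ provides exactly the margin needed. Everything else is a routine propagation of constants through the linear nonautonomous equation $\partial u-\nabla\cdot\xi(u_\Fkal)\mu\nabla u+u=\Fkal(u_\Fkal)$ with $u(0)=u_0$, which $u_\Fkal$ solves on each $X_{\vartheta_{k+1}}$.
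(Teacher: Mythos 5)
Your argument is correct and is essentially the paper's own: the paper proves this corollary in one line by invoking the bounded-sets-into-bounded-sets statement of Theorem~\ref{thm::eqinXtheta} with $f = \Fkal(u_\Fkal)$, and your level-by-level propagation of uniform bounds (compact embeddings from Lemma~\ref{lem::embeddings} feeding the equicontinuity statement of Theorem~\ref{thm::mpronxtheta}, with the base case from~\cite[Corollary~5.8]{Meinlschmidt2016}) is exactly the content of that statement's proof, just written out in detail. Only a cosmetic slip: the bounded-sets property is stated in Theorem~\ref{thm::eqinXtheta}, not at the end of Theorem~\ref{thm::exonW1p}, but your parenthetical citation covers the base level correctly.
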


\begin{proof}
  This follows directly from the statement about the solution operator
  mapping bounded sets into bounded sets in
  Theorem~\ref{thm::eqinXtheta}, with $f = \Fkal(u)$.
\end{proof}

\begin{remark}
  We mention a few examples for nonlinearities fulfilling the
  assumption in Theorem~\ref{thm::regforeq}:
  \begin{enumerate}[(1)]
  \item Any nonlinear function given by a Nemytskii operator induced
    by a sufficiently regular real function with suitable growth
    bounds---or a bounded one--- and also suitably monotone ones will do the trick. Analogous boundary
    terms can be treated only for $\theta < \frac{1}{p}$.
 
  \item Similarly, the nonlocal term from
    Remark~\ref{rem::assumptdisc_2} obviously
    fulfills~\eqref{eq::cond-on-Fkal}.
 
  \item A drift-type term as in Remark~\ref{rem::assumptdisc_2} is
    also covered by the assumptions, if the following additional
    (sufficient) conditions are fulfilled:
    \begin{align*}
      g \in L^s(I,H^{1+\theta,p}), \quad \text{and} \quad 
      \mu \in C^{0,\min(1,\tau)}(\overline\Omega)^{d\times d} \text{ for some } \tau > \theta.
    \end{align*}
    Let us verify~\eqref{eq::cond-on-Fkal}. We are quite concise and borrow
    several arguments from~\cite[Section~3]{Meinlschmidt2021}. Suppose first that
    $\vartheta+\frac\eps2 \leq \theta$. For almost every $t \in I$, we
    have
    $\nabla g(t) \in H^{\theta,p} \embeds
    H^{\vartheta+\frac\eps2,p}$. On the other hand, from
    $w \in \W^{1,s}(I,(X_\vartheta,Y_\vartheta))$ it follows that
    $w \in C(\overline I, C^{0,\vartheta+\epsilon})$ by
    Lemma~\ref{lem:frac-power-embed-hoelder} as in the proof of
    Theorem~\ref{thm::regforeq}. But then $w(t)\mu$ is H\"older
    continuous of degree $> \vartheta+\frac\eps2$ and thus a
    multiplier on $H^{\vartheta+\frac\eps2,p}$ for every $t \in
    \overline I$. Therefore,
    \begin{align*}
      \lVert \nabla \cdot w \mu \nabla g
      \rVert_{L^s(I,X_{\vartheta+\frac{\epsilon}{2}})} &\leq \lVert
      w \mu \nabla g
      \rVert_{L^s(I,H^{\vartheta+\frac{\epsilon}{2},p})} \\ &\leq
      \sup_{t\in\overline I}\,\lVert w(t)\mu \rVert_{\Lkal(H^{\vartheta+\frac{\epsilon}{2},p})} 
      \lVert g
      \rVert_{L^s(I,H^{1+\vartheta+\frac{\epsilon}{2},p})} < \infty.
    \end{align*}
    The analogous argument applies when
    $\theta \geq \vartheta+\frac\eps2$. (The case $\theta = 1$ is
    special but straightforward since Lipschitz functions are
    multipliers on $H^{1,p}$.)
  \end{enumerate}
\end{remark}

\begin{remark}
  \label{rem::optimal-case} We comment on the optimal case $\Dom_{L^p}(-\nabla \cdot \mu \nabla + 1) = W^{2,p} \cap
    W^{1,p}_D$.
  \begin{enumerate}[(1)]
  \item It was already mentioned in Remark~\ref{rem:kappa-assumption}
    that in the optimal case, $\kappa = 2 - d/p$. Also, we can identify
    $Y_\theta$ in a more explicit manner. Indeed, as in
    Remark~\ref{rem::fracpowerass} we have
    \begin{equation*}
      Y_\theta = \bigl[W^{1,p}_D,W^{2,p} \cap W^{1,p}_D\bigr]_\theta =
      H^{1+\theta,p} \cap W^{1,p}_D.
    \end{equation*}
    Thus, $Y_\theta$ coincides with $H^{1+\theta,p}_D$ if $\theta < 1/p$. Moreover, recall that
    for $\theta \neq 1/p$, Lemma~\ref{lem::interpol-Bessel} tells us that $X_\theta$ is given
    precisely by $H^{\theta-1,p}_D$ and $H^{\theta-1,p}$ when $\theta < 1/p$ and $\theta> 1/p$,
    respectively. In the case $\theta = 1$, the condition on $s$ in
    Theorems~\ref{thm::eqinXtheta} and~\ref{thm::regforeq} becomes
    $\frac1s < \frac12 - \frac{d}{2p}$ which was the one posed in
    Assumption~\ref{ass::domain-geometry}, and which is also required for the starting point
    giving a global solution in the first place, Theorem~\ref{thm::exonW1p}. In this sense, it
    is optimal.
  \item In the optimal case, Theorem~\ref{thm::regforeq} with $\theta = 1$ essentially
    reproduces~\cite[Theorem~2.3]{Casas2018}.  In fact, we can even avoid the monotonicity
    assumption there and we do not have to require that the input data is $2s$-integrable in
    time to obtain an $s$-integrable solution. We concede that the assumption on $s$ is weaker
    in~\cite{Casas2018}, though; this is because the authors there do not pass through
    $\theta$-H\"older coefficients, and, thus, solutions, but only require \emph{continuous}
    ones. Note that there seems to be a mistake in the assumptions on time integrability
    in~\cite[Theorem~2.3]{Casas2018} as we have validated with the authors. In our present
    notation, the condition there should be $\frac1s < 1 - \frac{d}{2p}$ which is still less
    strict than ours. However, as mentioned above, for the optimal $\kappa = 2-d/p$ and
    $\theta = 1$, our requirement on $s$ falls back to the one required for
    Theorem~\ref{thm::exonW1p}, the starting point for the whole procedure. In this sense, it is
    optimal for our technique.
  \end{enumerate}
\end{remark}

\appendix

\section{Results related to interpolation theory}
\label{app::interpolation}

In the following we collect some auxiliary results related to
interpolation theory that are required in
Sections~\ref{sec::quasilinonXtheta} and~\ref{subsec::eqonXtheta}.

\subsection{Bilinear interpolation}
\label{sec::RehbergMeinlschmidt}

Let $A, B, X, Y$ be Banach spaces with continuous and dense inclusions
$A \embeds B$ and $X \embeds Y$. Further, let bounded
linear maps $\Phi\colon A \to \Lkal(X)$,
$\Psi\colon B \to \Lkal(Y)$ be given such that the following
compatibility property holds:
\begin{align}\label{compatibility}
  \text{for all}~a \in A\colon\quad \Psi(a)|_X = \Phi(a). 
\end{align}

\begin{theorem}\label{thm::interpolationthm}
  Adopt the above setting and~\eqref{compatibility}. Then, for every
  $\theta \in [0,1]$, the restriction of $\Psi$ to $[A,B]_\theta$
  gives rise to a bounded linear operator
  \[
    \Psi\colon [A,B]_\theta \to \Lkal\bigl([X,Y]_\theta\bigr).
  \]
\end{theorem}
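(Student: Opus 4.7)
The plan is to recognize the statement as a direct instance of Calder\'on's bilinear complex interpolation theorem, see~\cite[Theorem~4.4.1]{Bergh1976}. The substantive move is to bundle $\Phi$ and $\Psi$ into a single bilinear operator whose two endpoint estimates are exactly the hypotheses that have been placed.

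First, I would define the bilinear operator
\[
  T \colon B \times Y \to Y, \qquad T(b,y) \coloneqq \Psi(b)y,
\]
which is continuous by assumption with $\|T(b,y)\|_Y \leq \|\Psi\|_{\Lkal(B,\Lkal(Y))} \|b\|_B \|y\|_Y$. Since $A \embeds B$ and $X \embeds Y$, the same formula makes sense on the subspace $A \times X \subset B \times Y$, and the compatibility condition $\Psi(a)|_X = \Phi(a)$ for $a \in A$ forces the corresponding values to land in $X$. Hence $T$ restricts to a continuous bilinear map
\[
  T \colon A \times X \to X, \qquad T(a,x) = \Phi(a)x,
\]
with $\|T(a,x)\|_X \leq \|\Phi\|_{\Lkal(A,\Lkal(X))} \|a\|_A \|x\|_X$. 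These two boundedness statements are precisely the endpoint estimates required to apply bilinear complex interpolation to the single bilinear operator $T$ on the compatible couples $(A,B)$, $(X,Y)$ and the target couple $(X,Y)$.

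Invoking~\cite[Theorem~4.4.1]{Bergh1976} then yields that $T$ descends to a continuous bilinear map
\[
  T \colon [A,B]_\theta \times [X,Y]_\theta \to [X,Y]_\theta
\]
with operator norm controlled by $\|\Phi\|^{1-\theta}\|\Psi\|^{\theta}$. Re-reading this estimate in the first variable gives exactly that for every $a \in [A,B]_\theta$ the operator $\Psi(a)$ is bounded on $[X,Y]_\theta$, and that the resulting assignment $a \mapsto \Psi(a)$ is linear and continuous $[A,B]_\theta \to \Lkal([X,Y]_\theta)$, which is the assertion.

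All nontrivial content is absorbed in the cited bilinear interpolation theorem, so there is no real analytic obstacle. The only point that requires care is the bookkeeping around the compatible couple structure, and in particular that the compatibility hypothesis $\Psi(a)|_X = \Phi(a)$ is exactly what is needed to interpret the two given continuity statements as endpoint estimates for the \emph{same} bilinear operator on $(A+B)\times(X+Y) = B \times Y$.
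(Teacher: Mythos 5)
Your proposal is correct and follows essentially the same route as the paper's own proof: both bundle $\Phi$ and $\Psi$ into a single bilinear map $(b,y)\mapsto \Psi(b)y$ on $B\times Y$, use the compatibility condition to get the endpoint estimate on $A\times X$, and conclude by bilinear complex interpolation (the paper cites~\cite[Chapter~1.19.5]{Triebel1995} where you cite Bergh--L\"ofstr\"om, but the content is the same), obtaining the norm bound $\lVert \Phi\rVert^{1-\theta}\lVert \Psi\rVert^{\theta}$ and reading the result off in the first variable.
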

\begin{proof}
  Consider the continuous bilinear map
  \[
    \ell\colon B \times Y \to Y, \qquad (b,y) \mapsto
    \ell(b,y) = 
    \Psi(b)y,
  \]
  and its restriction to $A \times X$ which coincides with
  $(a,x) \mapsto \Phi(a)x \in X$ on $A \times X$ due
  to~\eqref{compatibility}. We thus have the estimates
  \begin{align*}
    \bigl\lVert \ell(b,y) \bigr\rVert_Y &\leq \lVert \Psi(b)
    \rVert_{\Lkal(Y)} \lVert y \rVert_Y \leq \lVert \Psi
    \rVert_{\Lkal(B,\Lkal(Y))} \lVert b \rVert_B \lVert y \rVert_Y \intertext{and}
    \bigl\lVert \ell(a,x) \bigr\rVert_Y &\leq \lVert \Phi(a)
    \rVert_{\Lkal(X)} \lVert x \rVert_X \leq \lVert \Phi
    \rVert_{\Lkal(A,\Lkal(X))} \lVert a \rVert_A \lVert x \rVert_X. 
  \end{align*}
  Then, bilinear interpolation as in~\cite[Chapter~1.19.5]{Triebel1995}
  shows that $\ell$ gives rise to a continuous bilinear mapping
  $[A, B]_\theta \times [X,Y]_\theta \to [X,Y]_\theta$ whose norm is
  bounded by
  $\lVert \Phi \rVert_{\Lkal(A,\Lkal(X))}^{1-\theta} \lVert \Psi
  \rVert_{\Lkal(B,\Lkal(Y))}^\theta$. Hence, for $c \in
    [A,B]_\theta$ and $z \in [X,Y]_\theta$ we find
  \[
    \bigl\lVert \Psi(c) z \bigr\rVert_{[X,Y]_\theta} = \lVert \ell(c,z)
    \rVert_{[X,Y]_\theta} \leq \lVert \Phi
    \rVert_{\Lkal(A,\Lkal(X))}^{1-\theta} \lVert \Psi
    \rVert_{\Lkal(B,\Lkal(Y))}^\theta \cdot \lVert c
    \rVert_{[A,B]_\theta} \lVert z \rVert_{[X,Y]_\theta}.
  \]
  This shows that
  $\Psi \in \Lkal\bigl([A,B]_\theta, \Lkal([X,Y]_\theta)\bigr)$, as
  desired.
\end{proof}

\subsection{Interpolation of H\"older spaces}

The following result is well known for $E=\R^d$ or $E$ being a domain
with sufficiently smooth boundary, cf.\
e.g.~\cite{Lunardi1995,Triebel1995}. Since we did not find an explicit
reference for a less regular setting in the literature we decided to
sketch the short proof, although the appearing techniques are
standard.

\begin{proposition}\label{prop::interpolationhoelder}
  For any nonempty compact set $E \subseteq \R^d$ there holds
  \[
    \bigl(C(E),C^{0,1}(E)\bigr)_{\theta,\infty} = C^{0,\theta}(E), \qquad \theta
    \in (0,1),
  \]
  and
  \[
    C^{0,\vartheta}(E) \embeds \bigl[C(E),C^{0,1}(E)\bigr]_\theta
    \embeds C^{0,\theta}(E), \qquad \theta \in (0,1),~\vartheta \in (\theta,1).
  \]
\end{proposition}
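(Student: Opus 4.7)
The plan is to first prove the real-interpolation identity $(C(E), C^{0,1}(E))_{\theta,\infty} = C^{0,\theta}(E)$ by a direct K-functional computation, and then to derive both complex-interpolation embeddings from abstract comparison results between real and complex interpolation spaces.

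For the real-interpolation identity I would work with $K(t,f) = \inf\{\|g\|_{C(E)} + t\|h\|_{C^{0,1}(E)} : f = g+h\}$. The direction $(C(E),C^{0,1}(E))_{\theta,\infty} \embeds C^{0,\theta}(E)$ is immediate: for any decomposition $f = g+h$ and $x,y \in E$, the triangle inequality gives $|f(x)-f(y)| \leq 2\|g\|_{C(E)} + |x-y|\,[h]_{C^{0,1}(E)}$, and optimizing over decompositions produces $|f(x)-f(y)| \lesssim K(|x-y|,f) \leq |x-y|^\theta \sup_{s>0} s^{-\theta} K(s,f)$. For the reverse embedding, given $f \in C^{0,\theta}(E)$, I would build Lipschitz approximations by inf-convolution,
\[
g_L(x) \coloneqq \inf_{y \in E}\bigl\{f(y) + L\,|x-y|\bigr\}, \qquad L > 0,
\]
which is $L$-Lipschitz on $\R^d$ (hence on $E$) and bounded by $\|f\|_{C(E)}$ on $E$. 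A one-variable optimization of $r \mapsto [f]_{C^{0,\theta}} r^\theta - Lr$ shows $0 \leq f - g_L \leq C_\theta [f]_{C^{0,\theta}}^{1/(1-\theta)} L^{-\theta/(1-\theta)}$ pointwise on $E$. Using the decomposition $f = (f-g_L) + g_L$ and balancing $L \sim [f]_{C^{0,\theta}} s^{-(1-\theta)}$ against $s$ then yields $K(s,f) \lesssim s^\theta \|f\|_{C^{0,\theta}(E)}$ for every $s > 0$, finishing the identity.

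For the complex-interpolation embeddings I would invoke the classical chain
\[
(X_0, X_1)_{\theta_2, \infty} \embeds (X_0, X_1)_{\theta_1, 1} \embeds [X_0, X_1]_{\theta_1} \embeds (X_0, X_1)_{\theta_1, \infty}, \qquad 0<\theta_1<\theta_2<1,
\]
applied with $X_0 = C(E)$ and $X_1 = C^{0,1}(E)$; the leftmost embedding follows by splitting the defining integral of the $(\theta_1,1)$-norm at $s=1$ and using $K(s,f) \lesssim s^{\theta_2}$, and the remaining two are textbook, see~\cite{Bergh1976}. Together with the real-interpolation identity, the right half of the chain (at $\theta_1 = \theta$) gives $[C(E), C^{0,1}(E)]_\theta \embeds C^{0,\theta}(E)$, while the left half (at $\theta_1 = \theta$, $\theta_2 = \vartheta$), combined with $C^{0,\vartheta}(E) = (C(E), C^{0,1}(E))_{\vartheta,\infty}$, gives $C^{0,\vartheta}(E) \embeds [C(E), C^{0,1}(E)]_\theta$.

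The main obstacle is producing the Lipschitz approximation $g_L$ without access to standard extension operators that would typically require boundary regularity of $E$. The inf-convolution is the natural substitute since it is intrinsic to the metric structure that $E \subseteq \R^d$ inherits from the ambient space and requires no regularity of $E$ beyond compactness. Once the K-functional estimate is in place, the remainder of the argument is pure abstract interpolation theory and no longer interacts with the specifics of H\"older spaces.
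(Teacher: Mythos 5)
Your argument is correct, but it follows a genuinely different route from the paper. The paper does not compute anything intrinsically: it quotes the known identity for $E=\R^d$ (Lunardi), transports it to general compact $E$ via the Whitney extension operator, which extends \emph{simultaneously} $C(E)\to C(\R^d)$ and $C^{0,1}(E)\to C^{0,1}(\R^d)$, and then applies the retraction--coretraction theorem; the complex-interpolation embeddings are likewise delegated to the standard comparison between the real and complex scales. You instead prove the real-interpolation identity directly by a $K$-functional computation, with the inf-convolution $g_L(x)=\inf_y\{f(y)+L|x-y|\}$ playing the role that the extension operator plays in the paper; your error bound $0\le f-g_L\lesssim [f]_{C^{0,\theta}}^{1/(1-\theta)}L^{-\theta/(1-\theta)}$ and the balancing $L\sim[f]_{C^{0,\theta}}s^{-(1-\theta)}$ are correct (note only that this balancing gives $K(s,f)\lesssim s^\theta\|f\|_{C^{0,\theta}}$ directly for $s\le1$, while for $s\ge1$ one should simply use $K(s,f)\le\|f\|_{C(E)}\le s^\theta\|f\|_{C^{0,\theta}}$ --- a cosmetic point). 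The subsequent chain $(X_0,X_1)_{\vartheta,\infty}\embeds(X_0,X_1)_{\theta,1}\embeds[X_0,X_1]_\theta\embeds(X_0,X_1)_{\theta,\infty}$ is legitimate here: the first inclusion, which in general needs some care, is fine for this ordered couple $C^{0,1}(E)\embeds C(E)$ exactly by the splitting at $s=1$ you describe, and the other two are the standard real--complex comparisons. What your approach buys is self-containedness and generality --- it never uses that $E$ sits in $\R^d$ beyond its metric, so it would work verbatim for any compact metric space, whereas Whitney extension is specific to subsets of $\R^d$; what the paper's approach buys is brevity, since everything is reduced to cited results, and the simultaneous-extension/retraction mechanism is reusable for other function-space identities on rough sets.
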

 
\begin{proof}
  It suffices to prove the first statement. The second one follows
  from standard embeddings between real and complex interpolation
  scales. First, note that the claim holds for $E = \R^d$,
  see~\cite{Lunardi1995}, below the proof of Example~1.1.8, for
  instance. To extend this to general $E$, we use  the Whitney
  extension operator which provides a simultaneous
  extension operator $C(E) \to C(\R^d)$ and
  $C^{0,1}(E) \to C^{0,1}(\R^d)$,
  see~\cite[Chapter~VI.2]{Stein1970}. Then an application of the
  retraction-coretraction theorem~\cite[Theorem~1.2.4]{Triebel1995}
  finishes the proof.
\end{proof}

\subsection{Domains on interpolation spaces }

The following two results are used in the bootstrapping argument in
Section~\ref{subsec::eqonXtheta}.

\begin{lemma}\label{lem::domain1}
  Let $A$ be a closed operator on a Banach space $X$ with domain
  $\Dom_X(A)$ such that $A\colon \Dom_X(A) \to X$ is an
  isomorphism. Further, let $Y \embeds X$ be a subspace of $X$ such
  that $A$ is also an isomorphism $\Dom_Y(A) \to Y$. Then for
  $\theta \in (0,1)$ we have
  \[ \Dom_{[X,Y]_\theta}(A) = \bigl[\Dom_X(A),\Dom_Y(A)\bigr]_\theta.
  \]
\end{lemma}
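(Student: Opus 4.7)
The strategy is to exploit that $A$ is a bounded isomorphism on both endpoint pairs and then apply complex interpolation of bounded operators. Concretely, by the assumption that $A\colon \Dom_X(A) \to X$ is an isomorphism, the graph norm on $\Dom_X(A)$ is equivalent to $x \mapsto \lVert Ax\rVert_X$, as noted in the Notation subsection of the paper, and likewise for $\Dom_Y(A)$ with $Y$. First I would equip the domains with these equivalent norms, making $A$ an isometric isomorphism $\Dom_X(A) \to X$ and $\Dom_Y(A) \to Y$, with inverses $A^{-1}\colon X \to \Dom_X(A)$ and $A^{-1}\colon Y \to \Dom_Y(A)$.

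Next I would check that $(\Dom_X(A), \Dom_Y(A))$ forms a valid interpolation couple: for $x \in \Dom_Y(A)$ we have $x \in X$ and $Ax \in Y \embeds X$, so $x \in \Dom_X(A)$; the associated norm estimate follows from $Y \embeds X$, yielding the continuous embedding $\Dom_Y(A) \embeds \Dom_X(A)$. Applying complex interpolation of bounded linear operators then gives two bounded maps
\[
  A \colon \bigl[\Dom_X(A),\Dom_Y(A)\bigr]_\theta \to [X,Y]_\theta, \qquad
  A^{-1} \colon [X,Y]_\theta \to \bigl[\Dom_X(A),\Dom_Y(A)\bigr]_\theta.
\]
Since $A$ and $A^{-1}$ are mutual inverses on $\Dom_X(A)$ and $X$, respectively, and both interpolation spaces sit inside these, the two interpolated operators are still mutual inverses; hence $A$ is a topological isomorphism between $[\Dom_X(A),\Dom_Y(A)]_\theta$ and $[X,Y]_\theta$.

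Finally, to identify $\Dom_{[X,Y]_\theta}(A)$, recall from the notation in the paper that this set consists of all $x \in X$ with $Ax \in [X,Y]_\theta$; as $[X,Y]_\theta \embeds X$, such an $x$ automatically lies in $\Dom_X(A)$. Given $x \in [\Dom_X(A),\Dom_Y(A)]_\theta$, the isomorphism yields $Ax \in [X,Y]_\theta$, so $x \in \Dom_{[X,Y]_\theta}(A)$. Conversely, given $x \in \Dom_{[X,Y]_\theta}(A)$, set $f \coloneqq Ax \in [X,Y]_\theta$; then $A^{-1}f \in [\Dom_X(A),\Dom_Y(A)]_\theta$ by the interpolated inverse, and since $x \in \Dom_X(A)$ we have $A^{-1}f = x$. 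Both inclusions together give the claimed equality, with equivalent norms coming from the open mapping theorem applied to the interpolated isomorphism.

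I do not expect genuine obstacles here; the only point requiring care is the verification that $(\Dom_X(A),\Dom_Y(A))$ is a proper interpolation couple with $\Dom_Y(A) \embeds \Dom_X(A)$, and the clean reduction to an interpolation of isomorphisms via passing to the equivalent norms $\lVert A\cdot\rVert$. Once this is in place, the rest is a direct application of the functorial properties of complex interpolation.
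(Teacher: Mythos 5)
Your proposal is correct and follows essentially the same route as the paper: equip the domains with the equivalent norms $x \mapsto \lVert Ax\rVert$, interpolate $A$ and $A^{-1}$ between the endpoint couples, and read off the two inclusions (the paper even notes the resulting identity is an isometry in these norms). Your extra care in verifying that $\Dom_Y(A) \embeds \Dom_X(A)$ forms a valid couple and in unwinding the definition of $\Dom_{[X,Y]_\theta}(A)$ fills in details the paper leaves implicit, but introduces nothing different in substance.
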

\begin{proof}
  We equip $\Dom_Z(A)$ with the norm $x \mapsto \|Ax\|_Z$ for any
  occurring space $Z$. Then $A$ is an isometry between $\Dom_X(A)$ and
  $X$ and between $\Dom_Y(A)$ and $Y$. Thus, by the functorial
  property of interpolation, $A$ is nonexpansive, mapping
  $[\Dom_X(A),\Dom_Y(A)]_\theta \to [X,Y]_\theta$, i.e.,
  \begin{equation*}
    \|x\|_{\Dom_{[X,Y]_\theta}(A)} = \|Ax\|_{[X,Y]_\theta} \leq \|x\|_{[\Dom_X(A),\Dom_Y(A)]_\theta},
  \end{equation*}
  so
  \begin{equation*}
    [\Dom_X(A),\Dom_Y(A)]_\theta \embeds
    \Dom_{[X,Y]_\theta}(A).
  \end{equation*}
  Quite analogously, $A^{-1}$ is a contraction
  $[X,Y]_\theta \to [\Dom_X(A),\Dom_Y(A)]_\theta$, which shows
  that
  \begin{equation*}
    \Dom_{[X,Y]_\theta}(A) \embeds
    [\Dom_X(A),\Dom_Y(A)]_\theta.
  \end{equation*}
  The claim follows; in fact, the stated equality is even an isometry
  with the chosen norms.
\end{proof}
 

\begin{lemma}\label{lem::domain2}
  In addition to the assumptions of Lemma~\ref{lem::domain1}, assume
  that $A$ is densely defined, positive, and admits bounded imaginary
  powers, each both on $X$ and $Y$, respectively. Then for $\theta,\rho \in (0,1)$ there holds
  \[
    \left[[X,Y]_\theta, \Dom_{[X,Y]_\theta}(A)\right]_\rho =
    \Bigl[[X,\Dom_X(A)]_\rho, [Y,\Dom_Y(A)]_\rho\Bigr]_\theta.
  \]
\end{lemma}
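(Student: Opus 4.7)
The plan is to sandwich the claim between two applications of Lemma~\ref{lem::domain1}, connected by the reiteration formula for fractional powers of operators with bounded imaginary powers: for any positive, densely defined operator $B$ with BIP on a Banach space $Z$ one has $[Z,\Dom_Z(B)]_\rho = \Dom_Z(B^\rho)$ for $\rho \in (0,1)$, see~\cite[Theorem~1.15.3]{Triebel1995}.

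Concretely, I would first apply Lemma~\ref{lem::domain1} to $A$ to identify
\[
\Dom_{[X,Y]_\theta}(A) = \bigl[\Dom_X(A),\Dom_Y(A)\bigr]_\theta.
\]
Next, since $A$ admits BIP on both $X$ and $Y$ with compatible actions, a bilinear interpolation argument in the spirit of Theorem~\ref{thm::interpolationthm} applied to the family $\{A^{is}\}_{s\in\R}$ transfers the uniform operator-norm bound to $[X,Y]_\theta$; thus the part of $A$ in $[X,Y]_\theta$, with the domain identified above, is again positive, densely defined, and admits BIP. Applying Triebel's theorem to $A$ viewed on $X$, on $Y$, and on $[X,Y]_\theta$ respectively then yields
\[
[X,\Dom_X(A)]_\rho = \Dom_X(A^\rho), \qquad [Y,\Dom_Y(A)]_\rho = \Dom_Y(A^\rho),
\]
together with
\[
\bigl[[X,Y]_\theta,\Dom_{[X,Y]_\theta}(A)\bigr]_\rho = \Dom_{[X,Y]_\theta}(A^\rho).
\]

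Finally, since $A$ is an isomorphism on both $X$ and $Y$ and admits BIP there, the fractional power $A^\rho$ is again an isomorphism on both spaces (the inverse is $A^{-\rho}$, bounded via the functional calculus), so Lemma~\ref{lem::domain1} is applicable a second time, now to $A^\rho$, giving
\[
\Dom_{[X,Y]_\theta}(A^\rho) = \bigl[\Dom_X(A^\rho),\Dom_Y(A^\rho)\bigr]_\theta.
\]
Chaining the identities above produces the claimed equality. The only nontrivial point is justifying that BIP for $A$ is inherited by its realization in the interpolation space $[X,Y]_\theta$: this is standard once one observes that the operators $A^{is}$ on $X$ and on $Y$ coincide on the (dense) intersection of the respective domains, so bilinear interpolation of the two mappings $s \mapsto A^{is}$ delivers the required operator-norm bound on $[X,Y]_\theta$ uniformly on compact sets in $\R$.
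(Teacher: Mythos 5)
Your proposal is correct and follows essentially the same route as the paper: transfer BIP to the realization of $A$ in $[X,Y]_\theta$ by interpolation, use Triebel's reiteration theorem for fractional power domains to rewrite the left-hand side as $\Dom_{[X,Y]_\theta}(A^\rho)$, apply Lemma~\ref{lem::domain1} to the isomorphism $A^\rho$, and re-expand $\Dom_X(A^\rho)$, $\Dom_Y(A^\rho)$ via the same reiteration theorem. The only cosmetic difference is that ordinary (linear) interpolation of the compatible operators $A^{is}$ already suffices for the BIP transfer, so invoking a bilinear interpolation argument there is more than is needed.
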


\begin{proof} 
  By interpolation, $A$ has bounded imaginary powers on $[X,Y]_\theta$
  as well. With reiteration for fractional power domains as
  in~\cite[Theorem~1.15.3]{Triebel1995}, we thus conclude
  \[
    \bigl[[X,Y]_\theta, \Dom_{[X,Y]_\theta}(A)\bigr]_\rho =
    \Dom_{[X,Y]_\theta}(A^\rho).
  \]
  Now, according to~\cite[Theorem~1.15.2~(e)]{Triebel1995}, $A^\rho$
  is still an isomorphism $\Dom_{X}(A^\rho) \to X$ and
  $\Dom_{Y}(A^\rho) \to Y$, and hence fulfills the assumptions
  of the previous lemma. Therefore, we obtain
  \[
    \Dom_{[X,Y]_\theta}(A^\rho) = \bigl[\Dom_X(A^\rho),
    \Dom_Y(A^\rho)\bigr]_\theta.
  \]
  The claim follows from re-expanding the right-hand side
  via~\cite[Theorem~1.15.3]{Triebel1995}.
\end{proof}

\section{H\"older-regularity for the elliptic operator in \texorpdfstring{$L^p$}{Lp}}\label{sec:hold-regul-ellipt}

In relation to Definition~\ref{def::regdom} for the degree of
H\"older-regularity for solutions to the elliptic problem associated
to $-\nabla\cdot\mu\nabla + 1$ in $L^p$, we provide some examples for
explicit $\kappa$ in some rather nonsmooth cases:
 
\begin{example}\label{ex::convex-domain-regularity}
  Let $\Omega$ be a convex domain, let $\mu$ be Lipschitz-continuous,
  and let $\Gamma_D = \partial \Omega$ or $\Gamma_D =
  \emptyset$. From~\cite[Theorems 3.2.1.2\&3.2.1.3]{Grisvard1985} it
  is well known that
  \begin{align*}
    -\nabla \cdot \mu \nabla + 1\colon H^2 \cap \ker(B) \to L^2,
  \end{align*}  
  is a topological isomophism, with $B = \tr$ if
  $\Gamma_D = \partial \Omega$, or $B = \nu \cdot \mu \nabla$, the
  co-normal derivative on $\partial \Omega$ associated with $\mu$, if
  $\Gamma_D = \emptyset$. Hereby, we consider $\ker(B)$ as a subspaces
  of $W^{1,1}$ and $W^{2,1}$, respectively. Every convex domain is
  locally uniform~\cite[Proposition~3.8]{Triebel2008}. Consequently,
  there is a degree-independent Sobolev extension operator for
  $\Omega$~\cite{Rogers2006}, and the families
  $(L^q)_{q \in (1,\infty)}$ and
  $(W^{2,q} \mathrel\cap \ker(B) )_{q \in (1,\infty)}$ form
  interpolation scales such that
  \begin{align}\label{eq::W2qiso}
    -\nabla \cdot \mu \nabla + 1\colon W^{2,q} \cap \ker(B) \to L^q 
  \end{align}    
  is a bounded linear operator for every $q \in (1,\infty)$. It
  follows from Sneiberg's extrapolation theorem,
  see~\cite[Theorem~3.1]{Meinlschmidt2021}, that there is some
  $\bar q > 2$ such that~\eqref{eq::W2qiso} is still an isomorphism
  for $q \in [2,\bar q]$. Sobolev embeddings therefore imply that we
  can choose $\kappa > 1$ in dimension $d=2$ and
  $\kappa > \frac{1}{2}$ in dimension $d=3$.
\end{example}

The following example is concerned with polygonal domains in $\R^2$,
constant coefficients, and possibly mixed boundary conditions:

\begin{example}\label{ex::2d1}
  Let $\Omega \subset \R^2$ be a polygon with vertices $S_j$,
  $j=1,...,N$. By $\omega_j$ we denote the interior angle of $\Omega$
  at $S_j$. The type of boundary condition on each edge has to be
  fixed (homogeneous Dirichlet or Neumann), which is not a
  restriction, because ``artificial'' vertices with $\omega_j = \pi$
  are allowed. In order to keep the notation as simple as possible, we
  restrict ourselves to the Laplacian, i.e., $\mu = \id$, for this
  example.

  Given $g \in L^p$, let $f \in H^1_D$ denote the solution of
  $-\Delta f + f = g$. From~\cite[Theorem~4.4.3.7]{Grisvard1985} we infer that
  there is a regular part of the solution
  $f_0 \in W^{2,p}\cap W^{1,p}_D$ such that the remainder $f-f_0$ can
  be expressed in polar coordinates $(r_j,\theta_j)$ centered at $S_j$
  as follows:
  \begin{align}\label{eq::singular}
    (f-f_0)(r_j,\theta_j) = \sum_{1\leq j\leq N} \eta_j(r_j,\theta_j) \sum_{0 < \lambda_{j,m} < 2/p'} c_{j,m} r_j^{\lambda_{j,m}} s_{j,m}(\theta_j) 
  \end{align}
  Herein, $\eta_j$ denote smooth cut-off functions with their support
  near $S_j$, the $c_{j,m}$ are scalar coefficients, and the $s_{j,m}$ are smooth. The
  exponents $\lambda_{j,m}$ correspond to the eigenvalues
  of certain Sturm-Liouville eigenvalue problems, and can be
  determined analytically, cf.~\cite[p.220]{Grisvard1985}:
  \begin{align*}
    \lambda_{j,m} &= \frac{\pi m}{\omega_j} \qquad \text{for pure Dirichlet or Neumann boundary conditions around $S_j$}, \\
    \lambda_{j,m} &= \frac{\pi m}{2\omega_j} \qquad \text{if boundary conditions change at $S_j$.}
  \end{align*}
  Therefore, any $f \in \Dom_{L^p}(-\Delta + 1)$ consists of a
  $W^{2,p}$-part $f_0$, which is then in
  $C^{1,1-d/p}(\overline\Omega)$, and a singular part given
  by~\eqref{eq::singular}. From~\cite[Theorem~6.2.10]{Grisvard1985} we
  infer that this singular part is Hölder-continuous with its degree
  given by the minimal $\lambda_{j,m}$ appearing in the sum. Hence, we
  conclude
  \[
    \kappa = \min\left\{ 2-\frac{d}{p}, \min_{\overset{\omega_j >
          \pi}{ \textnormal{unif. BC's at
            $j$}}} \frac{\pi}{\omega_j}, \min_{\overset{\omega_j >
          \frac{\pi}{2}}{ \textnormal{mixed BC's at
            $j$}}} \frac{\pi}{2\omega_j} \right\}
  \]
  in this example.  Note that using~\cite[Theorem 5.2.7]{Grisvard1985}
  also non-constant, but Lipschitz-continuous $\mu$ can be treated in
  case of pure homogeneous Dirichlet boundary conditions.
\end{example}

Following~\cite{Petzoldt2001}, Example~\ref{ex::2d1} can be extended
to a \emph{scalar} coefficient function $\mu$ that is constant with
respect to a poylgonal partition of $\Omega$:

\begin{example}\label{ex::2d2}
  Let $\Omega \subset \R^2$ be a polygon and $\mu$ be a scalar
  function that is piecewise constant with respect to a partition of
  $\Omega$ into subdomains given by
  polygons. In~\cite[Theorem~2.27]{Nicaise1993} a decomposition of
  $f \in \Dom_{L^p}(-\nabla \cdot \mu \nabla + 1)$ into a
  $W^{2,p}$-part and a singular part, similar to the previous example,
  is obtained. If $\mu$ is non-constant, then singularities can occur
  at any vertex of the boundary and also along the discontinuities of $\mu$
  inside the domain. Again, the ``bad'' contributions to $f$
  arise from terms of type $r^\lambda s(\theta)$ (in polar
  coordinates) with $\lambda$ to be determined from certain
  Sturm-Liouville eigenvalue problems, cf.~\cite{Petzoldt2001}.  In
  order to have lower bounds on these eigenvalues depending only on
  the underlying geometry, but not on the actual values of $\mu$, we
  require the following restriction: For any $x \in \bar \Omega$ the
  number of different values of $\mu$ (``materials'') adjacent to $x$
  plus the number of boundary conditions at $x$ does not exceed
  3. Under this assumption the following eigenvalue estimates are
  provided in~\cite{Petzoldt2001}, see also~\cite[Section~8]{Costabel1999}:
  \begin{align*}
      \lambda_x &> \frac{\pi}{2\omega_x} & &x \in \partial \Omega, \text{ two materials, uniform boundary condition}, \\
      \lambda_x &> \frac{\pi}{2\omega_x} & &x \in \partial \Omega, \text{ one material, boundary condition changing}, \\
      \lambda_x &> \frac{1}{2} & &x \in \Omega, \text{ two adjacent materials}, \\
      \lambda_x &> \frac{1}{4} & &x \in \Omega, \text{ three
        adjacent materials}.
  \end{align*}
  Hereby, $\omega_x$ denotes the interior angle of $\Omega$ at the
  (possibly artificial) vertex $x \in \partial \Omega$. The remaining
  case, i.e., a point $x \in \partial \Omega$ with only one adjacent
  material and uniform boundary condition, has already been dealt with
  in Example~\ref{ex::2d1}. For the details and similar results under
  less strict assumptions we refer to~\cite{Petzoldt2001}. As in the
  previous example, the maximal value for $\kappa$ is determined by
  the minimal $\lambda_x$ for $x \in \overline \Omega$.
\end{example}

In three space dimensions the situation is more involved. For example,
in elliptic problems on polyhedral domains there are singularities
arising from both vertices and edges. We refer
to~\cite{Dauge1992,Dauge1999,Grisvard1995} for an overview. For a
constant coefficient function $\mu$ and pure homogeneous Dirichlet
boundary conditions, a decomposition into a regular and a singular
part similar to the one in Example~\ref{ex::2d1} is still possible,
see e.g. the exposition in the proof
of~\cite[Lemma~2.3]{Wollner2012}. The resulting degree of
Hölder-continuity, however, depends on the eigenvalues of the
Laplace-Beltrami operator on certain spherical polygons and seems to
be difficult to determine. We thus do not go into further details.





\section*{Acknowledgements}

The authors thank J.~Rehberg (WIAS, Berlin) for initiating early work on
this project, encouragement, and helpful discussions.

\printbibliography

\end{document}